\definecolor{unbleu}{rgb}{0.03, 0.15, 0.4}
\definecolor{mygreen}{rgb}{0.0,.5,0.0}
\definecolor{britishracinggreen}{rgb}{0.0, 0.26, 0.15}
\definecolor{myblue}{rgb}{0,.2,.8}
\definecolor{myotherblue}{rgb}{0,0.4,.75}
\definecolor{applegreen}{rgb}{0.55, 0.71, 0.0}
\definecolor{monrouge}{rgb}{0.8, 0.0, 0.0} 
\definecolor{cadmiumgreen}{rgb}{0.0, 0.42, 0.24}
\definecolor{black}{rgb}{0.0, 0.0, 0.0}
\definecolor{sepia}{rgb}{0.44, 0.26, 0.08}
\definecolor{teagreen}{rgb}{0.82, 0.94, 0.75}
\definecolor{yellow-green}{rgb}{0.6, 0.8, 0.2}
\definecolor{azure(colorwheel)}{rgb}{0.0, 0.5, 1.0}
\definecolor{awesome}{rgb}{1.0, 0.13, 0.32}
\definecolor{cadmiumyellow}{rgb}{1.0, 0.96, 0.0}
\definecolor{royalblue(traditional)}{rgb}{0.93, 0.57, 0.13}
\definecolor{green-yellow}{rgb}{0.68, 1.0, 0.18}
\definecolor{huntergreen}{rgb}{0.21, 0.37, 0.23}
\newtheorem{thm}{Theorem}[section]
\newtheorem{lem}[thm]{Lemma}
\newtheorem{prop}[thm]{Proposition}
\newtheorem*{thm*}{Theorem}
\newtheorem{corollary}[thm]{Corollary}
\newtheorem*{thma}{Theorem A}
\newtheorem*{thmb}{Theorem B}
\newtheorem*{thmbprime}{Theorem B'}
\theoremstyle{definition}
\newtheorem{defn}[thm]{Definition}
\newtheorem{rem}[thm]{Remark}
\newcommand{\R}{\mathbb R}
\newcommand{\Z}{\mathbb Z}
\newcommand{\N}{\mathbb N}
\newcommand{\cL}{{\mathscr L}}
\newcommand{\cQ}{\mathcal Q}
\newcommand{\cF}{\mathcal F}
\newcommand{\cP}{\mathcal P}
\newcommand{\cA}{\mathcal A}
\newcommand{\cB}{\mathcal B}
\newcommand{\cT}{\mathcal T}
\newcommand{\bi}{{\boldsymbol{i}}}
\newcommand{\bn}{{\boldsymbol{n}}}
\newcommand{\bm}{{\boldsymbol{m}}}
\newcommand{\bq}{{\boldsymbol{q}}}
\newcommand{\bj}{{\boldsymbol{j}}}
\newcommand{\bk}{{\boldsymbol{k}}}
\newcommand{\bv}{{\boldsymbol{v}}}
\newcommand{\bu}{{\boldsymbol{u}}}
\newcommand{\comp}{\mathrm{c}} 
\newcommand{\Zd}{{\mathbb Z^d}}
\newcommand{\ES}{\mathrm{ES}}
\newcommand{\MM}{\mathrm{MM}}
\newcommand{\ZTA}{\mathrm{ZTA}}
\newcommand{\sint}{\scaleto{\int}{22pt}} 
\newcommand{\betac}{\beta_{\mathrm{c}}}
\newcommand{\ergbox}[1]{\Lambda^{\!\scaleobj{.8}{+}}_{#1}}
\newcommand{\statbox}[1]{\Lambda_{#1}}
\newcommand{\dbar}{ \overline{\mathrm{d}}}
\newcommand{\dd}{{\,\mathrm d}} 
\newcommand{\widesim}{\scaleobj{1.3}{\sim}}
\newcommand{\topo}{\scaleobj{0.8}{\rm top}}
\newcommand{\stwo}{{\scaleto{2}{3pt}}}
\DeclareMathAlphabet{\dutchcal}{U}{dutchcal}{m}{n}
\SetMathAlphabet{\dutchcal}{bold}{U}{dutchcal}{b}{n}
\DeclareMathAlphabet{\dutchbcal} {U}{dutchcal}{b}{n}
\DeclareMathOperator{\ent}{\scaleobj{1.15}{\dutchcal{h}}} 
\newcommand{\Ent}{H}
\let\phi=\varphi
\DeclareMathOperator{\var}{var}
\DeclareMathOperator{\card}{card}
\DeclareMathOperator{\lexmin}{lex\,min}
\DeclareMathOperator{\dist}{dist}
\DeclareMathOperator{\e}{\mathrm{e}}
\DeclareMathOperator{\1}{\mathds{1}}
\newcommand{\uset}[3][0ex]{%
  \mathrel{\mathop{#2}\limits_{
    \vbox to#1{\kern-6\ex@
    \hbox{$\scriptstyle#3$}\vss}}}}
\begin{document}

\title{Freezing Phase Transitions for Lattice Systems and Higher-Dimensional Subshifts}

\author[1]{J.-R. Chazottes
\thanks{Email: \texttt{jeanrene@cpht.polytechnique.fr}}
}

\author[2]{T. Kucherenko
\thanks{Email: \texttt{tkucherenko@ccny.cuny.edu}}
}

\author[3]{A. Quas
\thanks{Email: \texttt{aquas@uvic.ca}}
}

\affil[1]{{\small CPHT, CNRS, \'Ecole polytechnique, Institut Polytechnique de Paris, 91120 Palaiseau, France}}
\affil[2]{{\small Department of Mathematics, The City College of New York, New York, NY, 10031, USA}}
\affil[3]{{\small Department of Mathematics \& Statistics, University of Victoria, Victoria BC, Canada v8W 2Y2}}

\date{Dated: \today}

\maketitle

\begin{abstract} 
Let $X = \cA^{\Z^d}$, where $d \geq 1$ and $\cA$ is a finite set, equipped with the action of the shift map.
For a given continuous potential $\phi: \cA^{\Z^d} \to \R$ and $\beta>0$ (``inverse temperature''), there exists a (nonempty) set of equilibrium states $\ES(\beta\phi)$. The potential $\phi$ is said to exhibit a ``freezing phase transition'' if $\ES(\beta\phi) = \ES(\beta'\phi)$ for all $\beta, \beta' > \betac$, while $\ES(\beta\phi) \neq \ES(\beta'\phi)$ for any $\beta < \betac < \beta'$, where $\betac\in (0,\infty)$ is a critical inverse temperature depending on $\phi$. 

In this paper, given any proper subshift $X_0$ of $X$, we explicitly construct
a continuous potential $\phi: X \to \R$ for which there exists 
$\betac \in (0,\infty)$ such that $\ES(\beta\phi)$ coincides with the 
set of measures of maximal entropy on $X_0$ for all $\beta > \betac$, whereas 
for all $\beta < \betac$, $\mu(X_0)=0$ for all $\mu\in\ES(\beta\phi)$. 
This phenomenon was previously studied only for $d = 1$ in the context of dynamical 
systems and for restricted classes of subshifts, with significant motivation stemming 
from quasicrystal models.  
Additionally, we prove that under a natural summability condition -- satisfied, 
for instance, by finite-range potentials or exponentially decaying potentials -- 
freezing phase transitions are impossible.  

\bigskip

\noindent {\footnotesize{\bf Keywords and phrases}: equilibrium states, Gibbs states, measures of maximal 
entropy, maximizing measures, ground states, phase transitions, multidimensional symbolic dynamics,
substitution subshifts, subshifts of finite type, aperiodic tilings, models of quasicrystals.}

\end{abstract}

\tableofcontents

\section{Introduction}

The connections between statistical physics and dynamical systems have long been 
fruitful, with ideas initially flowing from physics into the study of dynamical systems. 
The foundational developments in ergodic theory and chaotic dynamics were deeply 
influenced by statistical mechanics, particularly through the notion of Gibbs states. 
These measures, introduced by the pioneering work of Boltzmann and Gibbs, became a 
crucial tool in understanding long-term statistical behavior in dynamical systems, 
notably in the study of hyperbolic systems and Anosov flows. The development of 
thermodynamic formalism, initiated by Sinai, Ruelle, and Bowen, brought a statistical 
physics perspective to dynamical systems, using concepts such as entropy, pressure, 
Gibbs and equilibrium states to describe the statistical properties of chaotic 
systems \cite{CK,Chernov}.

However, in recent years, this flow of ideas has started to reverse in certain contexts. 
Techniques originating in ergodic theory and symbolic dynamics -- historically developed 
for understanding dynamical systems -- are now being applied back to lattice models in 
statistical physics.
Lattice models have played a pivotal role in the study of phase transitions, providing a 
mathematical framework to understand how systems change between distinct states of 
matter (e.g., from a disordered phase to an ordered phase). These models are defined on 
a discrete lattice ({\em e.g.}, $\Z^d$) and describe interactions between variables 
({\em e.g.}, spins, particles, or other degrees of freedom) located at the lattice 
sites; see for instance \cite{FV,Georgii}. 

Recent studies on low-temperature limits and ground states of lattice models have 
increasingly leveraged techniques from symbolic dynamics \cite{BBDT, CGU, CH, CS, CRL, GST}. In this paper, we focus on 
freezing phase transitions, where the set of equilibrium states can collapse at low 
temperatures -- not just at zero temperature. While our results on freezing phase 
transitions are abstract, they were motivated by understanding how equilibrium states 
collapse in a way that they become concentrated on subshifts without any periodic 
configuration, which serve as models for quasicrystals in the context of lattice systems. 

Despite the close connections between statistical physics and dynamical systems through 
thermodynamic formalism \cite{Ruelle}, these two fields remain largely distinct communities. This 
separation arises not only from differences in the types of questions they address but 
also from fundamental differences in their core objects and frameworks. In statistical 
physics, the primary object is an interaction, sometimes called an ``interaction potential'', which is a family of functions 
indexed by finite subsets of $\mathbb{Z}^d$. This object describes how components of a 
system interact locally and provides a natural, flexible foundation for building models 
from the ground up. In contrast, in dynamical systems, the central object is a function 
on the phase space -- often called a potential as well.

Schematically (we will revisit this point in more detail later), interactions 
are used to define Gibbs states as well as equilibrium states\footnote{\, In the context of
statistical physics, one uses the term ``state'' to mean a probability measure, especially for
Gibbs and equilibrium measures which describe the states of the system.}. The construction of 
these states relies on an averaged version of the interactions over finite subsets of $\Z^d$
containing the origin. These averaged functions correspond to 
the potentials used in dynamical systems, where the natural objects are equilibrium 
states rather than Gibbs states (notably because the former can be defined in a very general setting). The complication arises from the fact that the 
mapping that transforms an interaction into a potential lacks good properties. 
As a result, it is generally not 
possible to automatically translate results obtained in the context of dynamical systems 
to apply them directly to interactions. This effect will show up in this 
article: although our results are stated in the spirit of dynamical systems, we will 
show how to adapt and translate them into the framework of statistical physics.

\subsection{Equilibrium states, temperature going to \texorpdfstring{$0$}{} and maximizing measures}
Consider the lattice $\Z^d$ and a space of configurations $X$ of the form
$\cA^{\Zd}$ where $\cA$ is a finite set (often called an alphabet in the context of 
symbolic dynamics). On $X$ we consider the action of the shift map. 
A potential $\phi$ is a real-valued function on $X$.
Given $\beta\in \R$, the equilibrium states for $\beta\phi$ are the shift-invariant probability measures  
which maximize the functional 
\[
\nu\mapsto \ent(\nu) +\beta \int \phi\dd\nu,
\]
where $\ent(\nu)$ is the entropy of $\nu$. This is the variational principle.
We also define the so-called pressure function 
\begin{equation}
\label{def:pressure-function}
\beta\mapsto p_\phi(\beta):=\sup_{\nu}\Big\{\ent(\nu)+\beta \sint \phi\dd\nu\Big\},
\end{equation}
where the supremum is taken over shift-invariant (probability)\footnote{\, Unless otherwise specified, all measures in this paper are probability measures.}
measures. 
The pressure function is convex and in particular continuous.
(For this material, we refer to \cite{Keller}.)

The parameter $\beta$ is interpreted as the inverse temperature in the context
of statistical physics so, physically, $\beta\geq 0$.
For $\beta=0$ (infinite temperature), there is only one equilibrium state 
which is the measure of maximal entropy on $\cA^{\Zd}$, that is, the 
product measure whose entropy is equal $\log |\cA|$. This is the most disorderly phase possible, where $\phi$ plays no role.

The other extreme is the limit $\beta\to+\infty$ (the zero-temperature limit) which is associated with ground states and is considerably more subtle. It has only begun to be the subject of systematic research in the last 
fifteen years, and revealed a number of surprises, see \cite{BBDT, CGU, CH, CS, CRL, GST} and references therein. 
A maximizing measure for $\phi$ is a shift-invariant measure 
$\mu$ such that $\int \phi \dd\mu\geq \int \phi \dd\nu$ for all shift-invariant measures $\nu$.
The term ``maximizing measure'' originates from the field of ergodic optimization. In statistical physics, one would instead use the term ``ground state''. However, due to the lack of consensus on the precise definition of ground states, we adopt the terminology from dynamical systems here.
For a comprehensive survey on ergodic optimization in the context of a continuous map acting on a compact metric space --
corresponding to a $\Z$-action when the map is invertible -- we refer the reader to \cite{Jenkinson}. While some general results 
extend naturally to $\Z^d$-actions, others do not.
If $\phi$ is not cohomologous to a constant and satisfies a natural uniform summability condition,
the support of any of its maximizing measures is contained in a proper 
subshift of $\cA^{\Zd}$, that is, a closed and shift-invariant subset of $\cA^{\Zd}$.
This subshift can be characterized in terms of maximizing configurations which are configurations 
whose energy cannot be increased by finite modifications \cite{GT}, 
so we call it the maximizing subshift for $\phi$. 

Now, if we take an arbitrary sequence $(\beta_n)_{n\geq 1}$ of inverse temperatures such that 
$\beta_n\to+\infty$, and consider a sequence $(\mu_n)_{n\geq 1}$ where, 
for each $n$, $\mu_n$ is an  equilibrium state for $\beta_n\phi$, then any accumulation 
point of  $(\mu_n)_{n\geq 1}$ is a maximizing measure and it maximizes entropy among 
the possible maximizing measures. 
There are two special cases where the zero-temperature limit 
does exist, namely if the maximizing subshift is uniquely ergodic (i.e., carries a single shift-invariant measure), 
or if it has a unique measure of maximal entropy. The existence of the zero-temperature limit 
turns out to depend both on the regularity of $\phi$ and $d$ (the dimension of the lattice).
For $d=1$, the limit always exists for locally constant potentials (corresponding to 
finite-range interactions), but may fail to exist for Lipschitz potentials 
(i.e., exponentially decaying pair interactions). For $d\geq 2$,  the limit generally fails to exist even for locally constant potentials.

The pressure function, in any dimension and for any continuous $\phi$, exhibits a slant asymptote, that is, 
\begin{equation}\label{slant-asymptote}
\lim_{\beta\to+\infty} \big(\, p_\phi(\beta)-(s_\phi\beta +\ent_\phi)\big)=0, 
\end{equation}
where 
\begin{equation}\label{def-a-b}
s_\phi=\sup_\nu \int \phi\dd\nu\quad\mathrm{and}\quad
\ent_\phi=\sup\Big\{\ent(\eta):\sint \phi\dd\eta=s_\phi\Big\},
\end{equation}
and the supremum defining $s_\phi$ is taken over shift-invariant measures.
(For completeness, we provide a proof of this fact in Section \ref{appendix-folklore}.)

\subsection{Subshifts and quasicrystals}
We can approach the problem in reverse: starting with a subshift that models either a 
crystal or a quasicrystal, we can investigate how to construct an interaction for which this subshift will serve as the maximizing one. Here a crystal is a 
subshift consisting of finitely many configurations, each being a shift of another. 
A quasicrystal, by contrast, is a uniquely ergodic, zero-entropy subshift with 
infinitely many configurations, and thus no periodic  configurations.
We emphasize that there is no universally accepted definition of a quasicrystal, even within the framework of lattice models.
One could for instance refine the previous definition by additionally requiring the subshift to be minimal in the ergodic-theoretic sense, which informally means that all configurations share the same patterns. On the other hand, one could simply require a subshift that contains uncountably many configurations, with none of them being periodic.

There is a striking difference between $d=1$ and $d\geq 2$. To model a quasicrystal for $d\geq 2$, 
one can consider subshifts of finite type without periodic configurations, which correspond to 
aperiodic Wang tilings (we will mention examples and references below). 
For $d=1$, however, subshifts of finite type are either reduced to finitely many 
periodic configurations (crystal), or are of positive topological entropy which is 
interpreted as chaos/disorder.
Consequently, in dimension 1, natural zero-entropy uniquely ergodic subshifts such 
as the Thue-Morse substitution have been commonly used, and infinite-range
interactions are required \cite{GMRE}.
For further discussions and background, we refer the reader to \cite{vE,vEM}.

\subsection{Freezing at non-zero temperature.}
For $0 < \beta < \infty$, the variational principle described above indicates that equilibrium states exhibit a balance between ``order'' and ``disorder'', with the potential $\phi$ acting as the driving force behind ordering. However, phase transitions can still occur. 
From this perspective, we can reconsider the previous discussion by examining the behavior at nonzero temperature and addressing the following
questions:
\begin{quote}
Given a subshift modeling, for example, a quasicrystal, can we construct a function $\phi$ whose equilibrium states coincide, for all $\beta>\betac$, with the unique invariant measure supported by this subshift? Is it possible that for $\beta < \beta_c$, none of the equilibrium states for $\phi$ are supported on the given subshift?
\end{quote}
For a more general subshift, one might wish for the equilibrium states to correspond, for all $\beta>\betac$, to the measures of 
maximal entropy associated with this subshift.

If this scenario occurs, the phase transition corresponds to an abrupt contraction of the support of the equilibrium states, which become concentrated on a significantly smaller set of maximizing configurations for the given potential.
This type of phase transition differs from the 
classical phase transitions typically encountered in statistical physics 
\cite{FV,Georgii}.

In this article, given any proper subshift $X_0$ of $X=\cA^{\Zd}$, we are able to construct a continuous potential, with controlled modulus of continuity, that ``freezes'' on $X_0$. Specifically, for $\beta > \betac$, the equilibrium states coincide with the measures of maximal entropy supported on $X_0$, whereas for $\beta < \betac$, none of the equilibrium states for the given potential are supported on $X_0$. 
Additionally, we establish a no-go theorem, demonstrating that potentials satisfying a classical summability condition cannot exhibit such phase transitions. 

\subsection{Statistical physics}
Recall that $X=\cA^{\Zd}$. The primary objects here are interactions. 
An interaction $\Phi$ is a family of functions $(\Phi_\Lambda)_{\Lambda\Subset \Zd}$, 
where for each (finite) subset $\Lambda$ of $\Zd$, $\Phi_\Lambda:X\to\R$ depends
only on the configurations restricted to $\Lambda$. 
Interactions are the building blocks of Gibbs states that we will not describe
here (we refer to \cite{FV,Georgii,Ruelle}).
For a shift-invariant interaction, one can construct a function $\phi:\cA^{\Zd}\to\R$ 
given by 
\[
\phi=\sum_{\substack{\Lambda\Subset \Z^d \\ 0\in \Lambda}}\frac{\Phi_\Lambda}{|\Lambda|},
\]
where $A\Subset B$ means $A$ is a finite subset of $B$.
This function appears in the definition of equilibrium states (variational principle) \cite{Georgii,Ruelle}.
It is commonly referred to as a potential in dynamical systems theory, and is
continuous if $\sum_{\Lambda\Subset \Z^d,\, 0\in \Lambda} |\Lambda|^{-1}\|\Phi_\Lambda\|_\infty<+\infty$.

A fundamental question  is to understand the structure of the set of Gibbs and 
equilibrium states for the interaction 
$\beta \Phi$ where $\beta\geq 0$. Let us emphasize 
that equilibrium states coincide with shift-invariant Gibbs states
for sufficiently  regular interactions, the natural condition being
absolute summability, that is, 
$\sum_{\Lambda\Subset \Z^d,\, 0\in \Lambda} \|\Phi_\Lambda\|_\infty<+\infty$; see \cite[Theorem 15.30, p. 322]{Georgii} or \cite[Theorem 4.2, p. 58]{Ruelle}.
In the absence of uniqueness, there exist examples of non-shift-invariant Gibbs states, which therefore cannot be equilibrium states, as well as equilibrium states that are not Gibbs states. However, when the Gibbs state is unique, it must be shift-invariant and coincides with the unique equilibrium state.

Notice that sign conventions used in dynamical systems are different from those used in statistical physics, basically because physicists prefer to minimize energy. 

\section{Main results} \label{sec:main-results}

\subsection{Freezing on any proper subshift and a no-go result}

We first define what we precisely mean by a {\em freezing phase transition}. (We will mention several concepts that will be precisely defined in the next section.) 
Let $X=\cA^{\Zd}$ be a full $d$-dimensional shift on a finite alphabet $\cA$.
For $\beta\in\R$ and a continuous potential $\phi:X\to\R$, we define the following three subsets of shift-invariant measures. The set of equilibrium states for $\beta\phi$ is
\[
\ES(\beta\phi)=\Big\{\mu\;\text{shift-invariant}:\; \ent(\mu)+\beta\sint \phi\dd\mu=p_\phi(\beta)\Big\},
\]
where $p_\phi(\beta)$ is defined in \eqref{def:pressure-function}. Then, the set of maximizing measures for
$\phi$ is  
\[
\MM(\phi)=\Big\{\mu\;\text{shift-invariant}:\sint \phi\dd\mu=s_\phi\Big\},
\]
where $s_\phi$ is defined in \eqref{def-a-b}. Finally, define the set of zero-temperature accumulation points as
\[
\ZTA(\phi)=\big\{\mu\colon\exists \beta_n\to\infty,\ \mu_n\in \ES(\beta_n\phi)\text{ with } \mu_n\leadsto\mu\big\},
\]
where $\mu_n\leadsto\mu$ means that $\mu_n$ converges to $\mu$ for the vague topology. A well-known folk theorem states that $\ZTA(\phi) \subseteq \MM(\phi)$, with the measures in $\ZTA(\phi)$  being among those that maximize entropy among all maximizing measures (see Theorem \ref{thm:folklore} in Section
\ref{appendix-folklore}).

\begin{defn}[Freezing phase transition for a potential]\label{def:FPT}
Let $\phi:X\to\R$ be a continuous function.
We will say that $\phi$ has a freezing phase transition at $\betac$ if 
$\ES(\beta\phi)=\ES(\beta'\phi)$ for all $\beta,\beta'>\betac$,
while $\ES(\beta\phi)\ne \ES(\beta'\phi)$ for any $\beta<\betac<\beta'$.
\end{defn}
\begin{rem}\label{rem:ESdisjoint}
By the following proposition, which characterizes freezing phase transitions, it follows that imposing $\ES(\beta\phi) \ne \ES(\beta'\phi)$ for any $\beta < \beta_c < \beta'$  necessarily implies that $\ES(\beta\phi) \cap \ES(\beta'\phi) = \emptyset$ for all
$\beta < \beta_c < \beta'$. Also, if 
$\ES(\beta\phi)=\ES(\beta'\phi)$ for all $\beta,\beta'>\betac$, then $\ES(\beta\phi) = \ZTA(\phi)$ for all $\beta > \beta_c$.
However, we prefer to keep the definition as it is.
\end{rem}

The previous definition can be rephrased in terms of the pressure function's affine behavior to the right of $\betac$, as follows (see \eqref{def:pressure-function} for the definition).
The proof is given in Section \ref{appendix-folklore}.

\begin{prop}\label{prop:freezing=slant-asymptote}
Let $X=\cA^{\Zd}$ be a full $d$-dimensional shift on a finite alphabet $\cA$, $\phi:X\to\R$ be a continuous function, and $\betac\in\R$.  
Then, $\phi$ has a freezing phase transition at $\betac$ if and only if the pressure function is affine on $[\,\betac,\infty)$ and not on any larger interval. Moreover, in this case $p_\phi(\beta)=s_\phi \beta+\ent_\phi$, where $s_\phi,\ent_\phi$ are defined in \eqref{def-a-b}.
\end{prop}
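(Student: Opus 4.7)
The plan is to use the standard duality between affine pieces of the convex pressure function and the structure of its equilibrium states. The convexity of $\beta\mapsto p_\phi(\beta)$ follows immediately from definition \eqref{def:pressure-function} as a supremum of affine functions, and each equilibrium state $\mu\in\ES(\beta\phi)$ provides an affine support function $\beta'\mapsto \ent(\mu)+\beta'\int\phi\dd\mu$ that touches $p_\phi$ at $\beta$ and lies below it everywhere.

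For the \emph{only if} direction, I would pick any $\beta_0>\betac$ and any $\mu\in\ES(\beta_0\phi)$ (which is nonempty by upper semicontinuity of entropy on $\cA^{\Zd}$). Since $\ES(\beta\phi)=\ES(\beta_0\phi)$ for all $\beta>\betac$, the measure $\mu$ is an equilibrium state at every such $\beta$, so
\[
p_\phi(\beta)=\ent(\mu)+\beta\sint\phi\dd\mu\quad\text{for all }\beta>\betac,
\]
and by continuity of $p_\phi$ this extends to $[\betac,\infty)$. To identify the coefficients, I would invoke the slant asymptote \eqref{slant-asymptote}: since $p_\phi(\beta)-(s_\phi\beta+\ent_\phi)\to 0$, two distinct affine functions cannot both be asymptotic to $p_\phi$, forcing $\int\phi\dd\mu=s_\phi$ and $\ent(\mu)=\ent_\phi$. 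To rule out affineness on any larger interval $[\beta_*,\infty)$ with $\beta_*<\betac$, I would use the following step in the other direction.

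For the \emph{if} direction, assume $p_\phi$ is affine on $[\betac,\infty)$ with $p_\phi(\beta)=s_\phi\beta+\ent_\phi$ and not on any strictly larger interval. First, for $\beta,\beta'>\betac$ and any $\mu\in\ES(\beta\phi)$, the support function touches $p_\phi$ at $\beta$ and lies below it; being below an affine function of slope $s_\phi$ while touching it at an interior point forces $\int\phi\dd\mu=s_\phi$ and then $\ent(\mu)=\ent_\phi$. Hence $\mu\in\ES(\beta'\phi)$ as well, giving $\ES(\beta\phi)=\ES(\beta'\phi)$. Second, for $\beta<\betac<\beta'$, if $\ES(\beta\phi)=\ES(\beta'\phi)$ then (since both are nonempty) there exists $\mu$ lying in both, and subtracting the two identities $\ent(\mu)+\beta\int\phi\dd\mu=p_\phi(\beta)$ and $\ent(\mu)+\beta'\int\phi\dd\mu=p_\phi(\beta')$ shows that the affine support function of $\mu$ agrees with $p_\phi$ at $\beta$ and $\beta'$; by convexity the two coincide on $[\beta,\beta']$, and together with the known affine piece on $[\betac,\infty)$ this forces $p_\phi$ to be affine on $[\beta,\infty)\supsetneq[\betac,\infty)$, contradicting maximality.

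The main conceptual obstacle is cleanly packaging the convex-analytic statement that, in the interior of an affine piece of $p_\phi$ with slope $s$, every equilibrium state $\mu$ is pinned to satisfy $\int\phi\dd\mu=s$ (subdifferential argument) and therefore has the same entropy. A small technical point is the handling of the endpoint $\betac$ itself (where $p_\phi$ need not be differentiable), but this is not needed for the characterization: only interior points of $(\betac,\infty)$ matter for the equality of equilibrium-state sets, and the identification $p_\phi=s_\phi\beta+\ent_\phi$ on the whole closed half-line $[\betac,\infty)$ comes for free from continuity of $p_\phi$ and the slant asymptote \eqref{slant-asymptote}.
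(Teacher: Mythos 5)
Your proposal is correct and follows essentially the same route as the paper: the paper packages the key convex-analytic step (a common equilibrium state for two parameters forces affineness of $p_\phi$ between them, and conversely affineness pins every equilibrium state at an interior parameter to the slope of the affine piece) into an auxiliary proposition on finite intervals $[\beta_1,\beta_2]$ and then applies it, while you run the same support-line/subdifferential argument directly on the half-line. The identification $p_\phi(\beta)=s_\phi\beta+\ent_\phi$ via the slant asymptote is also exactly the paper's final step.
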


According to the above proposition, if $\phi$ has a freezing phase transition, the pressure function $p_\phi(\beta)$ would exhibit a graph similar to the one shown in Figure \ref{fig:fpt}. 

\begin{figure}[htb!]
\centering
\includegraphics[scale=.25]{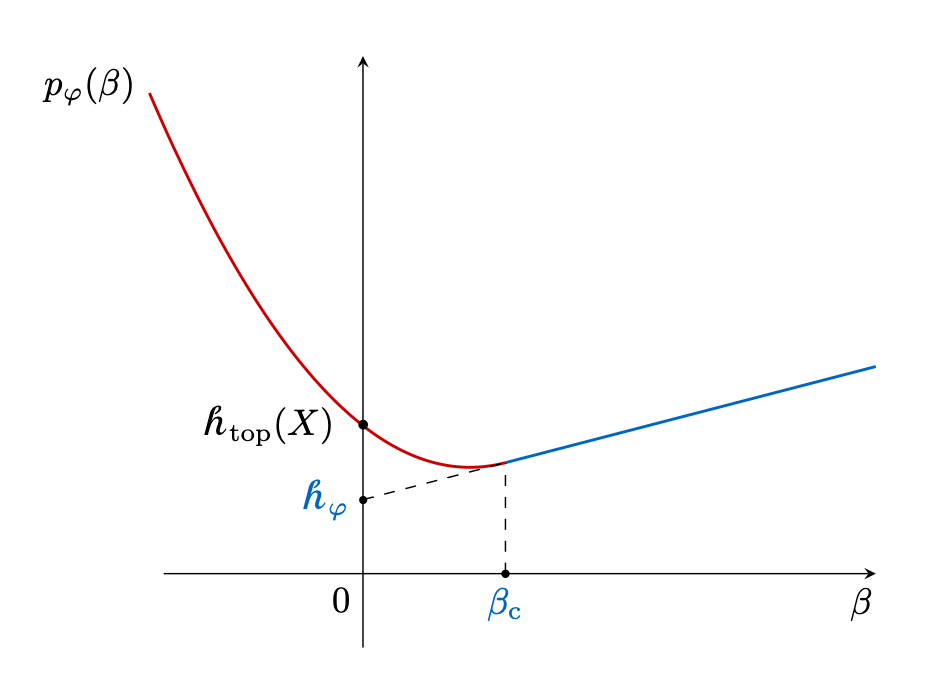}
\caption{Freezing phase transition at $\betac$.}
\label{fig:fpt}
\end{figure}
 
We now address our main question: How can we construct a continuous function $\phi$ that {\em freezes on a given subshift} $X_0$ of $X$? We make the following definition.

\begin{defn}[Freezing on a subshift]
Let $X_0$ be a proper subshift of $X$. 
A continuous potential $\phi\colon X \to \R$ is said to \emph{freeze on $X_0$} if it
has a freezing phase transition at some parameter $\beta_c$ and $X_0$ is the smallest subshift (with respect to the inclusion) which contains the supports of all measures in $\ZTA(\phi)$.
\end{defn}

Our main result is the following.

\begin{thma}\label{thm:A}
For any proper subshift $X_0$ of a full $d$-dimensional shift $X$ on a finite alphabet,
there is an explicit potential $\phi$ with a freezing phase transition at some $\beta_c>0$
such that for $\beta>\beta_c$, $\ES(\beta\phi)$ is the set of measures of maximal entropy on $X_0$
(and coincides with $\ZTA(\phi)$), while for $\beta<\beta_c$, $\mu(X_0)=0$ for all $\mu\in\ES(\beta\phi)$.
In particular, if the smallest subshift containing the supports of all measures of
maximal entropy on $X_0$ is $X_0$ itself, then $\phi$ freezes on $X_0$ at $\beta_c$.
\end{thma}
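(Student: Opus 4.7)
The plan is to construct $\phi$ as a weighted sum of indicators of locally forbidden patterns at all scales, penalizing any deviation from $X_0$. Since $X_0\subsetneq X$, there is a smallest $n_0$ with $\cL_{n_0}:=\cL_{n_0}(X_0)\subsetneq\cA^{B_{n_0}}$, where $B_n=\{-n,\dots,n\}^d$ and $\cL_n(X_0)$ denotes the set of $B_n$-patterns appearing in $X_0$; note that $\cL_n\subsetneq \cA^{B_n}$ for every $n\geq n_0$. Picking any positive weights $(c_n)_{n\geq n_0}$ with $\sum_n c_n<\infty$ (say $c_n=2^{-n}$), I define
\[
\phi(x)\;=\;-\sum_{n\geq n_0}c_n\,\mathbf{1}\bigl[x|_{B_n}\notin\cL_n\bigr].
\]
Each summand is locally constant, hence continuous, and uniform convergence yields $\phi\in C(X)$. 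Writing $p_n(\mu):=\mu[x|_{B_n}\notin\cL_n]$, one has $\sint\phi\dd\mu=-\sum_n c_n p_n(\mu)\leq 0$, with equality iff $p_n(\mu)=0$ for all $n$; by shift-invariance and a countable intersection, this holds iff $\mu(X_0)=1$. Thus $s_\phi=0$, the maximizers are exactly the shift-invariant measures on $X_0$, and $\ent_\phi=h_0:=h_\topo(X_0)$ is attained precisely on the measures of maximal entropy of $X_0$.

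The crux is to prove that
\[
\betac\;:=\;\sup\Bigl\{\tfrac{\ent(\mu)-h_0}{\sum_n c_n p_n(\mu)}:\mu\text{ shift-invariant},\ \ent(\mu)>h_0\Bigr\}
\]
is finite. If not, take $(\mu_k)$ along which this ratio blows up; since $\ent(\mu_k)-h_0\in(0,\log|\cA|-h_0]$, after extraction $\ent(\mu_k)\to h_\infty\geq h_0+\delta$ for some $\delta>0$, while $\sum_n c_n p_n(\mu_k)\to 0$ and hence $p_n(\mu_k)\to 0$ for every $n$. Any weak-$*$ subsequential limit $\mu_\infty$ satisfies $p_n(\mu_\infty)=0$ for all $n$ (the indicators are continuous), and shift-invariance together with a countable intersection give $\mu_\infty(X_0)=1$, whence $\ent(\mu_\infty)\leq h_0$. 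This contradicts upper semi-continuity of entropy on $\cA^{\Zd}$ --- being the infimum over $n$ of the weak-$*$-continuous functionals $\mu\mapsto|B_n|^{-1}\Ent_\mu(\alpha_{B_n})$ --- which forces $\ent(\mu_\infty)\geq\limsup\ent(\mu_k)\geq h_0+\delta$.

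With $\betac<\infty$, the inequality $\ent(\mu)+\beta\sint\phi\dd\mu\leq h_0$ holds for every shift-invariant $\mu$ whenever $\beta\geq\betac$, with equality iff $\sint\phi\dd\mu=0$ and $\ent(\mu)=h_0$, i.e.\ iff $\mu$ is a measure of maximal entropy on $X_0$. Thus $p_\phi(\beta)=h_0$ for $\beta\geq\betac$, while $\ES(\beta\phi)$ is precisely the set of MMEs on $X_0$ and, being constant in $\beta$ on $(\betac,\infty)$, coincides with $\ZTA(\phi)$. For $\beta<\betac$, the definition of $\betac$ as a supremum produces measures witnessing $p_\phi(\beta)>h_0$, so $p_\phi$ is affine on no strictly larger interval; $\betac>0$ follows from $p_\phi(0)=\log|\cA|>h_0$. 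Proposition~\ref{prop:freezing=slant-asymptote} then certifies a freezing phase transition at $\betac$, and if $X_0$ is the smallest subshift containing the supports of all MMEs on $X_0$, then $\phi$ freezes on $X_0$. Finally, if $\mu\in\ES(\beta\phi)$ for some $\beta<\betac$ had $\mu(X_0)=\alpha\in(0,1]$, the decomposition $\mu=\alpha\mu_1+(1-\alpha)\mu_2$ with $\mu_1$ supported on $X_0$ and $\mu_2(X_0)=0$, combined with affinity of entropy and linearity of $\sint\phi$, yields $p_\phi(\beta)\leq\alpha h_0+(1-\alpha)p_\phi(\beta)$, which forces $\alpha=0$ as $p_\phi(\beta)>h_0$.

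The principal obstacle is the compactness/upper-semicontinuity step producing $\betac<\infty$: it hinges on the interplay of weak-$*$ continuity of each $p_n$ with upper semi-continuity of $\ent$ on $\cA^{\Zd}$, which together rule out sequences of measures with vanishing penalty but sustained excess entropy. Once this is secured, the remainder is a direct consequence of the variational principle, convexity of the pressure, and affinity of entropy, and the argument goes through for any summable choice of weights $(c_n)$.
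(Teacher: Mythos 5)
There is a fatal gap, and it sits exactly where you flag the ``principal obstacle'': the finiteness of $\betac$. In your extraction argument you assert that if the ratio $(\ent(\mu_k)-h_0)/\sum_n c_n p_n(\mu_k)$ blows up then, after passing to a subsequence, $\ent(\mu_k)\to h_\infty\ge h_0+\delta$ for some $\delta>0$. Nothing forces this: the ratio can diverge with the numerator tending to $0$ as well, just more slowly than the denominator. In that case any weak-$*$ limit $\mu_\infty$ is simply a measure of maximal entropy of $X_0$, upper semi-continuity gives $\ent(\mu_\infty)\ge h_0$, and there is no contradiction. So the compactness argument proves nothing, and the finiteness of the supremum --- which is the entire content of the theorem --- is left unestablished.

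Worse, for your choice of weights the conclusion is actually \emph{false}. Your $\phi$ is of the form $\phi(x)=-a_j$ when $\dist(x,X_0)=2^{-j}$ with $a_j=\sum_{n\ge j}c_n$, and $c_n=2^{-n}$ gives $\var_n\phi=O(2^{-n})$, hence $\sum_n n^{d-1}\var_n\phi<\infty$. By the paper's Theorem B (via Theorem \ref{nogo-thm}), every equilibrium state of $\beta\phi$ is then \emph{fully supported} for every $\beta$, so $\ES(\beta\phi)$ can never coincide with the measures of maximal entropy of the proper subshift $X_0$, and no freezing phase transition occurs. Concretely, your supremum equals $+\infty$: starting from a measure of maximal entropy $\mu_0$ on $X_0$ and overwriting blocks by a fixed forbidden pattern at density $\delta$ produces invariant measures $\mu_\delta$ with $\ent(\mu_\delta)\ge h_0+\delta|\log\delta|-O(\delta)$, while $p_n(\mu_\delta)=O(n^d\delta)$ and $\sum_n c_n n^d<\infty$ give $\sum_n c_n p_n(\mu_\delta)=O(\delta)$; the ratio is therefore at least of order $|\log\delta|\to\infty$. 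This $\delta|\log\delta|$-versus-$\delta$ competition is the crux of the whole problem: a freezing potential must penalize each defect strongly enough to beat the $|\log\delta|$ entropy gain from placing sparse defects, which is why the paper is forced to take $a_j$ decaying only like the rate of convergence of $|\statbox{j}|^{-1}\log|\cL_{\statbox j}(X_0)|$ to $\ent_{\topo}(X_0)$ plus a term of order $\log\log j/j^d$, and why the hard part of its proof is the quantitative entropy estimate carried out with the dyadic tiling lemma (Lemma \ref{lem:tiling} and Lemmas \ref{lem:hest}--\ref{lem:phiest}). Your formal reduction to the finiteness of a supremum of ratios, and the downstream deductions from Proposition \ref{prop:freezing=slant-asymptote}, are fine as far as they go, but the core estimate is missing and cannot hold for any weights summable against $n^{d-1}$.
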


A more detailed statement is given in Theorem \ref{thm:existence} below. In particular, the potential 
takes countably-many values and its variation is bounded above by a term arising from the rate of convergence in the definition of topological entropy \eqref{eq:top_entropy} and an error term of order $\log\log n/n^d$.
In  particular, when the subshift $X_0$ is a full shift with alphabet $\cA'\subsetneq\cA$, 
$\var_n\phi=O\big(\frac{\log\log n}{n^d}\big)$. Another particular case is when $X_0$
is a constant-box substitution shift, in which case $\var_n\phi=O\big(\frac{\log n}{n^d}\big)$
(see Corollary \ref{cor:entropy_gap}). 

However, as our next theorem shows, a freezing phase transition is impossible for potentials $\phi$ satisfying $\var_n \phi \leq \frac{1}{n^{d+\epsilon}}$ for some $\epsilon > 0$ and all $n \geq 1$.

\begin{thmb}\label{thm:full_support}
Let $X$ be a full $d$-dimensional shift on a finite alphabet and $\phi:X\to \R$ be a 
continuous potential. If $\sum_{n\geq 1} n^{d-1}\var_n\phi<\infty$ then $\phi$ cannot have a freezing phase transition.
\end{thmb}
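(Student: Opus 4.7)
The plan is to argue by contradiction: assume $\phi$ admits a freezing phase transition at some $\betac$. By Proposition~\ref{prop:freezing=slant-asymptote} the pressure satisfies $p_\phi(\beta)=s_\phi\beta+\ent_\phi$ on $[\betac,\infty)$ but on no larger interval, so in particular $\phi$ is not cohomologous to a constant (otherwise $p_\phi$ would be affine on all of $\R$). Pick any $\beta>\betac$ and any $\mu\in\ES(\beta\phi)$. The line $t\mapsto \ent(\mu)+t\int\phi\dd\mu$ is a supporting line of the convex graph of $p_\phi$ that touches the affine piece $t\mapsto s_\phi t+\ent_\phi$ at the interior point $t=\beta$; two lines defined on an interval, one lying below the other with equality at an interior point, must coincide. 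Hence $\int\phi\dd\mu=s_\phi$ and $\ent(\mu)=\ent_\phi$, so $\mu\in\MM(\phi)$. Since $\sum_n n^{d-1}\var_n\phi<\infty$ is \emph{a fortiori} the uniform summability condition recalled in the introduction, by \cite{GT} the support of every maximizing measure lies in a proper subshift of $X$; hence there is a cylinder $[w]$ with $\mu([w])=0$.

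To close the argument I would prove that under the summability hypothesis every equilibrium state for $\beta\phi$ has full support, which contradicts $\mu([w])=0$. My route goes through the dictionary between potentials and interactions recalled in the introduction, used in the opposite direction. Concretely, one builds a shift-invariant interaction $\Phi=(\Phi_\Lambda)_{\Lambda\Subset\Z^d}$ with
\[
\phi=\sum_{\substack{\Lambda\Subset\Z^d\\0\in\Lambda}}\frac{\Phi_\Lambda}{|\Lambda|}\quad\text{and}\quad\sum_{\Lambda\ni 0}\|\Phi_\Lambda\|_\infty<\infty,
\]
by a telescoping Möbius-type formula that defines each $\Phi_\Lambda$ as an alternating sum of restrictions of $\phi$ to configurations frozen outside $\Lambda$. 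The weight $n^{d-1}$ in the hypothesis is precisely the surface-area count that converts $\var_n\phi$ into the contribution of cubes of side $n$ containing the origin to this norm sum. Given such an absolutely summable $\Phi$, the DLR--equilibrium equivalence of \cite[Theorem~15.30]{Georgii} identifies $\mu$ with a shift-invariant Gibbs measure for $\beta\Phi$, and the DLR equations yield a strictly positive uniform lower bound on each conditional probability $\mu([\sigma]\mid\cF_{\Lambda^{\comp}})$; integrating over the boundary condition gives $\mu([\sigma])>0$ for every cylinder, contradicting $\mu([w])=0$.

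The main obstacle is the quantitative interaction decomposition, i.e.\ producing the bound $\sum_{\Lambda\ni 0}\|\Phi_\Lambda\|_\infty \leq C\sum_{n\geq 1}n^{d-1}\var_n\phi$. For $d=1$ this is essentially classical: summable variation places $\phi$ in the Bowen class and is equivalent to an absolutely summable one-sided interaction. For $d\geq 2$ one has to fix a family of shapes for the supports of $\Phi$ (for instance nested cubes centred at~$0$), order them so that each $\Phi_\Lambda$ involves only finitely many previously constructed pieces, and then check that the $O(n^{d-1})$ translates of a cube of side $n$ containing the origin are exactly what is needed to weight $\var_n\phi$ as advertised. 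Once this estimate is in place the remainder of the argument is standard.
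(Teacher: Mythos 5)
Your overall architecture is the correct one and coincides with the paper's at a high level: show that under the summability hypothesis every equilibrium state is fully supported, show that a freezing transition would force the equilibrium states for $\beta>\betac$ to be maximizing measures, and show that maximizing measures are not fully supported. Your supporting-line argument for $\ES(\beta\phi)\subseteq\MM(\phi)$ when $\beta>\betac$ is fine (it is Proposition \ref{prop:affine-parts-pressure} combined with Theorem \ref{thm:folklore}).

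The genuine gap is the central step: full support of equilibrium states. The route you propose --- build a shift-invariant interaction $\Phi$ with $\phi=\sum_{\Lambda\ni 0}\Phi_\Lambda/|\Lambda|$ and $\sum_{\Lambda\ni 0}\|\Phi_\Lambda\|_\infty\le C\sum_n n^{d-1}\var_n\phi$, then invoke \cite[Theorem 15.30]{Georgii} --- is exactly the passage the paper flags as unavailable. A M\"obius-type alternating sum over subsets of $\statbox n$ produces $2^{|\statbox n|}$ terms, each of norm up to $2^{|V|}\|\phi\|_\infty$, so the resulting interaction norms grow like $\exp(c\,n^d)$ and are in no way controlled by $n^{d-1}\var_n\phi$; moreover, shift-invariance forces $\Phi_{\Lambda+\bi}=\Phi_\Lambda\circ\sigma^{\bi}$, so the identity $\sum_{\Lambda\ni 0}\Phi_\Lambda/|\Lambda|=\phi$ involves an average over all translates of each shape containing the origin, and your sketch does not explain how to invert that averaging while keeping absolute summability. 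This is why the paper states Theorem B$'$ separately for interactions and proves Theorem B by a self-contained ergodic argument (Theorem \ref{nogo-thm}): given a non-fully-supported ergodic $\mu$, one overwrites blocks with a forbidden pattern at density $\delta$, gaining entropy $\delta|\log\delta|$ against losses of order $O(\delta)$ in both the entropy corrections and in $\int\phi$, the latter estimated using precisely the weights $n^{d-1}\var_n\phi$. If you insist on a Gibbsian route, the workable one is Keller's \cite{Keller}, which defines Gibbs states directly from the potential under the same summability condition, with no interaction needed; the paper records this in a remark. A secondary, fixable weakness: your appeal to \cite{GT} for non-full support of maximizing measures leans on the informal statement in the introduction; \cite{GT} is formulated for interactions, and ``not cohomologous to a constant'' does not obviously exclude $\int\phi\dd\mu$ being constant over all invariant measures when $d\ge 2$ (constancy only places $\phi-s_\phi$ in the \emph{closed} span of coboundaries). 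The paper instead proves this step by an elementary pattern-exchange argument (Proposition \ref{prop:notfullsupp}) and disposes of the constant-integral case separately, where the pressure is affine on all of $\R$ and no transition can occur.
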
 


\subsection{Some remarks about Theorem A}
One may wonder about what happens for $\beta=\betac$.
As demonstrated already by the classical example of Hofbauer (see below), $\ES(\beta_c\phi)$ may or 
may not coincide with $\ZTA(\phi)$, and the pressure function may or may not be differentiable at $\betac$.

Mathematically, one can consider negative inverse temperatures.
Associated with the limit $\beta \to -\infty$, one can consider minimizing measures. The corresponding freezing question relates to whether it is possible to reach the associated slant asymptote, which would occur to the left of a certain inverse temperature $\beta'_c$ (which can be $\neq\betac$). Clearly, this is equivalent to analyzing the problem for $\beta'(-\phi)$ with $\beta'\to\infty$ since $\beta\phi=(-\beta)(-\phi)$.

The last remark is related to the measure of maximal entropy on $X$.
If $\phi=u\circ \sigma-u+c$ where $u:X\to\R$ is continuous, $c\in\R$, and $\sigma:X\to X$ is the shift map, 
then $p_\phi(\beta)=c\beta + \log|\cA|$. Moreover, these potentials correspond to the 
same equilibrium state which is the measure of maximal entropy of $X$ whose value is
$\log|\cA|=\ent_{\topo}(X)$, the topological entropy of $X$ (see next section for more details).
All shift-invariant measures are maximizing measures for this $\phi$. 

\subsection{Intermezzo on statistical physics}

One may naturally ask what happens when starting from an interaction
$\Phi=(\Phi_{\Lambda})_{\Lambda\Subset \Z^d}$. Unfortunately, the correspondence between $\Phi$ 
and $\phi$ is rather intricate in general. For instance, the mapping $\Phi \mapsto \varphi$ is 
not injective, and there is no general method to construct preimages of $\phi$; 
see \cite[pp. 34-35]{Ruelle}). Hence, some 
information is inevitably lost when comparing the regularity of $\Phi$ with that of $\phi$.

For finite-range interactions -- which encompass most models in dimension
$d \geq 2$ -- the situation is straightforward, as this implies that $\var_n \phi$ vanishes for 
all sufficiently large $n$. Hence, Theorem B implies that finite-range 
interactions cannot have a freezing phase transition. But of course the condition
$\sum_{n\geq 1} n^{d-1}\var_n\phi<\infty$
can be satisfied by long-range interactions, provided they decay fast enough.
(If there exists a finite $R>0$ such that $\Phi_\Lambda=0$ for all
sets $\Lambda\Subset\Z^d$ whose diameter is larger than $R$, the potential is said to have
finite range. Otherwise, it has infinite range.)
The issue is that one cannot directly derive the natural assumption on $\Phi$ from that on
$\phi$, which separates interactions based on whether they can induce a freezing phase 
transition or not. Therefore, we state a theorem specifically for interactions.

\begin{thmbprime}\label{thm:full_support_Gibbs}
Let $X$ be a full $d$-dimensional shift on a finite alphabet and $\Phi=(\Phi_{\Lambda})_{\Lambda\Subset \Z^d}$ be an interaction.
If $\sum_{\Lambda\Subset \Z^d,\,\Lambda\ni 0}\|\Phi_\Lambda\|_\infty<\infty$ then $\Phi$ cannot have a freezing phase transition.
\end{thmbprime}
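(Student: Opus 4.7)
The plan is to argue by contradiction, using two classical features of absolutely summable interactions: the set $\ES(\beta\phi)$ coincides with the set of shift-invariant Gibbs measures for $\beta\Phi$ (see \cite[Theorem~15.30]{Georgii} or \cite[Theorem~4.2]{Ruelle}), and every such Gibbs measure has full support on $X$ because the local specifications are uniformly bounded below. I would then extract, via a ratio computation in the DLR equations, a rigidity that forces the pressure to be affine on all of $\R$, which contradicts Proposition~\ref{prop:freezing=slant-asymptote}.

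First I would set up the Gibbs formalism. For any $\Lambda\Subset\Z^d$, the relative Hamiltonian $H_\Lambda=\sum_{A:\,A\cap\Lambda\ne\emptyset}\Phi_A$ converges uniformly because $\sum_{A\cap\Lambda\ne\emptyset}\|\Phi_A\|_\infty\leq|\Lambda|\sum_{A\ni 0}\|\Phi_A\|_\infty<\infty$; in particular $H_\Lambda$ is bounded and continuous on $X$, and the specification
\[
\gamma^{\beta\Phi}_\Lambda(\sigma|\eta)=\frac{\exp(-\beta H_\Lambda(\sigma\eta))}{\sum_{\sigma'\in\cA^\Lambda}\exp(-\beta H_\Lambda(\sigma'\eta))}
\]
is bounded below by $|\cA|^{-|\Lambda|}\exp(-2|\beta|\|H_\Lambda\|_\infty)>0$. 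Hence every shift-invariant Gibbs measure for $\beta\Phi$ puts positive mass on each cylinder and has full support on $X$.

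Next, I would assume for contradiction that $\Phi$ has a freezing phase transition at some $\betac\in\R$ and choose distinct $\beta,\beta'>\betac$ together with any $\mu\in\ES(\beta\phi)=\ES(\beta'\phi)$. Then $\mu$ is Gibbs for both $\beta\Phi$ and $\beta'\Phi$, so the two specifications agree $\mu$-almost surely. Evaluating this equality at the cylinder $[\sigma]$ with $\sigma\in\cA^\Lambda$ and taking the ratio at two configurations $\sigma,\sigma'\in\cA^\Lambda$ yields, for $\mu$-a.e.\ $\eta$,
\[
(\beta-\beta')\bigl(H_\Lambda(\sigma\eta)-H_\Lambda(\sigma'\eta)\bigr)=0,
\]
so $H_\Lambda(\cdot\,\eta)$ is constant on $\cA^\Lambda$ for $\mu$-almost every $\eta$. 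This is the main technical step: combining full support of $\mu$ with continuity of $H_\Lambda$, one upgrades this $\mu$-a.e.\ statement to the pointwise assertion that $H_\Lambda(\sigma\eta)$ is independent of $\sigma\in\cA^\Lambda$ for \emph{every} $\eta\in X$ and every $\Lambda\Subset\Z^d$.

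Finally, I would conclude. Once $H_\Lambda$ is independent of its interior coordinates for every $\Lambda$, the specification $\gamma^{\beta\Phi}_\Lambda(\cdot|\eta)$ is the uniform law on $\cA^\Lambda$ at every $\beta$ and every $\eta$, so the Bernoulli measure $\mu_0$ with uniform marginals is a shift-invariant Gibbs measure, and therefore an equilibrium state, for $\beta\Phi$ at every $\beta\in\R$. Using $\ent(\mu_0)=\log|\cA|$, the variational principle then forces
\[
p_\phi(\beta)=\log|\cA|+\beta\int\phi\dd\mu_0\qquad\text{for every }\beta\in\R,
\]
so $p_\phi$ is affine on all of $\R$. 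This contradicts Proposition~\ref{prop:freezing=slant-asymptote}, which requires $p_\phi$ not to be affine on any interval strictly containing $[\betac,\infty)$.
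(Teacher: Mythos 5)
Your argument is correct, and after the shared first step (absolute summability implies the specifications are uniformly bounded below, so shift-invariant Gibbs states -- which by Dobrushin/Lanford--Ruelle are exactly the equilibrium states -- have full support) it diverges from the paper's route. The paper finishes by invoking Garibaldi--Thieullen: maximizing measures are carried by the subshift $X_\Phi$ of maximizing configurations, so if $X_\Phi\subsetneq X$ the zero-temperature accumulation points cannot be fully supported (contradicting full support of equilibrium states for $\beta>\betac$), while if $X_\Phi=X$ then $\int\phi\dd\nu$ is constant over invariant measures and the pressure is affine on all of $\R$. You instead exploit a Gibbs-rigidity phenomenon: a single measure that is Gibbs for both $\beta\Phi$ and $\beta'\Phi$ with $\beta\ne\beta'$ forces, via ratios in the DLR equations, the relative Hamiltonians $H_\Lambda(\cdot\,\eta)$ to be constant in the interior coordinates $\mu$-a.e., and the full-support-plus-continuity upgrade (a closed set of full measure under a fully supported measure is all of $X$) makes this hold everywhere, so the specification is uniform and the pressure is affine on $\R$, contradicting Proposition~\ref{prop:freezing=slant-asymptote}. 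Your version is more self-contained -- it needs only the DLR formalism and the Gibbs/equilibrium equivalence, not the ergodic description of ground states from \cite{GT} -- and it yields a slightly stronger rigidity statement (constancy of $\ES(\beta\phi)$ on any nondegenerate interval forces the interaction to be physically trivial); the paper's route, in exchange, ties the result to the zero-temperature picture and gives information on where the $\ZTA$ measures live. The only place where you should add a line of detail is the ``upgrade'' step: state explicitly that $\{\eta: H_\Lambda(\sigma\eta)=H_\Lambda(\sigma'\eta)\ \forall \sigma,\sigma'\in\cA^\Lambda\}$ is closed because $\eta\mapsto H_\Lambda(\sigma\eta)$ is continuous (uniform convergence of $\sum_{A\cap\Lambda\ne\emptyset}\Phi_A$), and that its open complement is $\mu$-null, hence empty by full support.
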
 

The summability condition in the previous statement is satisfactory because it holds precisely for interactions belonging to the classical space of interactions. We provide more information
in Section \ref{sec:proof-theorem-B'}.
We do not claim that Theorem B' is entirely new, but we were unable to find a suitable reference.

We end this section with the following remark regarding the analog of Theorem A in terms of interactions.
\begin{rem}
Theorem \ref{thm:existence} below provides a more precise formulation of Theorem A by explicitly expressing the potentials that exhibit a freezing phase transition in terms of the target subshift $X_0$. For each such potential, we derive a formula for the corresponding interaction, which, as expected, is not summable in the sense of Theorem B' (see Remark \ref{rem:freezing-Phi} for further details).
\end{rem}

\subsection{Previously known results}

Previous studies on freezing phase transitions have focused exclusively on the case $d=1$ and specific examples. The constructions 
were carried out in one-sided settings, i.e., $X=\cA^\N$, and the advantage is two-fold. First, Ruelle's Perron-Frobenius operator 
can be used to evaluate the pressure. Second, a potential defined on a one-sided shift can be readily extended to the corresponding two-sided shift without altering the pressure function. Hence, the results concerning phase transitions 
immediately carry over to the two-sided settings. We note that the converse is not true. Walters \cite{W2} provides necessary and sufficient 
conditions for a continuous two-sided potential to be cohomologous to a one-sided potential, and these conditions cannot be 
satisfied by any potential with a freezing phase transition.  

In the context of shift dynamical systems, the first example of a freezing phase
transition is due to Hofbauer \cite{Hofbauer}, though his motivation was to exhibit a continuous 
potential $\phi$ with two distinct (ergodic) equilibrium states (there is no inverse temperature in his paper). He works with
the one-sided full shift $\{0,1\}^{\N}$ and defines the potential $\phi$ by 
$\phi(0^\infty)=0$ and $\phi(x)=a_n$ if $\dist(x,0^\infty)=2^{-n}$, where $(a_n)_{n\geq 0}$ 
is a sequence of negative real numbers such that $\sum_{n\geq 0} \e^{a_0+\cdots+a_n}=1$.
If $\sum_{n \geq 0} (n+1)\e^{a_0+\cdots+a_n} < \infty$, there are exactly two ergodic equilibrium states. Conversely, if $\sum_{n \geq 0} (n+1)\e^{a_0+\cdots+a_n} = \infty$, there is a unique equilibrium state.
Reformulated in our context, he proves that this potential has a freezing phase transition at $\betac=1$ with
$X_0=\{0^\infty\}$, hence $s_\phi=\ent_\phi=0$ and $\ES(\beta\phi)=\MM(\phi)=\ZTA(\phi)=\{\delta_{0^\infty}\}$ for 
all $\beta>1$. In these examples, prior to the freezing phase transition, the unique equilibrium state is
fully supported, whereas after the phase transition, the unique equilibrium state is supported on a single point.
When the previous series is finite, the pressure function is not differentiable at $\betac$,
while it is differentiable when it is infinite.
Hofbauer was inspired by the Fisher-Felderhof droplet model \cite{FF}, of 
which it is a crude simplification.
We refer to \cite{Ledrappier} for a shorter, alternative proof of the nonuniqueness part of Hofbauer's results.
A systematic study of a family of Hofbauer potentials exhibiting a phase transition
at some value $\betac$ was done by Lopes \cite{Lo3}.

Another example in the same spirit as Hofbauer's example is given in \cite{Olivier} to exhibit 
non-analyticity of multifractal spectra for pointwise dimension. The author considers
$X=\{0,1,2\}^{\N}$ and, reformulated in our context, he has $X_0=\{0,1\}^{\N}$ and $\betac=1$. A general case where $X=\cA^\N$ for a finite alphabet $\cA$ and $X_0\subset X$ is any subshift of finite type is studied in \cite{KTh1}, where an explicit construction of a potential with a freezing phase transition on $X_0$ is carried out. 

In the context of one-dimensional, one-sided shifts, with potentials of the same form as the ones 
used in this paper, references \cite{BL1, BL2} assert that they have established a freezing phase 
transition with $X_0$ defined by the subshift generated by either the Thue-Morse or Fibonacci 
substitutions. Unfortunately, the proofs presented in these articles are flawed.  
Work in progress \cite{BCHL} establishes a freezing phase transition in the Thue-Morse subshift, 
building on the techniques of \cite{BL1}, but requiring significant combinatorial refinements. 
In particular, since these papers use Ruelle's operator, this approach cannot be extended to 
higher dimensions.

Recent work \cite{BKL} broadly extends \cite{BL1, BL2, BCHL} by proving existence of freezing 
phase transitions for all zero-entropy (one-sided and two-sided) subshifts, instead of 
very specific ones. 
However, the proof is not constructive, since the potentials are ultimately obtained 
through the Hahn-Banach theorem. 
Recently, leveraging the same underlying techniques -- which are inherently non-constructive -- it was demonstrated in 
\cite{Hedges}, within a context encompassing ours, that for any nonempty collection of ergodic measures with a constant entropy 
function, there exists a continuous $\phi$ such that, for all $\beta \geq 1$, its closed convex hull coincides with
$\ES(\beta\phi)$.

There is an abstract result in \cite{KTh} that guarantees the existence of $\phi$ leading to a pressure
function as in Figure \ref{fig:fpt}. Precisely, given a compact 
topological dynamical system $(X,T)$ with positive entropy and upper semi-continuous entropy map, and any closed invariant subset 
$X_0\subset X$ with positive topological entropy, there exists a continuous potential $\phi: X\to\R$ satisfying (\ref{prop_one}), (\ref{prop_two}) and (\ref{prop_three}) (see Lemma \ref{lem:special-class} below). 
The proof is based on the fact that equilibrium states can be viewed as tangent 
functionals to the pressure and relies on a theorem by Israel about approximations of shift-
invariant measures by equilibrium states; see \cite{Israel-book} or \cite[Proposition 17.7, p. 341 ]{Georgii}.
It does not provide any information on the type of potentials 
which would generate a freezing phase transition. 

To conclude this section, we observe that in Hofbauer's example, as well as in \cite{BCHL}, there exists a unique equilibrium state with full support for all $0 \leq \beta < \beta_c$. In the construction provided in the proof of Theorem A (detailed in Theorem~\ref{thm:existence}), it remains unclear whether fully supported measures $\mu \in \ES(\beta\phi)$ exist for all $0 \leq \beta < \beta_c$. We leave this as an open question.

\subsection{Some illustrative examples}

Theorem A is applicable to any subshift; however, we highlight specific examples of particular interest, especially in light of the motivations outlined in the introduction.

We note that our results have interesting implications already in one dimension ($d=1$). For example, the objective of \cite{BL1} and \cite{BL2} was
to determine for which sequences $(a_n)_{n\geq 1}$ the Hofbauer-type potential $\phi(x)=a_n$ if $\dist(x,X_0)=2^{-n}$ has a freezing phase 
transition given that $X_0$ is the Thue-Morse shift or the Fibonacci subshift respectively. Their conclusion is that $\phi$ freezes when $a_n\approx -\frac{1}{n^\alpha}, \alpha \in (0,\,1)$ for the former and when $a_n\approx -\frac{1}{n}$ for the latter. Their techniques rely 
heavily on the specific properties of the Thue-Morse subshift and Fibonacci shift, which are very well understood, and the use of the transfer operator on 
a certain induced system. The proofs of  \cite{BL1,BL2} appear to be incomplete, and the ongoing effort \cite{BCHL} aims to justify them in the Thue-Morse case as well as establish freezing when $a_n\approx -\frac{1}{n}$.  Meanwhile, a direct application of Theorem \ref{thm:existence} (for the two-sided Thue-Morse and Fibonacci subshifts) 
gives that $\phi$ freezes when $a_n\approx -\frac{\log n}{n}$. Also, Theorem B shows that $\phi$ does not freeze for $a_n\leq -\frac{1}{n^{1+\epsilon}}$ for all $n$ and for some $\epsilon>0$. We 
discuss further freezing phase transitions for  one-dimensional subshifts in Section \ref{appendix:1d}. In particular, up to a logarithmic factor, we recover the result of \cite{BCHL} on the Thue-Morse subshift using a completely different approach. Moreover, Theorem \ref{thm:super-pins} applies to any subshift.

In \cite{vEKM}, the authors consider Sturmian subshifts (which can be used for instance to code irrational
rotations on the unit circle). Here $\cA=\{0,1\}$ and these subshifts are minimal, uniquely ergodic, and have zero topological entropy. They construct an infinite-range potential, decaying as fast as one wants, whose maximizing subshift is a Sturmian subshift. Using Theorem A (see Theorem \ref{thm:existence} for a more precise statement), we know how to construct infinite-range 
potentials, decaying slowly enough, such that a freezing phase transition occurs. 
However, using Theorem B, we know that the potential in \cite{vEKM} cannot have a freezing phase transition.

We now turn to higher-dimensional subshifts, focusing on subshifts of finite type (SFTs) which exhibit significantly greater diversity compared to their one-dimensional counterparts \cite{LS},
and are relevant to model quasicrystals, as mentioned in the introduction. Informally, $X_0\subset \cA^{\Z^d}$ is an SFT if no configuration in $X_0$ contains a pattern from a prescribed collection
of patterns $F$, where $F\subset \cA^{\Lambda}$ and $\Lambda\subset\Z^d$ is finite. 

The first example we mention is taken from \cite[Example 1.5]{BS-2008}. Let $M \in \N$ and define $\cA = \{-M, -M+1, \ldots, -2, -1, 1, 2, \ldots, M-1, M\}$. Then, $x \in X_0$ if and only if, whenever $\|\bi - \bj\|_1 = 1$, we have $(x_\bi, x_\bj) \notin \{(a, a') \in \cA \times \cA : aa' \leq -2\}$. In other words, the only way for a negative number to have a positive neighbor is through the pair $\pm 1$. This defines a (strongly) irreducible subshift of finite type (SFT), which consequently has positive topological entropy. Theorem 1.17 in \cite{BS-2008} then asserts that if $M > 4\e 28^d$, there are exactly two extremal (ergodic) measures of maximal entropy.

In dimension $d \geq 2$, there exist subshifts of finite type (SFTs) that have uncountably many configurations but no periodic configurations -- a phenomenon that is impossible for $d = 1$. Let us focus on the case $d = 2$. Such SFTs are typically constructed using aperiodic Wang tilings. Given a finite collection of $1\times 1$ square tiles with colored edges, copies of the tiles are placed (without rotation or reflection) at the points of $\mathbb{Z}^2$, arranged so that adjacent edges match in color. For further details, see \cite{LS}.  
A notable example is the Kari-Culik shift, which is constructed in a fundamentally different way from classical examples and, moreover, has positive topological entropy \cite{DGG2017}. According to a folk theorem, this SFT cannot be uniquely ergodic. Recently, a family of minimal, uniquely ergodic, aperiodic Wang shifts was constructed in \cite{Labbe-2023, Labbe-2024}. These shifts are self-similar in the sense that they can also be obtained using two-dimensional substitutions, where the inflation factors are the $n$-th metallic means. This provides an elegant family of quasicrystal models for which we have explicit constructions of potentials that freeze on them.

\section{Freezing on any proper subshift}
\label{sec:freezing}

\subsection{Notation and definitions}
Let $X=\cA^{\Z^d}$ be the $d$-dimensional full shift on a finite 
alphabet $\cA$, {\em i.e.}, $X$ is the space of all maps
$x:\Z^d\to \cA$.
We will call them configurations.
We put the discrete topology on $\cA$ and the product topology on $X$ (which is thus  compact).
For $\Lambda\subset \Z^d$, $x\vert_{\Lambda}$ is the projection of $x$ onto $\Lambda$ (or the restriction of $x$
to $\Lambda$ as a map).
For $\bn\in \Z^d$ we define the shift map $\sigma^{\bn}$ of $X$ 
by $(\sigma^{\bn} x)_{\bm}=x_{\bn+\bm}$ for $x=(x_{\bm})_{\bm\in \Z^d}\in X$.
The shift map is the natural action of $\Zd$ on $X$ by translations.
We take $\dist(x,y)=2^{-n}$, where
$n=n(x,y)=\min\{\Vert\bn\Vert_\infty:x_{\bn}\neq y_{\bn}\}$, as the
distance between two configurations $x,y$ (compatible with 
the product topology).

The space of shift-invariant probability measures is equipped with the vague topology, under which it is compact.

A subshift $X_0$ of $X$ is a closed subset $X_0\in X$ which is 
$\sigma^{\bn}$-invariant for every $\bn\in\Z^d$. Given $\Lambda\Subset\Zd$ 
(that is, $\Lambda$ is a finite subset of $\Z^d$), 
a pattern $w\in \cA^\Lambda$ appears in a configuration $x$ if 
there exists $\bm_{0}\in\Zd$ such that $w_{\bm}=x_{\bm+\bm_0}$ for all 
$\bm\in\Lambda$. We recall the definition of the language of $X_0$.
For $\Lambda\Subset \Zd$ , define $\cL_\Lambda(X_0)$ as the 
set of patterns supported on $\Lambda$ that appear in some configuration
$x\in X_0$. The language of $X_0$ is defined as 
$\cL(X_0)=\bigcup_{\Lambda\Subset \Zd}\cL_\Lambda(X_0)$.


Throughout the paper, we define 
$\ergbox n=[0,n-1]^d\cap \Zd$ and $\statbox n=[-n,n]^d\cap \Zd$; these
appear more frequently in ergodic theory and statistical physics respectively. 
For $\Lambda\subset \Z^d$, we denote by $|\Lambda|$ its volume, that is, the number of sites
in $\Lambda$.
The topological entropy of $X_0$ is 
\begin{equation}\label{eq:top_entropy}
\ent_{\topo}(X_0)=\lim_{n\to\infty}
\frac{1}{\vert\ergbox n\vert}\log \vert \cL_{\ergbox n}(X_0)\vert.
\end{equation}
A function $\phi:X\to\R$ is continuous if and only if  
$\var_n(\phi):=\sup\{|\phi(x)-\phi(y)|: x\vert_{\statbox n}=y\vert_{\statbox n}\}$ goes to $0$. 
(Note that $\var_n(\phi)=\sup\{\vert\phi(x)-\phi(y)\vert : x,y\in X, \dist(x,y)\le 2^{-n}\}$, the modulus of continuity of $\phi$.)
Given $\phi:X\to\R$ continuous, recall that $\mu\in \ES(\phi)$ means that
\[
\ent(\nu)+\int \phi \dd\mu=\sup\left\{ \ent(\nu)+\int \phi \dd\nu: \nu\;\text{shift-invariant}\right\},
\]
where $\ent(\nu)$ is the (measure-theoretic) entropy of $\nu$, that is,
\[
\ent(\nu)=\lim_{n\to\infty}-\frac{1}{|\ergbox n|}\sum_{w\,\in\cA^{\ergbox n}}
\nu([w])\log \nu([w])\,
\]
with $[w]=\left\{x\in X:x_{\ergbox n}=w\right\}$. 
By compactness in vague topology, there exists at 
least one equilibrium state for $\phi$, that is, at least one $\mu$ maximizing the 
$\ent(\nu)+\int \phi \dd\nu$, as a function of $\nu$; this is the variational principle.
By definition, the above supremum is the topological pressure of $\phi$, denoted 
$P(\phi)$. If $\phi\equiv 0$ then $P(\phi)=P(0)=\ent_{\topo}(X)=\log|\mathcal{A}|$. 
(More generally, this is true if $\phi=u\circ \sigma-u+c$ where $u:X\to\R$ is continuous, $c\in\R$, see \cite[p. 61]{Ruelle}.)
Given $\beta\in\R$, we can define, as done in the introduction, the equilibrium states of $\beta\phi$ in the obvious way,
as well as the pressure function $\beta\mapsto p_\phi(\beta):=P(\beta\phi)$ (which is convex and Lipschitz continuous).
Given a subshift $X_0$ of $X$, its measures of maximal entropy are the measures whose measure-theoretic entropy is the same as $\ent_{\topo}(X_0)$.

\subsection{Dyadic tiling lemma}
The following lemma plays a key role in our construction of potentials that freeze on a subshift. 
To show that equilibrium states are supported on the given subshift, we will show that ergodic measures
not supported on the subshift do not achieve the supremum in the variational principle. Our 
approach to this is based on efficiently partitioning the orbit of a point lying outside
the subshift into blocks that do lie inside the subshift. 

\begin{defn}
A \emph{tiling} $t$ of $\Z^d$ is a partition of $\Z^d$ into a disjoint union of finite non-empty subsets: $\Z^d=\bigcup_{i\in I}S_i$. 
Given $\bj\in\Z^d$, we let $t(\bj)$ denote the set $S_i$ containing $\bj$.
\end{defn}

We let $\mathcal T(\Z^d)$ denote the collection of all tilings of $\Z^d$. If $t=\{S_i\colon i\in I\}$ is a tiling
and $\bj\in\Z^d$, we define the \emph{translation of $t$ by $\bj$} to be $\mathsf{Tr}_{\bj}(t)=\{S_i-\bj\colon i\in I\}$, where 
$S_i-\bj$ denotes the translate of $S_i$ by the vector $\bj$, that is, 
$S_i-\bj=\{\bk-\bj\colon \bk\in S_i\}$.

\begin{defn}
    Let $T$ be a measurable $\Z^d$-action on a measure space $(Z,\rho)$. An \emph{equivariant tiling} 
    is a map $\tau$ from $Z$ to $\mathcal T(\Z^d)$ with the property that $\tau(T^{\bj}z)=\mathsf{Tr}_{\bj}(\tau(z))$.
\end{defn}
Rather than writing $\tau(z)$ for the tiling, we will write $\tau^z$, so that $\tau^z(\bj)$ is the tile
in $\tau(z)$ containing $\bj$.

We describe the standard dyadic odometer as the space $\Sigma=\{0,1\}^{\N_0}$, where
we write a point $y\in \Sigma$ as a left-infinite sequence $\ldots y_2y_1y_0$ and define the homeomorphism $\iota$ of $\Sigma$ by
\[
\iota(y)=\begin{cases}
\ldots y_{n+1}10\ldots0&\text{if $y_n\ldots y_0=01\ldots 1$ for some $n\ge 0$};\\
\ldots 0000&\text{if $y=\ldots 1111$}.
\end{cases}
\]
That is, in our presentation of the standard odometer, the map $\iota$ is binary addition of 1 with carry to the left. 
Given $y\in\Sigma$, we define a family of equivalence relations on $\Z$ by $i\uset{\widesim}{n,y} j$ if 
$\iota^i(y)$ and $\iota^j(y)$ agree in all coordinates at level $n$ and above. 
That is, the $\uset{\widesim}{0,y}$-equivalence classes corresponding to $y$ partition $\Z$ into singletons; 
the $\uset{\widesim}{1,y}$-equivalence classes corresponding to $y$ partition $\Z$ into intervals of length
2;
and the $\uset{\widesim}{n,y}$-equivalence classes corresponding to $y$ partition $\Z$ into intervals of length $2^n$. 
Further, each $\uset{\widesim}{n,y}$-equivalence class is the union of exactly two
$\uset{\widesim}{n-1,y}$-equivalence classes. 
If $\tau_n$ is the map sending $y$ to the collection of
$\uset{\widesim}{n,y}$-equivalence classes, we see that each
$\tau_n$ is a $\Z$-equivariant tiling.

\begin{defn}
The \emph{$\Z^d$-dyadic odometer} is the space
\[
Y=\{(y_1,\ldots,y_d)\colon y_i\in \Sigma\},
\]
with the $\Z^d$-action on $Y$ given by $T_Y^{\bj}(y)=(\iota^{j_1}(y_1),\ldots,\iota^{j_d}(y_d))$.
\end{defn}

The $\Z^d$-dyadic odometer has a family of natural equivariant tilings given by
\[
\tau_n^y=\{S_n(\bj,y)\colon \bj\in \Z^d\},
\]
where 
\[
S_n(\bj,y)=\big\{\bk\in \Z^d\colon j_i\uset{\sim}{n,y_i}k_i\text{ for each $1\le i\le d$}\big\}.
\]
That is, $\tau^y_n$ is a decomposition of $\Z^d$ into a grid of $2^n\times\cdots\times 2^n$ boxes, and each $\tau_n^y$ is the 
union of $2^d$ $\tau^y_{n-1}$ boxes. 

The following {\em Dyadic Tiling Lemma} is closely related to a result that appears in the work of Aaronson and Weiss \cite{AW}; see also notes of Meyerovitch \cite{Meyerovitch} for related work. 

\begin{lem}[Dyadic Tiling Lemma]\label{lem:tiling}
Let $X_0$ be a proper subshift of $X=\cA^{\Z^d}$ and let $\mu$ be an ergodic shift-invariant measure 
supported on $X\setminus X_0$. Let $Y$ be the dyadic $\Z^d$-odometer with unique invariant
measure $\nu$ and let $\bar\mu$ be an ergodic joining of $\mu$ and $\nu$, where 
$\Z^d$ acts by the product action, $T^{\bj}(x,y)=\big(\sigma^{\bj}(x),T_Y^{\bj}(y)\big)$.

Then there exists an equivariant tiling $\tau\colon X\times Y\to \cT(\Z^d)$ with the following properties:
\begin{enumerate}
    \item For $\bar\mu$-a.e. $(x,y)$ and each $\bj\in\Z^d$, there exists
    $n\in\N_0$ such that $\tau^{(x,y)}(\bj)=\tau_n^y(\bj)$. That is, all tiles appearing in 
    $\tau(x,y)$ are dyadic tiles defined by $y$ (at various scales).
    \item For $\bar\mu$-a.e. $(x,y)$ and each $\bj\in\Z^d$,
    if $\tau^{(x,y)}(\bj)=\tau_n^y(\bj)$ and $n>0$, then $x\vert_{\tau^{(x,y)}(\bj)}\in\cL(X_0)$. That is, the
    restriction of $x$ to any $\tau^{(x,y)}$-tile that is larger than $1\times\cdots\times 1$
    belongs to the language of $X_0$;
    \item For $\bar\mu$-a.e. $(x,y)$ and each $\bj\in\Z^d$,
    if $\tau^{(x,y)}(\bj)=\tau_n^y(\bj)$, then $x|_{\tau_{n+1}^y(\bj)}\not\in\cL(X_0)$. That is, the
    restriction of $x$ to any dyadic tile larger than the $\tau^{(x,y)}$-tile does not belong to the language of $X_0$.
    \label{it:maximal}
\end{enumerate}
\end{lem}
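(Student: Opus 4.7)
My plan is to construct the tiling pointwise by a maximal-scale rule: assign to each lattice site $\bj\in\Z^d$ the largest dyadic scale at which the local restriction of $x$ remains a pattern of $X_0$. Concretely, I will set
\[
N(\bj;x,y)=\sup\big\{n\ge 0:\ x|_{\tau_n^y(\bj)}\in \cL(X_0)\big\}\in\N_0\cup\{\infty\},
\]
using the convention $N=0$ when this set is empty, and then define $\tau^{(x,y)}(\bj)=\tau_{N(\bj;x,y)}^y(\bj)$. Property (1) of the lemma is then immediate. Property (2) holds because $x|_{\tau_N^y(\bj)}\in\cL(X_0)$ as soon as $N\ge 1$. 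Property (3) holds because either $N$ realizes the supremum, in which case $\tau_{N+1}^y(\bj)$ fails by maximality, or the defining set is empty, in which case $x_\bj\notin\cL(X_0)$ and therefore no pattern containing position $\bj$ can lie in $\cL(X_0)$.

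Two substantive points remain. First, I must verify that $\{\tau_{N(\bj)}^y(\bj):\bj\in\Z^d\}$ is a partition. Since the $\tau_n^y$ refine each other as $n$ decreases, if two tiles overlap at some common point with $N(\bj_1)>N(\bj_2)$, then $\tau_{N(\bj_2)}^y(\bj_2)\subseteq \tau_{N(\bj_1)}^y(\bj_2)=\tau_{N(\bj_1)}^y(\bj_1)$, which gives $x|_{\tau_{N(\bj_1)}^y(\bj_2)}\in\cL(X_0)$, contradicting the maximality of $N(\bj_2)$; at equal scales the tiles already coincide. Second---the step I expect to be the main obstacle---I must rule out $N=\infty$ almost surely. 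A direct computation on the odometer shows that for $\nu$-a.e.\ $y$ and every $\bj$, the nested boxes $\tau_n^y(\bj)$ exhaust $\Z^d$: in one coordinate the equivalence class of $j_i$ is the interval $[j_i-r_{n,y_i},\ j_i+2^n-1-r_{n,y_i}]$ with $r_{n,y_i}=\sum_{k<n}(y_i)_k\,2^k$, and almost surely each $y_i$ contains infinitely many $0$'s and $1$'s, driving both endpoints to $\pm\infty$. If $N(\bj;x,y)=\infty$, I can then select $z^{(n)}\in X_0$ agreeing with $x$ on $\tau_n^y(\bj)$; compactness and closedness of $X_0$ yield a limit $z^*\in X_0$ agreeing with $x$ on all of $\Z^d$, whence $x\in X_0$. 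Since the $X$-marginal of $\bar\mu$ is $\mu$ and $\mu(X_0)=0$, this bad event has $\bar\mu$-measure zero.

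Equivariance is a routine final step: $\tau_n$ is itself $\Z^d$-equivariant on $Y$ (so $\tau_n^{T_Y^\bi y}(\bj)=\tau_n^y(\bj+\bi)-\bi$), and the predicate ``$x|_\Lambda\in\cL(X_0)$'' is invariant under the joint shift because the language of $X_0$ is translation-invariant. These combine to give $N(\bj;\sigma^\bi x,T_Y^\bi y)=N(\bj+\bi;x,y)$ and hence $\tau(T^\bi(x,y))=\mathsf{Tr}_\bi(\tau(x,y))$. I note that ergodicity of $\bar\mu$ is not needed for the construction itself; it will be exploited only when the lemma is applied later in the main argument.
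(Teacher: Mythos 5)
Your proposal is correct and follows essentially the same route as the paper: define $N(\bj;x,y)$ as the maximal dyadic scale at which $x$ restricted to the $y$-dyadic box around $\bj$ lies in $\cL(X_0)$, take that box as the tile, and verify the partition property from the nesting of the dyadic tilings. The only difference is that you spell out the almost-sure finiteness of $N$ (via exhaustion of $\Z^d$ by the dyadic boxes for $\nu$-a.e.\ $y$ plus compactness of $X_0$ and $\mu(X_0)=0$), a step the paper dispatches in one line; this is a welcome elaboration rather than a different argument.
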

Taken as a whole, the lemma states that the tiling $\tau(x,y)$ is dyadic and each tile larger than the 0th
level is the maximal dyadic tile for which the corresponding $x$ word belongs to the language of $X_0$. 

\begin{proof}
Let $\mu$ and $\nu$ be as in the statement and let $\bar\mu$ be an ergodic joining of them. 
Define 
\begin{equation}\label{eq:def_n(x,y)}
    n^{(x,y)}(\bj)=\begin{cases}
0&\text{if $x_{\bj}\not\in \cL(X_0)$;}\\
\max\{n\in\N_0\colon x|_{\tau_n^y(\bj)}\in\cL(X_0)\}&\text{otherwise}.
\end{cases}
\end{equation}
Note that since $\mu$ is ergodic and $\mu$ is not supported on $X_0$, $n^{(x,y)}$ is
almost surely finite.
We then define the tiling $\tau$ by
\[
\tau^{(x,y)}(\bj)=\tau_{n^{(x,y)}(\bj)}^y(\bj)\text{ for all $(x,y)$ and $\bj$}.
\]
That is, $\tau^{(x,y)}(\bj)$ is the largest dyadic tile (as determined by $y$) for
which the corresponding $x$-block belongs to $\cL(X_0)$.
To see that this is a tiling, let $(x,y)\in X\times Y$, let $\bj\in \Z^d$ and let $n=n^{(x,y)}(\bj)$. 
First notice that $\bj\in\tau^{(x,y)}(\bj)$ for each $\bj\in\Z^d$,
so that the tiles cover $\Z^d$. 
Next note that if $\bk\in\tau^{(x,y)}(\bj)$, then
$\tau_{n}^y(\bj)=\tau_{n}^y(\bk)$, so that $\tau_m^y(\bj)=\tau_m^y(\bk)$ for all $m\ge n$.
From the definition of $n^{(x,y)}$, it follows that $n^{(x,y)}(\bk)=n^{(x,y)}(\bj)$,
so that $\tau^{(x,y)}(\bk)=\tau^{(x,y)}(\bj)$. This ensures that distinct tiles do not
overlap. The three properties then follow by
construction of $\tau^{(x,y)}$.
\end{proof}

\subsection{Potentials that freeze on a given subshift}
We will prove Theorem A, which, as stated above, follows as a consequence of a more detailed theorem. 
The following proposition provides sufficient conditions for a potential to freeze on a given subshift.
\begin{prop}\label{lem:special-class}
Let $X_0$ be a proper subshift of $X$ and $\phi\colon X\to\R$ a continuous potential
such that
\begin{enumerate}
\item $\phi\vert_{X_0}\equiv 0$ and $\phi(x)<0$ for all $x\notin X_0$;\label{prop_one}
\item $p_\phi(1)=\ent_{\topo}(X_0)$;\label{prop_two}
\item The set of equilibrium states for $\phi$ is exactly the set of measures of maximal entropy for $X_0$.
\label{prop_three}
\end{enumerate}
Then there exists $\beta_c\in (0,1]$ such that 
$\phi$ has a freezing phase transition at $\beta_c$.
For $\beta>\beta_c$, $\ES(\beta\phi)$ is the set of measures of maximal entropy on $X_0$;
while for $\beta<\beta_c$, $\mu(X_0)=0$ for all $\mu\in\ES(\beta\phi)$. 
In particular, if $X_0$ is the smallest subshift containing the support of all 
measures of maximal entropy on $X_0$, then $\phi$ freezes on $X_0$ at $\beta_c$.

\end{prop}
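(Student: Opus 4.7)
The plan is to pin down the shape of the pressure function $p_\phi$ from the three hypotheses and read the freezing phase transition off directly.

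First I would establish that $p_\phi\equiv \ent_{\topo}(X_0)$ on $[1,\infty)$. The universal lower bound $p_\phi(\beta)\ge\ent_{\topo}(X_0)$ holds for every $\beta$ by plugging any measure of maximal entropy on $X_0$ into the variational principle (using $\phi|_{X_0}\equiv 0$). For $\beta\ge 1$, the upper bound follows from $\phi\le 0$: for every invariant $\mu$, $\ent(\mu)+\beta\sint\phi\dd\mu\le \ent(\mu)+\sint\phi\dd\mu\le p_\phi(1)=\ent_{\topo}(X_0)$ using hypothesis~\ref{prop_two}. Setting $\beta_c:=\inf\{\beta\ge 0\colon p_\phi(\beta)=\ent_{\topo}(X_0)\}$, the previous step gives $\beta_c\le 1$, while $p_\phi(0)=\log|\cA|>\ent_{\topo}(X_0)$ (a proper subshift of a full shift has strictly smaller topological entropy, by a standard counting argument avoiding any forbidden pattern) yields $\beta_c>0$. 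By convexity and continuity of $p_\phi$, every $\beta\in[\beta_c,1]$ lies in a convex combination of $\beta_c$ and $1$, so $p_\phi(\beta)\le\ent_{\topo}(X_0)$; combined with the universal lower bound, $p_\phi\equiv\ent_{\topo}(X_0)$ on $[\beta_c,\infty)$ while $p_\phi>\ent_{\topo}(X_0)$ on $[0,\beta_c)$.

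Next I would identify $\ES(\beta\phi)$ on each side of $\beta_c$. For $\beta>\beta_c$, let $\mu\in\ES(\beta\phi)$ and set $c:=\sint\phi\dd\mu\le 0$. Picking any $\beta'\in[\beta_c,\beta)$, the chain $\ent(\mu)+\beta'c\le p_\phi(\beta')=p_\phi(\beta)=\ent(\mu)+\beta c$ forces $(\beta'-\beta)c\le 0$, hence $c\ge 0$ and therefore $c=0$; since $\phi<0$ off $X_0$ by hypothesis~\ref{prop_one}, this gives $\mu(X_0)=1$, and then $\ent(\mu)=\ent_{\topo}(X_0)$, so $\mu$ is a measure of maximal entropy on $X_0$. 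The reverse inclusion is immediate. For $\beta<\beta_c$, suppose $\mu\in\ES(\beta\phi)$ has $p:=\mu(X_0)>0$. If $p=1$, then $\ent(\mu)+\beta\sint\phi\dd\mu=\ent(\mu)\le\ent_{\topo}(X_0)<p_\phi(\beta)$, a contradiction. Otherwise decompose $\mu=p\mu_1+(1-p)\mu_2$ along the shift-invariant set $X_0$; by affineness of $\nu\mapsto\ent(\nu)+\beta\sint\phi\dd\nu$ on invariant measures, together with $\sint\phi\dd\mu_1=0$,
\[
p_\phi(\beta)=p\,\ent(\mu_1)+(1-p)\Bigl(\ent(\mu_2)+\beta\sint\phi\dd\mu_2\Bigr)\le p\,\ent_{\topo}(X_0)+(1-p)\,p_\phi(\beta),
\]
which rearranges to $p_\phi(\beta)\le\ent_{\topo}(X_0)$, contradicting $p_\phi(\beta)>\ent_{\topo}(X_0)$. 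Hence $\mu(X_0)=0$.

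The two characterizations give disjoint sets of equilibrium states, so $\ES(\beta\phi)\ne\ES(\beta'\phi)$ whenever $\beta<\beta_c<\beta'$, while $\ES(\beta\phi)$ is constant on $(\beta_c,\infty)$: by Definition~\ref{def:FPT}, $\phi$ has a freezing phase transition at $\beta_c$. For the ``in particular'' clause, the characterization at $\beta>\beta_c$ immediately forces $\ZTA(\phi)$ to equal the (weak-$\ast$ closed) set of measures of maximal entropy on $X_0$, so the smallest subshift containing the supports of measures in $\ZTA(\phi)$ coincides with that for measures of maximal entropy on $X_0$, which is $X_0$ itself under the stated hypothesis; hence $\phi$ freezes on $X_0$ at $\beta_c$. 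The main subtlety is the convexity/sliding argument pinning down $c=0$ for $\beta>\beta_c$, combined with the use of affineness of entropy under decomposition by a shift-invariant set for $\beta<\beta_c$; neither step is individually difficult, but the edge case $p=1$ in the latter deserves explicit treatment.
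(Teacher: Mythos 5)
Your proof is correct, and its skeleton coincides with the paper's: both establish $p_\phi\equiv\ent_{\topo}(X_0)$ on $[1,\infty)$ from $\phi\le 0$ together with hypotheses \eqref{prop_one}--\eqref{prop_two}, define $\betac$ as the infimum of the set where the pressure equals $\ent_{\topo}(X_0)$, and get $0<\betac\le 1$ from $p_\phi(0)=\log|\cA|>\ent_{\topo}(X_0)$. Where you diverge is in how the equilibrium states are identified on each side of $\betac$. The paper outsources this to Proposition~\ref{prop:freezing=slant-asymptote} and Remark~\ref{rem:ESdisjoint} (hence ultimately to Proposition~\ref{prop:affine-parts-pressure} and Theorem~\ref{thm:folklore} on $\ZTA(\phi)$), whereas you argue directly: the sliding-$\beta'$ comparison forcing $\sint\phi\dd\mu=0$, hence $\mu(X_0)=1$, is a clean in-line substitute for the route through $\ZTA(\phi)$, and it incidentally shows that hypothesis \eqref{prop_three} is not needed for the conclusion at $\beta>\betac$ (it only matters when $\betac=1$, to control $\ES(\betac\phi)$ itself, which neither proof addresses). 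For $\beta<\betac$ your decomposition $\mu=p\mu_1+(1-p)\mu_2$ along the invariant set $X_0$, using affineness of $\nu\mapsto\ent(\nu)+\beta\sint\phi\dd\nu$, is actually more explicit than the paper's one-line appeal to ergodicity (equilibrium states need not be ergodic; the paper implicitly relies on the ergodic decomposition of an equilibrium state consisting of equilibrium states, which your argument avoids). The one point worth stating rather than gesturing at is $p_\phi(\betac)=\ent_{\topo}(X_0)$, needed before your convexity step on $[\betac,1]$; it follows from continuity of $p_\phi$ and the definition of the infimum, so this is cosmetic rather than a gap.
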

\begin{proof}
Since $\phi\le 0$, $p_\phi(\beta)$ is non-increasing, so for $\beta\geq 1$ 
$p_\phi(\beta)\le p_\phi(1)=\ent_{\topo}(X_0)$. If $\mu$ is any measure
of maximal entropy on $X_0$, then since $\phi\vert_{X_0}\equiv 0$,
we have $\ent(\mu)+\beta\int\phi\dd\mu=\ent_{\topo}(X_0)$, so that 
$p_\phi(\beta)=\ent_{\topo}(X_0)$ for any $\beta\ge 1$.

On the other hand, for $\beta=0$ we have 
$p_\phi(\beta\phi)=p_\phi(0)=\ent_{\topo}(X)>\ent_{\topo}(X_0)$, since $X_0$ is a proper subshift 
of $X$. Let $\beta_c=\inf\{\beta\colon p_\phi(\beta)=\ent_{\topo}(X_0)\}$, so that $0<\beta_c\le 1$.
By Proposition \ref{prop:freezing=slant-asymptote}, $\phi$ has a freezing phase transition at $\beta_c$.
By Remark \ref{rem:ESdisjoint}, $\ES(\beta\phi)$ is the set of measures of maximal entropy
on $X_0$ for $\beta>\beta_c$.

Hence, in the case that $X_0$ is the smallest subshift containing the supports of all measures
of maximal entropy on $X_0$, we have shown $\phi$ freezes on $X_0$ at $\beta_c$. 

Finally, if $0<\beta<\beta_c$, then $p_\phi(\beta)>\ent_{\topo}(X_0)$
so $\ES(\beta\phi)$ does not contain any measures supported on $X_0$. 
By ergodicity, for $\mu\in\ES(\beta\phi)$, $\mu(X_0)=0$. 

\end{proof}

\begin{figure}[htb!]
\centering
\includegraphics[scale=.25]{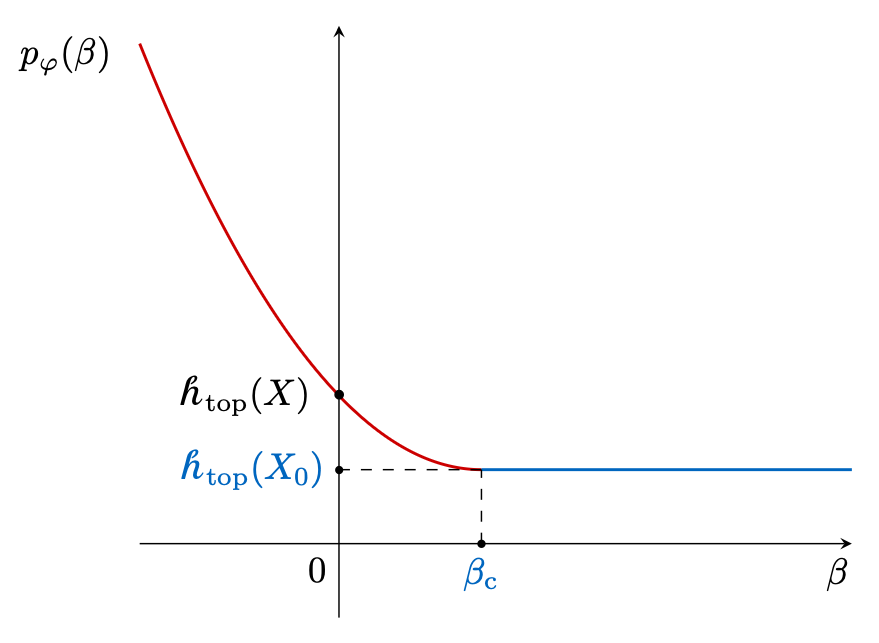}
\caption{Freezing on a proper subshift $X_0$.}
\label{fig:fpt-on-X0}
\end{figure}

As the next theorem shows, it is indeed possible to construct potentials satisfying the assumptions of
the previous proposition, with a rather precise control on their variation.

\begin{thm}\label{thm:existence}
  Suppose $X=\cA^{\Z^d}$ is a d-dimensional full shift on a finite alphabet $\cA$ and $X_0$ is a proper subshift of $X$. Define
\begin{equation}\label{eq:def_phi}
  \phi(x)=
\begin{cases}        
0 & \text{if} \quad x\in X_0,\\
-a_j & \text{if} \quad \dist(x,X_0)= 2^{-j},
\end{cases}
\end{equation}
where $(a_j)$ is a decreasing sequence of positive numbers, converging to zero and satisfying
the condition
\[
a_{2^{i+1}}\ge \left(\frac{\log n_i}{2^{id}}-\ent_\mathrm{top}(X_0)\right)+\frac{2\log^+i+3}{2^{id}}\text{ for all $i\in\N_0$},
\]
where $n_i$ denotes the number of admissible $2^i\times\dots\times 2^i$ blocks in $X_0$ and $\log^+(\cdot)=\max(\log x,0)$.
Then $\phi$ satisfies the conditions of Proposition \ref{lem:special-class}.
\end{thm}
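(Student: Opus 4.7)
The plan is to verify the three hypotheses of Proposition~\ref{lem:special-class}. Hypothesis~\ref{prop_one} is immediate from the definition \eqref{eq:def_phi}, and continuity of $\phi$ follows from $a_j\to 0$: if $\dist(x,y)\le 2^{-(j+1)}$ then either $\dist(x,X_0)=\dist(y,X_0)$ (so $\phi(x)=\phi(y)$) or both distances exceed $2^{-j}$ (so $|\phi(x)|,|\phi(y)|\le a_j$). Hypotheses~\ref{prop_two} and~\ref{prop_three} then reduce to establishing the single inequality
\[
\ent(\mu)+\int\phi\dd\mu \;\le\; \ent_\topo(X_0) \quad \text{for every shift-invariant }\mu,
\]
with equality if and only if $\mu$ is a measure of maximal entropy on $X_0$. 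Granted this, $p_\phi(1)=\ent_\topo(X_0)$ is attained by any such maximal measure (since $\phi|_{X_0}\equiv 0$), and the characterization of $\ES(\phi)$ follows by ergodic decomposition.

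For $\mu$ supported on $X_0$ the inequality is trivial, so the substantive step is strict inequality for an ergodic $\nu$ with $\nu(X_0)=0$. The strategy is to form an ergodic joining $\bar\mu$ of $\nu$ with the unique invariant measure on the $\Z^d$-dyadic odometer and apply the Dyadic Tiling Lemma to obtain the equivariant tiling $\tau$. Set $f_n=\bar\mu(\{(x,y):n^{(x,y)}(\mathbf 0)=n\})$; since $\nu(X_0)=0$, $n^{(x,y)}(\mathbf 0)$ is a.s.\ finite, so $(f_n)_{n\ge 0}$ is a probability distribution on $\N_0$ whose values give the $\bar\mu$-a.s.\ limiting densities $f_n/2^{nd}$ of tiles of side $2^n$.

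Two estimates drive the argument. First, an \emph{entropy bound}: since the odometer carries zero entropy, $\ent(\nu)=\ent(\bar\mu)$, and conditioning on the (essentially deterministic) tile boundaries and applying property~(2) of Lemma~\ref{lem:tiling} restricts each size-$2^n$ tile ($n\ge 1$) to one of $n_n$ admissible patterns in $\cL(X_0)$, yielding
\[
\ent(\nu)\;\le\; f_0\log|\cA|+\sum_{n\ge 1}\frac{f_n}{2^{nd}}\log n_n.
\]
Second, an \emph{integral bound} from property~(3): the parent dyadic region $\tau^y_{n+1}(\bj)$ of side $2^{n+1}$ of any level-$n$ tile is not in $\cL(X_0)$, and a direct box-inclusion calculation shows that for every $\bj$ in such a tile the window $\bj+\statbox{2^{n+1}-1}$ contains this parent region; hence $\phi(\sigma^\bj x)\le -a_{2^{n+1}}$ by monotonicity of $(a_j)$, giving
\[
\int\phi\dd\nu\;\le\;-\sum_{n\ge 0}f_n\,a_{2^{n+1}}.
\]

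Combining these with the hypothesized lower bound on $a_{2^{n+1}}$ produces, for $n\ge 1$,
\[
\frac{\log n_n}{2^{nd}}-a_{2^{n+1}}-\ent_\topo(X_0)\;\le\;-\frac{2\log^+ n+3}{2^{nd}},
\]
together with the analogous estimate at $n=0$ (absorbing $\log(|\cA|/n_0)$ into the additive slack of $3$). Summing against $f_n$ and using $\sum_n f_n=1$ yields
\[
\ent(\nu)+\int\phi\dd\nu-\ent_\topo(X_0) \;\le\; -\sum_{n\ge 0}f_n\cdot\frac{2\log^+ n+3}{2^{nd}} \;<\;0,
\]
the required strict inequality. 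The main anticipated obstacle is making the entropy bound fully rigorous: because $\tau$ depends on both $x$ and $y$, the reduction to tile-by-tile counting must be justified via a careful Shannon--McMillan-type argument in which the tile statistics are identified with $(f_n)$ in the ergodic limit. A minor subtlety is the $n=0$ slot, where the level-$0$ letter need not lie in $\cL_{\{\mathbf 0\}}(X_0)$ so that the $\log|\cA|$ bound must be reconciled with the $\log n_0$ appearing in the hypothesis; this is handled by exploiting the stronger penalty $\phi=-a_0$ at all level-$0$ sites.
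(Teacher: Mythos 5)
Your overall architecture (joining with the dyadic odometer, invoking the Dyadic Tiling Lemma, separate entropy and integral estimates, and the normalization $\sum_n f_n=1$ with $f_n=2^{nd}\bar\mu(E_n)$) is exactly the paper's, and your integral bound $\int\phi\dd\nu\le-\sum_n f_n\,a_{2^{n+1}}$ is correct and matches Lemma~\ref{lem:phiest}. The genuine gap is in your entropy estimate: the bound
\[
\ent(\nu)\;\le\; f_0\log|\cA|+\sum_{n\ge1}\frac{f_n}{2^{nd}}\log n_n
\]
is false, not merely in need of a more careful Shannon--McMillan argument. The tile boundaries are \emph{not} ``essentially deterministic'': the candidate dyadic grids are determined by the zero-entropy odometer coordinate, but the actual tile \emph{sizes} depend on $x$ and are not measurable with respect to the lexicographic past, so they carry entropy that must be paid for in the encoding. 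Concretely, take $d=1$, $\cA=\{0,1\}$, $X_0=\{0^\infty\}$ (so $n_n=1$ for all $n$) and $\nu$ the uniform Bernoulli measure; then $\ent(\nu)=\log 2$ while $f_0=\tfrac34$, so your bound would assert $\log 2\le\tfrac34\log 2$. The correct estimate (Lemma~\ref{lem:hest}) is
\[
\ent(\nu)\;<\;\sum_{n\ge0}\frac{f_n}{2^{nd}}\bigl(3+2\log^+n+\log n_n\bigr),
\]
where the extra $3+2\log^+n$ arises from bounding the entropy $-\bar\mu(E)\sum_n p_n\log p_n$ of the tile-size partition ($p_n=\bar\mu(E_n)/\bar\mu(E)$), via the dichotomy $p_n<1/n^2$ versus $p_n\ge1/n^2$.

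This error propagates into your final step: the term $(2\log^+n+3)/2^{nd}$ in the hypothesis on $a_{2^{n+1}}$ is not spare slack producing a strictly negative sum; it is there precisely to absorb the tile-size entropy. With the corrected entropy bound, each summand of
\[
\sum_{n\ge0}f_n\Bigl(\tfrac{3+2\log^+n+\log n_n}{2^{nd}}-\ent_{\topo}(X_0)-a_{2^{n+1}}\Bigr)
\]
is only $\le 0$, and the required strict inequality $\ent(\nu)+\int\phi\dd\nu<\ent_{\topo}(X_0)$ must instead come from the strictness in the entropy lemma itself. Your remark about the $n=0$ slot is a real but minor point; the decisive missing ingredient is the encoding cost of the tile sizes.
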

\begin{rem}\label{rem:freezing-Phi}
An interaction $\Phi$ can be associated with the $\phi$ given in
\eqref{eq:def_phi}, namely
\[
\Phi_{\statbox n}(x)=\begin{cases}
    a_{n+1}-a_n&\text{if $x|_{\statbox n}\not\in\cL_{\statbox n}(X_0)$;}\\
    0&\text{otherwise.}
\end{cases}
\]
This interaction lies in the space $\mathcal{B}$ (see~\eqref{BigBanachSpace} for the definition), but clearly not in the space $\mathcal{S}$ of absolutely summable interactions (see~\eqref{SmallBanachSpace}). Indeed, as we demonstrate, absolute summability precludes the possibility of a freezing phase transition (see Theorem~B' above).
\end{rem}


 
Notice that $\frac{\log n_i}{2^{id}}-\ent_\text{top}(X_0)$ is non-increasing and converges to 0 by sub-multiplicativity of the pattern-counting function 
and the definition of topological entropy; and also $(2\log^+i+3)/2^{id}$ decreases to 0.
If we define $(a_j)$ by
\[
a_j=\left(\frac{\log n_i}{2^{id}}-\ent_\text{top}(X_0)\right)+\frac{2\log^+i+3}{2^{id}}\text{\quad when $2^i<j\le 2^{i+1}$}
\]
as above, then we note that $\var_j\phi= a_j$. In particular, we obtain 
$$
\var_j\phi=\kappa_{\lfloor\log_2j\rfloor}+O\left(\frac{\log\log j}{j^d}\right),
$$
where $\kappa_i=\frac1{2^{id}}\log n_i-\ent_\text{top}(X_0)$.
That is, $\kappa_i$ measures the rate of convergence of $|\ergbox{2^i}|^{-1}\log |\cL_{\ergbox{2^i}}(X_0)|$ to $\ent_\text{top}(X_0)$. 

For subshifts generated by substitutions, we have the following estimate.

\begin{corollary}\label{cor:entropy_gap}
    For a constant-box $\Z^d$-substitution shift $X_0$ there is a potential $\phi$ of
    the form of (\ref{eq:def_phi}) with $\var_j\phi=O\big(\frac{\log j}{j^d}\big)$ such that $\phi$ has a 
    freezing phase transition on $X_0$.
\end{corollary}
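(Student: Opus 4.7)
The plan is to apply Theorem \ref{thm:existence} with the threshold sequence $(a_j)$ described right after that theorem, and to estimate its decay using the polynomial pattern complexity enjoyed by constant-box substitution subshifts.

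The key input is the following classical fact: for a (primitive) constant-box $\Z^d$-substitution subshift $X_0$ with inflation factor $L$, the pattern-counting function grows at most polynomially, so there exist constants $C,D>0$ with
\[
n_i := |\cL_{\ergbox{2^i}}(X_0)| \le C(2^i)^D \quad \text{for every } i\ge 0.
\]
This follows from the inflation structure: choosing $k$ so that $L^k \ge 2\cdot 2^i$, every admissible $2^i \times \cdots \times 2^i$ pattern occurs as a subpattern of some $\theta^k(a)$, and the number of starting positions inside such an inflated image is polynomial in $2^i$. In particular $\ent_\topo(X_0) = 0$ and $\log n_i = O(i)$.

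Choose $(a_j)$ to be the threshold sequence
\[
a_j = \kappa_i + \frac{2\log^+ i + 3}{2^{id}} \quad \text{for } 2^i < j \le 2^{i+1},
\]
with $\kappa_i = (\log n_i)/2^{id} - \ent_\topo(X_0) = (\log n_i)/2^{id}$. The polynomial bound gives $\kappa_i = O(i/2^{id})$. Since $i = \lfloor \log_2 j \rfloor$ and $2^{id} \asymp j^d$ in the range $2^i < j \le 2^{i+1}$, we obtain $\kappa_i = O(\log j / j^d)$, while the correction term is $O(\log\log j / j^d) = o(\log j / j^d)$. Therefore $\var_j\phi = a_j = O(\log j / j^d)$, which is the claimed modulus-of-continuity bound.

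By Theorem \ref{thm:existence}, the resulting $\phi$ satisfies conditions (\ref{prop_one})--(\ref{prop_three}) of Proposition \ref{lem:special-class}, and so $\phi$ has a freezing phase transition at some $\beta_c \in (0,1]$, with $\ES(\beta\phi)$ equal to the set of measures of maximal entropy on $X_0$ for all $\beta > \beta_c$. For a primitive substitution the subshift $X_0$ is minimal and uniquely ergodic, so the unique invariant measure has support equal to all of $X_0$; in particular $X_0$ itself is the smallest subshift containing the supports of measures of maximal entropy on $X_0$, and the last clause of Proposition \ref{lem:special-class} yields that $\phi$ freezes on $X_0$. The only delicate point in the argument is making the polynomial complexity bound precise; the exponent $D$ is immaterial since only $\log n_i = O(i)$ enters the calculation, but an easily citable source for the $d$-dimensional constant-shape case is less standard than Pansiot's one-dimensional result.
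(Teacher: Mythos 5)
Your proposal is correct and follows essentially the same route as the paper: apply Theorem \ref{thm:existence} with the explicit threshold sequence, and use the polynomial growth of the pattern-counting function of a constant-box substitution shift to get $\kappa_i=O(i/2^{id})$ and hence $a_j=O(\log j/j^d)$. One small imprecision in your sketch of the complexity bound: a $2^i\times\cdots\times 2^i$ pattern sitting near the boundary of the $k$-fold inflated blocks is \emph{not} in general a subpattern of a single $\theta^k(a)$; it can straddle up to $2^d$ adjacent inflated blocks, and the paper's proof counts exactly this way (the pattern is determined by the $2^d$ symbols that were $k$-times substituted together with the offset of the base point), which still yields the polynomial bound, so your conclusion stands. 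Your closing observation that unique ergodicity of a primitive substitution guarantees the ``smallest subshift'' hypothesis of Proposition \ref{lem:special-class} is a welcome extra step that the paper's proof leaves implicit.
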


Observe that for any $d$, if $\var_j\phi$ is as in the corollary, 
then $\sum_j j^{d-1} \var_j\phi=\infty$  (compare with Theorem B).

\begin{proof}
It is known \cite[Theorem 7.17]{Robinson} that a $\Z^d$-substitution
dynamical system obtained from a substitution rule where each symbol is mapped to a $m_1\times m_2\times\cdots\times m_d$ box
has a word complexity function given by $|\cL_{\ergbox n}(X_0)|=O(n^{\log(m_1\cdots m_d)/\log\min m_j})$. 
Since \cite{Robinson} does not include a proof but refers to an unpublished thesis, we include a proof of this below. 
Given the estimate, we deduce
\[
\kappa_i=\frac 1{2^{id}}\log O(2^{i(\log(m_1\cdots m_d)/\log\min m_j)})=O(i/2^{id}).
\]
It follows that $a_n=O\big(\log n/n^d\big)$ as claimed.

The bound for the number of words may be obtained as follows. 
Let $m=\min(m_1,\ldots,m_d)$ and write $m_j=m^{a_j}$ for each $j$ so that $a_j=\log m_j/\log m$ and let $Q=a_1+\cdots+a_d$.
Now given $n$, let $k$ be the smallest natural number such that $m^k\ge n$, that is $k=\lceil \log n/\log m\rceil$.
If $x$ is any point in $X_0$ and $\bq\in\Z^d$, then $x_\bq$ lies in the $k$-fold substitution of some symbol $a$.
The word $x_{\bq+\ergbox n}$ may intersect up to $2^d$ $k$-fold substitution boxes, since each such box has dimensions
at least $n$ in each direction. If $\boldsymbol{o}$ is the origin of the box containing $x_{\bq}$, the $2^d$ boxes  that may be
intersected are based at $\boldsymbol{o}+\epsilon_1m_1^k\boldsymbol{e}_1+\cdots +\epsilon_d\,m_d^k\boldsymbol{e}_d$ where
$\epsilon_i\in\{0,1\}$ for each $i$ and $\mathbf e_i$ is the $i$th standard basis vector.
Thus $x_{\bq+\ergbox n}$ is fully 
determined by the choice of the symbols that were $k$-times substituted in the $2^d$ boxes 
and the difference $\bq-\mathbf o$. 
Hence we obtain the bound $|\cL_{\ergbox n}(X_0)|\le |A|^{2^d}(m_1\cdots m_d)^k
=|A|^{2^d}(m^Q)^k=|A|^{2^d}\big(m^k\big)^Q$. Since $m^k<mn$, we deduce $|\cL_{\ergbox n}(X_0)|\le|A|^{2^d}m^Qn^Q$,
so that $|\cL_{\ergbox n}(X_0)|=O\big(n^Q\big)$ as claimed.
\end{proof}

\begin{proof}[Proof of Theorem \ref{thm:existence}]
Let $\phi$ be a potential on $X$ as described in the statement of the theorem. Since $\phi\vert_{X_0}=0$, 
if $\mu_0$ is a measure of maximal entropy on $X_0$, then $\ent(\mu_0)+\int \phi\dd\mu_0=\ent_\text{top}(X_0)$. 
To complete the proof, we show that if $\mu$ is any ergodic measure not supported on $X_0$, then
$\ent(\mu)+\int \phi\dd\mu<\ent_\text{top}(X_0)$. 

Let $\mu$ be an ergodic measure on $X$ such that the support of $\mu$ does not lie in $X_0$ (so that by ergodicity, $\mu(X_0)=0$). 
We let $\bar\mu$ be an ergodic joining of $\mu$ and $\nu$ where $\nu$ is the unique invariant measure on the 
dyadic $\Z^d$ odometer. By Lemma \ref{lem:tiling}, there exists a $\bar\mu$-a.e.\ defined
equivariant tiling $\tau^{(x,y)}$ of $\Z^d$ satisfying the properties given in that lemma.

Let $E=\{(x,y)\colon \lexmin \tau^{(x,y)}(\mathbf 0)=\mathbf 0\}$. 
For each $i\in\N_0$, 
let $E_i=\{(x,y)\in E\colon n^{(x,y)}(\mathbf 0)=i\}$ with $ n^{(x,y)}(\mathbf 0)$ as defined in (\ref{eq:def_n(x,y)}),
let $n_i=|\cL_{\ergbox{2^i}}(X_0)|$ denote the number of 
legal $2^i\times \cdots\times 2^i$-blocks in $\cL(X_0)$, and
let $W^i_1,\ldots ,W^i_{n_i}$ be an enumeration of these blocks.
We let $E_{i,j}=\{(x,y)\in E_i\colon x\vert_{\ergbox{2^i}}=W^i_j\}$.
We now define three partitions:
\begin{align*}
\cP_0&=\{E,E^\comp\};\\
\cP_1&=\{E^\comp,E_0,E_1,\ldots\};\text{ and}\\
\cP_2&=\{E^\comp,E_{i,j}\colon i\in\N_0,\ 1\le j\le n_i)\}.
\end{align*}
Let $p_i=\bar\mu(E_i)/\bar\mu(E)$. Clearly, $\sum_{i=0}^\infty p_i=1$. 
We claim that additionally $\bar\mu(E)\sum_i 2^{di}p_i=1$. 
To see this second equality, notice that since $\tau^{(x,y)}$ is almost surely a tiling of $\Z^d$,
the sets $\{T^{\bj}(E_i)\colon \bj\in\ergbox{2^i}\}$ form a partition (up to sets of measure 0) of $X\times Y$. 
Summing the measures of the sets, we obtain
$1=\sum_{i=0}^\infty 2^{di}\bar\mu(E_i)=\bar\mu(E)\sum_{i=0}^\infty 2^{di}p_i$.
In the next two lemmas, we estimate $\ent(\mu)$ and $\int\phi\dd\mu$
in terms of the quantities we have introduced.

\begin{lem}\label{lem:hest}
Let $\mu$ be an ergodic invariant measure on $X$ that is not supported on $X_0$. With the notation defined above,
\begin{align*}
\ent(\mu)&
<\bar\mu(E)\sum_{i=0}^\infty p_i(3+2\log^+ i+\log n_i).
\end{align*}
\end{lem}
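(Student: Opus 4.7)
The strategy is to bound $\ent(\mu)$ via an Abramov-style formula applied to the equivariant tiling $\tau$ from Lemma~\ref{lem:tiling}. First, I would reduce to bounding $\ent(\bar\mu)$: since $\bar\mu$ is an ergodic joining of $\mu$ with $\nu$ and $\nu$ has zero entropy (the odometer is a rotation-type action), the standard inequalities $\ent(\mu)\le\ent(\bar\mu)\le \ent(\mu)+\ent(\nu)$ collapse to $\ent(\mu)=\ent(\bar\mu)$. Note that the identity $\bar\mu(E)\sum_i 2^{di}p_i=1$ verified just above is Kac's relation for the tiling: the average tile volume equals $1/\bar\mu(E)$.

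The main tool I would invoke is an Abramov-type upper bound
\[
\ent(\bar\mu)\le \bar\mu(E)\,H_{\bar\mu_E}(\cP_2|_E),
\]
where $\bar\mu_E=\bar\mu|_E/\bar\mu(E)$ and $\cP_2|_E$ has atoms $\{E_{i,j}\}$. This extends the classical estimate $\ent(T,\cP)\le \mu(A)H_{\mu_A}(\cP|_A)$ from induced $\Z$-actions to $\Z^d$-actions with a variable-volume equivariant tiling, and can be justified either by adapting the Aaronson--Weiss \cite{AW}/Meyerovitch \cite{Meyerovitch} framework already used in the Dyadic Tiling Lemma or by a direct combinatorial estimate: the Birkhoff theorem gives $N_{i,j}\sim |\ergbox N|\bar\mu(E_{i,j})$ tiles of type $(i,j)$ inside $\ergbox N$, and each typical atom of $\cP_2^{\ergbox N}$ has $\bar\mu$-mass essentially $\prod_{i,j}p_{i,j}^{N_{i,j}}$, producing the stated bound as $N\to\infty$. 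The chain rule for entropy then yields
\[
H_{\bar\mu_E}(\cP_2|_E)=H(\{p_{i,j}\})\le H(\{p_i\})+\sum_{i=0}^{\infty}p_i\log n_i,
\]
since the conditional entropy over the level-$i$ fiber has at most $n_i$ atoms.

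Finally, I would estimate the one-dimensional entropy $H(\{p_i\})=-\sum p_i\log p_i$ by Gibbs' inequality with the reference distribution $q_i=6/(\pi^2(i+1)^2)$:
\[
H(\{p_i\})\le -\sum_i p_i\log q_i=\log(\pi^2/6)+2\sum_{i=0}^{\infty}p_i\log(i+1).
\]
Since $\log(i+1)\le \log^+ i+1$ for every $i\ge 0$ and $\log(\pi^2/6)+2<3$, this gives $H(\{p_i\})<3+2\sum_i p_i\log^+ i$. Assembling all pieces produces
\[
\ent(\mu)=\ent(\bar\mu)<\bar\mu(E)\sum_{i=0}^{\infty}p_i(3+2\log^+ i+\log n_i),
\]
as required. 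The principal obstacle is justifying the Abramov-type bound in the middle step: for $d=1$ it is a direct consequence of the classical induced-transformation theory, but for $d\ge 2$ the induced dynamics on $E$ does not carry a natural $\Z^d$-action when tile volumes vary, so one must either invoke a generalization for equivariant tilings or perform the direct atom-counting estimate sketched above. The remaining steps are elementary entropy manipulations.
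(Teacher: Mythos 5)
Your architecture is essentially the paper's: reduce to $\ent(\bar\mu)$ using that the odometer has zero entropy, bound this by $\bar\mu(E)$ times the entropy of the partition $\{E_{i,j}\}$ of $E$, split that into $H(\{p_i\})+\sum_i p_i\log n_i$, and estimate $H(\{p_i\})$ (your Gibbs-inequality step with $q_i\propto (i+1)^{-2}$ is a clean equivalent of the paper's case split on $p_i\lessgtr 1/i^2$ and yields the same constants). Moreover, the ``Abramov-type'' inequality $\ent(\bar\mu)\le\bar\mu(E)\,H_{\bar\mu_E}(\cP_2|_E)$ that you flag as the principal obstacle is \emph{exactly} what the paper proves: since $\cP_0\prec\cP_1\prec\cP_2$, the chain rule gives $\Ent_{\bar\mu}(\cP_1|\cP_0)+\Ent_{\bar\mu}(\cP_2|\cP_1)=\Ent_{\bar\mu}(\cP_2|\cP_0)=\bar\mu(E)H_{\bar\mu_E}(\cP_2|_E)$, so the paper's displayed bound is your inequality in decomposed form.

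The gap is in how you would justify that middle step. The paper's mechanism, which resolves your stated obstacle without inventing an induced $\Z^d$-action, is: write $\ent(\bar\mu)=\Ent_{\bar\mu}(\cP_2|\cB^-)$ with $\cB^-=\bigvee_{\bj<_{\mathrm{lex}}\mathbf 0}T^{\bj}\cP_2$ (the lexicographic-past formula for $\Z^d$-entropy, using that the translates of $\cP_2$ determine $x_{\mathbf 0}$), expand by the chain rule into $\Ent_{\bar\mu}(\cP_0|\cB^-)+\Ent_{\bar\mu}(\cP_1|\cB^-\vee\cP_0)+\Ent_{\bar\mu}(\cP_2|\cB^-\vee\cP_1)$, drop $\cB^-$ from the last two terms, and observe that $\Ent_{\bar\mu}(\cP_0|\cB^-)=0$ because whether a tile starts at the origin, and the whole tiling structure, is determined by the lexicographic past. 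This last observation is the real content, and it replaces Abramov/Kac entirely. By contrast, your sketched direct-counting justification is not correct as written: the mass of a typical atom of $\cP_2^{\ergbox N}$ is not ``essentially $\prod_{i,j}p_{i,j}^{N_{i,j}}$'' unless the tile labels were independent across tiles, which nothing guarantees. A counting argument can be salvaged (bound the \emph{number} of atoms carrying a fixed fraction of the mass by a multinomial-times-$\prod n_i^{N_i}$ estimate and invoke Shannon--McMillan--Breiman for $\Z^d$), but as stated the step does not go through, and the conditional-entropy route above is both shorter and rigorous.
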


\begin{lem}\label{lem:phiest}
Let $\mu$ be an ergodic invariant measure on $X$ that is not supported on $X_0$. With the notation defined above,
\begin{align*}
    \int \phi\dd\mu\le-\bar\mu(E)\sum_{i=0}^\infty p_i\,2^{id}a_{2^{i+1}}.
\end{align*}
\end{lem}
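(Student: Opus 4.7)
The plan is to use the $\bar\mu$-almost-everywhere partition $\{T^{\bj}(E_i)\colon i\in\N_0,\ \bj\in\ergbox{2^i}\}$ of $X\times Y$ recalled just before the lemma, together with $T$-invariance of $\bar\mu$, to decompose $\int\phi\dd\mu=\int\phi(x)\dd\bar\mu(x,y)$ into pieces indexed by $(i,\bj)$. Under the change of variable $(x,y)\mapsto T^{\bj}(x,y)$, each piece becomes
\[
\int_{T^{\bj}(E_i)}\phi(x)\dd\bar\mu=\int_{E_i}\phi(\sigma^{\bj}x)\dd\bar\mu,
\]
so the lemma will reduce to proving the pointwise estimate $\phi(\sigma^{\bj}x)\le -a_{2^{i+1}}$, uniformly for $(x,y)\in E_i$ and $\bj\in\ergbox{2^i}$.

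This pointwise bound is the heart of the argument and is purely geometric. On $E_i$ the tile at $\mathbf 0$ is $\tau_i^y(\mathbf 0)=\ergbox{2^i}$, which is contained in the next-larger dyadic tile $\tau_{i+1}^y(\mathbf 0)$; that larger box has $\ell^\infty$-diameter $2^{i+1}-1$, so every $\bj\in\ergbox{2^i}\subseteq\tau_{i+1}^y(\mathbf 0)$ satisfies
\[
\tau_{i+1}^y(\mathbf 0)\subseteq \bj+\statbox{2^{i+1}-1}.
\]
Lemma \ref{lem:tiling}(\ref{it:maximal}) gives $x\vert_{\tau_{i+1}^y(\mathbf 0)}\notin\cL(X_0)$, and since any pattern extending a non-admissible one is itself non-admissible, this will force $(\sigma^{\bj}x)\vert_{\statbox{2^{i+1}-1}}\notin\cL(X_0)$. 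Unwinding the definition of $\dist$ then yields $\dist(\sigma^{\bj}x,X_0)=2^{-m}$ with $m\le 2^{i+1}-1$, and the monotonicity of $(a_j)$ gives $\phi(\sigma^{\bj}x)=-a_m\le -a_{2^{i+1}}$, as desired.

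Integrating the pointwise inequality over $E_i$ and summing over the $2^{id}$ sites $\bj\in\ergbox{2^i}$ then produces the contribution $-2^{id}a_{2^{i+1}}\bar\mu(E_i)$ to $\int\phi\dd\mu$; a final sum over $i$, together with the substitution $\bar\mu(E_i)=\bar\mu(E)\,p_i$, delivers the claimed bound. The only step that requires real care is the geometric containment together with the index bookkeeping: one must match the side length $2^{i+1}$ of the $(i+1)$-level dyadic tile with the centered $\ell^\infty$ metric on $X$, and monotonicity of $(a_j)$ is essential so that the bound can be stated uniformly in terms of $a_{2^{i+1}}$ rather than an $(x,y,\bj)$-dependent index $a_m$.
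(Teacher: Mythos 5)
Your proposal is correct and follows essentially the same route as the paper: both decompose $\int\phi\dd\mu=\int\phi\dd\bar\mu$ over the partition $\{T^{\bj}E_i\}$, derive the pointwise bound $\phi\le -a_{2^{i+1}}$ from the maximality property (3) of the Dyadic Tiling Lemma together with monotonicity of $(a_j)$, and sum using $\bar\mu(E_i)=p_i\bar\mu(E)$. The only cosmetic difference is that you shift the point via a change of variables on $E_i$ while the paper works directly on the translated sets $T^{\bj}E_i$; your geometric containment $\tau_{i+1}^y(\mathbf 0)\subseteq\bj+\statbox{2^{i+1}-1}$ and the resulting index bookkeeping check out.
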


We now complete the proof of Theorem \ref{thm:existence}, deferring the proof of the lemmas until afterwards.

Summing the two contributions from the lemmas, we see 
\begin{align*}
&\ent(\mu)+\int\phi\dd\mu< \bar{\mu}(E)\sum_{i=0}^\infty p_i\big(3+2\log^+i+\log n_i-2^{id}a_{2^{i+1}}\big)\\
&=\ent_\text{top}(X_0)+\bar{\mu}(E)\!\sum_{i=0}^\infty p_i\big(3+2\log^+i + \log n_i
-2^{id}\ent_\text{top}(X_0)-2^{id}a_{2^{i+1}}\big).
\end{align*}
By hypothesis, all the terms in the sum are non-positive, so that
$\ent(\mu)+\int\phi\dd\mu<\ent_\text{top}(X_0)$ for any ergodic $\mu$ that
is not supported on $X_0$. 
\end{proof}

\begin{proof}[Proof of Lemma \ref{lem:hest}]
Notice that $\cP_0\prec\cP_1\prec \cP_2$. Let $\cQ_X=\{[a]\times Y\colon a\in\cA\}$ be the partition according to the symbol $x_{\mathbf 0}$ and let $\cB_X=\bigvee_{\bj\in \Z^d}T^\bj\cQ_X$. Since $\bar\mu$ is a joining of $\mu$ and another measure, we have $\ent(\bar\mu,\cQ_X)=\ent(\mu)$. 
We claim that the elements of $\cQ_X$ (and hence of $T^\bj\cQ_X$ and hence of $\cB_X$)
agree with elements of $\bigvee_{\bj\in\Z^d}T^\bj\cP_2$ up to measure 0 sets.
For $\bar\mu$-a.e. $(x,y)$, $\bigvee_{\bj\in\Z^d}T^\bj\cP_0$ determines the tiling $\tau^{(x,y)}$;
and $\bigvee_{\bj\in\Z^d}T^\bj\cP_2$ determines $x_{\mathbf 0}$.
We define $\cB^-=\bigvee_{\bj<_\text{lex}\mathbf 0}T^\bj\cP_2$,
where $<_\text{lex}$ refers to the standard lexicographic (total) ordering on $\Z^d$. 

By the above, we see $\ent(\mu)\le \ent(\bar\mu,\cP_2)\le \ent(\bar\mu)\le \ent(\mu)$,
where the last inequality comes from that fact that $Y$ has zero topological entropy. 
Hence we see $\ent(\mu)=\ent(\bar\mu)=\Ent_{\bar\mu}(\cP_2|\cB^-)$.

We now estimate
\begin{equation}\label{eq:hcpts}
\begin{split}
\ent(\mu)
&=\Ent_{\bar\mu}(\cP_2|\cB^-)\\
&=\Ent_{\bar\mu}(\cP_0|\cB^-)+\Ent_{\bar\mu}(\cP_1|\cB^-\vee \cP_0)+\Ent_{\bar\mu}(\cP_2|\cB^-\vee\cP_0\vee\cP_1)\\
&\le \Ent_{\bar\mu}(\cP_0|\cB^-)+\Ent_{\bar\mu}(\cP_1|\cP_0)+\Ent_{\bar\mu}(\cP_2|\cP_1).
\end{split}
\end{equation}

We observe that $\cP_0$ is $\cB^-$ measurable, as the locations and sizes of the tiles in $\tau^{(x,y)}$ starting in the lexicographic past determine whether or not a new tile is to start at the origin. 
In particular, we deduce
\[
\Ent_{\bar\mu}(\cP_0|\cB^-)=0.
\]
A standard estimate shows
\[
\Ent_{\bar\mu}(\cP_1|\cP_0)=-\bar\mu(E)\sum_{i=0}^\infty p_i\log p_i,
\]
as the partition $\cP_1$ is obtained from $\cP_0$ by splitting a set of measure $\bar\mu(E)$
into pieces that have conditional measures $p_0,p_1,\ldots$. 
For $i=0,1$, we have $-p_i\log p_i\le \frac{1}{\e}$, the maximum value of $-x\log x$.
For $i\ge 2$, we partition the terms into those $i$'s for which $p_i<\frac 1{i^2}$
and those for which $p_i\ge \frac 1{i^2}$. In the first case, since $-x\log x$ is increasing on $\big[0,\frac{1}{\e}\big]$, $-p_i\log p_i\le -\frac 1{i^2}\log\frac 1{i^2}=\frac 2{i^2}\log i$.
In the second case, we have $-\log p_i\le 2\log i$. Hence we have
\begin{align*}
    \Ent_{\bar\mu}(\cP_1|\cP_0)&\le\bar\mu(E)\left(\frac 2\e+\sum_{i\ge 2,\, p_i<1/i^2}\frac {2\log i}{i^2}+2\sum_{i\ge 2,\, p_i\ge1/i^2}p_i\log i\right)\\
    &\le \bar\mu(E)\left(\frac 2\e+\sum_{i\ge 2}\frac{2\log i}{i^2}+2\sum_{i\ge 2}p_i\log i\right)\\
    &< \bar\mu(E)\left(3+2\sum_{i\ge 2}p_i\log i\right)\\
    &=\bar\mu(E)\sum_{i\ge 0}p_i(3+2\log^+i).
\end{align*}
Similarly, we have
\[
\Ent_{\bar\mu}(\cP_2|\cP_1)\le \sum_{i=0}^\infty \bar\mu(E_i)\log n_i
=\bar\mu(E)\sum_{i=0}^\infty p_i \log n_i,
\]
since $\cP_2$ is obtained by splitting each $E_i$, a set of measure $\bar\mu(E_i)=p_i\bar\mu(E)$, into $n_i$ 
pieces. 
    
Substituting the bounds in \eqref{eq:hcpts}, we obtain
\[
\ent(\mu)<\bar\mu(E)\sum_{i=0}^\infty p_i(3+2\log^+ i + \log n_i),
\]
which completes the proof of the lemma.
\end{proof}

\begin{proof}[Proof of Lemma \ref{lem:phiest}]
Define a function $\bar\phi$ on $X\times Y$ by
$\bar\phi(x,y)=\phi(x)$. As observed above,
$\{T^\bj E_i\colon i\in\N_0,\ \bj\in\ergbox{2^i}\}$ forms a countable
partition of $X\times Y$. If $(x,y)\in T^\bj E_i$ for some 
$\bj\in\ergbox{2^i}$ then
$\bj\in\tau^{(x,y)}(\mathbf 0)=\tau^y_i(\mathbf 0)$.
By property \eqref{it:maximal}, 
$x|_{\tau^y_{i+1}(\mathbf 0)}\not\in\cL(X_0)$. It follows that $\dist(x,X_0)\ge 2^{-2^{i+1}}$ so that
$\bar\phi(x,y)=\phi(x)\le -a_{2^{i+1}}$.
Hence, we see
\begin{align*}
    \int\phi\dd\mu&=\int\bar\phi\dd\bar\mu\\
    &\le-\sum_i\sum_{\bj\in\ergbox {2^i}}a_{2^{i+1}}\bar\mu(T^{\bj} E_i)\\
    &=-\bar\mu(E)\sum_i p_i2^{id}a_{2^{i+1}}.
\end{align*}
This completes the proof.
\end{proof}

\section{Obstruction to freezing phase transitions}

\subsection{Proof of Theorem B}

In this section we show that the equilibrium states for potentials with summable variation are 
fully supported, and then use that to prove Theorem B.

\begin{thm}\label{nogo-thm}
Let $X$ be the full shift $\cA^{\Z^d}$ and let $\phi\colon X\to\R$ be a continuous potential satisfying
\begin{equation}\label{cond-summability}
\sum_{n=1}^\infty n^{d-1}\var_n\phi<\infty.
\end{equation}
Then any equilibrium state for $\phi$ is fully supported (every cylinder set has a strictly positive measure).
\end{thm}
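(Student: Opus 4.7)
The plan is to show that under the summability hypothesis, every equilibrium state satisfies a DLR-type Gibbs property, from which full support follows immediately because Gibbs conditional weights are strictly positive.

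First, I would use the summability to give rigorous meaning to the formal ``boundary energy'' difference
\[
\Delta_\Lambda(x,y) := \sum_{\bk\in\Z^d}\bigl(\phi(\sigma^\bk x)-\phi(\sigma^\bk y)\bigr)
\]
for any finite $\Lambda\subset\Z^d$ and any $x,y\in X$ agreeing on $\Lambda^\comp$. For $\bk\notin\Lambda$ at $\ell^\infty$-distance $m$ from $\Lambda$, the summand is bounded by $\var_m\phi$, and for a cube $\Lambda_n$ the number of such sites at distance $m$ is $O((n+m)^{d-1})$. The hypothesis $\sum_m m^{d-1}\var_m\phi<\infty$ (which in particular forces $\sum_m \var_m\phi<\infty$, since $m^{d-1}\ge 1$ for $m\ge 1$) makes the tail series converge, while the terms with $\bk\in\Lambda$ contribute only a trivial $O(|\Lambda|\,\|\phi\|_\infty)$ amount. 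Thus $\Delta_\Lambda(x,y)$ is well-defined and uniformly bounded in the choice of $x|_\Lambda$ and $y|_\Lambda$.

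Next, I would show that any equilibrium state $\mu$ for $\phi$ satisfies the DLR equations with respect to the specification $\gamma_\Lambda(u\mid y)\propto\exp(-\Delta_\Lambda(y^{(u)},y))$, where $y^{(u)}$ is obtained from $y$ by overwriting $y|_\Lambda$ with $u$. The natural route is a perturbation argument: if the $\mu$-conditional distribution on $\Lambda$ given $\cF_{\Lambda^\comp}$ disagreed with $\gamma_\Lambda$ on a positive-measure set of boundary conditions $y$, then resampling, on each cell of a coarse partition of $\Z^d$ into translates of $\Lambda$, from the Gibbs conditional instead of the $\mu$-conditional, and then shift-averaging, would produce a shift-invariant measure with strictly larger $h(\cdot)+\int\phi\,d\cdot$, contradicting equilibrium. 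Alternatively, one may invoke a Lanford--Ruelle-type statement adapted to the potential-level summability at hand.

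Finally, from the DLR equation $\mu([u])=\int\gamma_\Lambda(u\mid y)\,d\mu(y)$ one concludes $\mu([u])>0$ for every cylinder, since $\gamma_\Lambda(u\mid y)$ is uniformly bounded below by a strictly positive constant by the first step. The main obstacle is the middle step: the classical Lanford--Ruelle theorem is formulated for absolutely summable interactions $\sum_{\Lambda\ni 0}\|\Phi_\Lambda\|_\infty<\infty$, which is strictly stronger than the potential-level condition $\sum_n n^{d-1}\var_n\phi<\infty$ (this gap is precisely what allows Theorem A to produce freezing examples while Theorem B' rules them out in the interaction-summable regime). So one either needs to carry out the perturbation argument directly, taking care that the free-energy gain and the boundary errors are of the correct orders, or to revisit the Lanford--Ruelle proof under the weaker potential-level regularity assumed here.
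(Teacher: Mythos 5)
Your strategy is genuinely different from the paper's: the proof in the paper never touches Gibbs states or conditional distributions, whereas you route everything through a DLR property. In fact the paper explicitly describes your route in a remark immediately after the theorem: under $\sum_n n^{d-1}\var_n\phi<\infty$ one can show (Keller's book, Chapter~5) that Gibbs states defined at the level of the potential $\phi$ charge every cylinder, and that equilibrium states are Gibbs in this sense, whence full support. Your first step (convergence and uniform boundedness of $\Delta_\Lambda(x,y)$ for $x,y$ agreeing off a box) is correct and is exactly the computation the summability hypothesis is designed for --- it is the same counting of sites at distance $m$ from a cube that the paper uses elsewhere --- and your last step is immediate from the uniform lower bound on the conditional weights. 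So if you are willing to quote Keller for the middle step, the argument closes, and it is a legitimate alternative proof.

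As a self-contained argument, however, the proposal has a real gap at exactly the point you flag: you have not established that equilibrium states satisfy the DLR equations for your specification, and the classical Lanford--Ruelle theorem does not apply, since absolute summability of an interaction is strictly stronger than the potential-level hypothesis here (and there is no canonical way to attach a summable interaction to $\phi$). The resampling-and-shift-averaging scheme you sketch is not routine: one must quantify the strict gain in $\ent(\cdot)+\int\phi\,d(\cdot)$ obtained by replacing the $\mu$-conditionals with the Gibbs conditionals on a sparse grid of boxes, control the energy exchanged between neighbouring boxes, and check that shift-averaging preserves the gain; that is essentially the content of the Lanford--Ruelle proof itself and is comparable in difficulty to the whole theorem. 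The paper's proof is more economical precisely because it never tries to identify conditional distributions: it takes a single $\mu$-null pattern $W$, overwrites independent copies of $W$ into realizations of $\mu$ at density $\delta$, and shows the entropy gain $\delta|\log\delta|$ beats the energy cost $O(\delta)$, the summability hypothesis entering only to bound the energy cost of each overwrite by a constant. (A minor point: with the paper's sign conventions, where one maximizes $\ent+\int\phi$, your Gibbs weights should be proportional to $\exp(+\Delta_\Lambda)$ rather than $\exp(-\Delta_\Lambda)$; this does not affect positivity.)
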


For a sketch of the proof, if $\mu$ is a shift-invariant probability measure on $X$ which 
is not fully supported, we use $\mu$ to construct another measure $\tilde\mu$ on $X$ by occasionally 
inserting blocks that lie outside the support of $\mu$ to ensure that
$\ent(\tilde\mu)+\int\phi\dd\tilde\mu>\ent(\mu)+\int \phi\dd\mu$.
This implies that $\mu$ is not an equilibrium state of the potential $\phi$.

\begin{proof}
Let $\mu$ be an ergodic invariant measure that is not fully supported. 
We let $W$ be a block of symbols such that $\mu([W])=0$, where $[W]$ denotes
the cylinder set $\{x\colon x|_\Lambda=W\}$ and $\Lambda$ is the 
set of coordinates of symbols appearing in $W$. By extending $W$ if necessary,
we may assume $\Lambda=\ergbox n$ for some $n$.
We show that $\mu$
is not an equilibrium state for $\phi$ by constructing a shift-invariant measure $\tilde\mu$
such that
\[
\ent(\tilde\mu)+\int\phi\dd\tilde\mu>\ent(\mu)+\int \phi\dd\mu.
\]

Informally, $\tilde\mu$ will be constructed by starting from a realization, $x$, of $\mu$
and overwriting blocks in $x$ with copies of the pattern $W$ at random and with very low frequency
to obtain a new point $z$. Since $W$'s do not occur in $x$, any $W$ that occurs in $z$ occurs as a result
of this overwriting process. This allows one to recover from $z$ the original point $x$ (up to regions that were
overwritten with $W$'s), as well as the locations where the overwriting took place.
There may be some ambiguity in identifying the overwriting locations because when a $W$ is inserted
it is possible that some of the symbols of the inserted $W$ together with some of the existing symbols in $x$
may make additional copies of $W$. 
We show that the entropy of $\tilde\mu$ is bounded below by the entropy of $\mu$ \emph{plus} the entropy of
the overwriting process minus the entropy of the parts of $x$ that are overwritten and with a correction
for the ambiguity in identifying the overwrite locations. If the overwriting process has frequency $\delta$,
the entropy of $\tilde\mu$ will be $\ent(\mu)+\delta|\log\delta|-O(\delta)$. Similarly we will show
$\int \phi\dd\tilde\mu\ge \int \phi\dd\mu-O(\delta)$. Combining these, we will see that for small $\delta>0$,
the desired inequality holds. 

More precisely, let $\delta>0$ be a parameter and let $\nu_0$ be the Bernoulli measure on $Y_0=\{0,1\}^{\Z^d}$
where 1's occur with frequency $\delta$. We define a continuous shift-commuting map $\Psi\colon Y_0\to Y_0$ by
\[
\Psi(y)_{\bv}=\begin{cases}
    1&\text{if $y_\bv=1$; and $y_\bu=0$ for all $\bu$ with $0<\|\bu-\bv\|_\infty<n$}\\
    0&\text{otherwise}.
\end{cases}
\]
That is, $\Psi$ removes 1's that are closer than $n$ apart, this distance being chosen to ensure that overwritten
$W$'s don't overlap. We then define $\nu$ to be $\Psi_*(\nu_0)$. Since $\Psi$ is shift-commuting,
$\nu$ is another invariant measure on $Y_0$, and is still mixing as it is a factor of a mixing measure. We claim that $\dbar(\nu,\nu_0)\le (2n)^d\delta^2$, which is intuitively unsurprising since this is an upper bound for
the probability that $\Psi$ changes a 1 to a 0. 

Recall that the Ornstein's $\dbar$-distance between measures $\nu$ and $\nu_0$ is given by
$\dbar(\nu,\nu_0)=\inf_{\bar{\nu}} \int \1_{\{y_{\mathbf 0}\ne y_{\mathbf 0}'\}}\dd\bar{\nu}(y,y')$, where $\bar{\nu}$ runs over all joinings of $\nu$ and $\nu_0$; see {\em e.g.} \cite{Glasner}. 
To estimate $\dbar(\nu,\nu_0)$, we define a joining on $Y_0\times Y_0$ by introducing the
map $\bar{\Psi}:Y_0\to Y_0\times Y_0$ given by $\bar \Psi(y)=(y,\Psi(y))$ and letting $\bar\nu=\bar\Psi_*(\nu_0)$. 
This measure is then a joining of $\nu_0$ in the first coordinate and $\nu$ in the second. Further, one can see that
$y_{\mathbf 0}$ and $\Psi(y)_{\mathbf 0}$ (i.e., the zeroth coordinates of the two points in $\bar\Psi(y)$) only differ if $y_{\mathbf 0}=1$
and there exists $\bu$ with $0<\|\bu\|_\infty<n$ such that $y_{\bu}=1$. The probability of this is
less than $(2n)^d\delta^2$ as claimed. This shows that $\dbar(\nu,\nu_0)\le (2n)^d\delta^2$. 
Standard estimates show $|\ent(\nu)-\ent(\nu_0)|\le -\dbar(\nu,\nu_0)\log \dbar(\nu,\nu_0)-\big(1-\dbar(\nu,\nu_0))\log(1-\dbar(\nu,\nu_0)\big)$. 
This gives $\ent(\nu)\ge \ent(\nu_0)-O(\delta^2|\log\delta|)$. We then obtain the bound
\[
\ent(\nu)\ge \delta|\log\delta|+O(\delta).
\] 
Although we don't need the more refined estimate, we note that in fact $\ent(\nu)\ge \delta|\log\delta|+\delta-O(\delta^2|\log\delta|)$.

We let $Y$ denote the range of $\Psi$: the set of $y\in \{0,1\}^{\Z^d}$ such that if $y_\bu=y_\bv=1$ and $\bu\ne\bv$,
then $\|\bu-\bv\|_\infty\ge n$. 
We now define a shift-commuting map $\Xi\colon X\times Y\to X$
(that is, $\Xi(\sigma^\bv x,\sigma^\bv y)=\sigma^\bv\Xi(x,y)$ for all $\bv\in\Z^d$)
by
\[
\Xi(x,y)_\bv=\begin{cases}
    W_\bj&\text{if there exists $\bj\in\ergbox n$ such that $y_{\bv-\bj}=1$;}\\
    x_\bv&\text{otherwise.}
\end{cases}
\]
That is, $\Xi(x,y)$ replaces all blocks in $x$ starting at positions where $y_\bu=1$ by copies of $W$.
The fact that $y\in Y$ ensures that $\Xi(x,y)$ is well-defined as there cannot be two distinct $\bj$'s in $\ergbox n$
for which $y_{\bv-\bj}=1$.
Since $\nu$ is mixing, $\mu\times\nu$ is an ergodic invariant measure with respect to the action $\sigma^\bv(x,y):=
(\sigma^\bv x,\sigma^\bv y)$. We let $\tilde\mu=\Xi_*(\mu\times\nu)$, the invariant measure obtained by 
editing realizations from $\mu$ using the ``instructions" from an independent realization of $\nu$. 

We complete the proof by giving lower bounds for $\ent(\tilde\mu)$ and $\int \phi\dd\tilde\mu$. 
Let $\bar X=X\times Y\times X$ and let $\bar\Xi\colon X\times Y\to  X\times Y\times X$ be given by
$\bar\Xi(x,y)=(x,y,\Xi(x,y))$. The push-forward, $\bar\mu=\bar\Xi_*(\mu\times\nu)$ is a joining
of $\mu$, $\nu$ and $\tilde\mu$ where the first two coordinates are independent. 
We introduce a family of partitions of $\bar X$: $\cP_X=\{[a]\times Y\times X\colon a\in \cA\}$,
$\cP_Y=\{X\times [0]\times X,X\times [1]\times X\}$ and $\cP_Z=\{X\times Y\times [a]\colon a\in \cA\}$.
We also define $\cP_{XYZ}=\cP_X\vee \cP_Y\vee \cP_Z$ and $\cP_{XY}=\cP_X\vee\cP_Y$.
The $\cP_{XYZ}$ is a generating partition, so that $\ent(\bar\mu)=\ent(\bar\mu,\cP_{XYZ})$. 
Since $\bar\Xi$ is an isomorphism (its inverse is given by forgetting the third coordinate), we see that
$\ent(\bar\mu)=\ent(\mu\times\nu)=\ent(\mu)+\ent(\nu)$.
By properties of joinings 
\begin{align*}
    \ent(\bar\mu,\cP_Z)&=\ent(\tilde\mu)\\
    \ent(\bar\mu,\cP_X)&=\ent(\mu)\\
    \ent(\bar\mu,\cP_{XY})&=\ent(\mu)+\ent(\nu),
\end{align*}
where we used independence in the third equality. 
(For background on entropy, we refer to \cite{Keller}.)
We let $\cF_X$, $\cF_Y$, $\cF_Z$, $\cF_{YZ}$ and so
on be the $\sigma$-algebras generated by $\cP_X$, $\cP_Y$, $\cP_Z$, $\cP_{YZ}$ etc. 
We then use conditional entropy to write
\[
\ent(\bar\mu)=\ent(\bar\mu,\cP_Z)+\ent(\bar\mu,\cP_Y|\cF_Z)+\ent(\bar\mu,\cP_X|\cF_{YZ}).
\]
This yields
\begin{equation}\label{eq:hlowerbd}
\begin{split}
\ent(\tilde\mu)&=\ent(\bar\mu,\cP_Z)\\
&=\ent(\mu)+\ent(\nu)-\ent(\bar\mu,\cP_Y|\cF_Z)-\ent(\bar\mu,\cP_X|\cF_{YZ})\\
&\ge \ent(\mu)+\delta|\log\delta|+O(\delta)-\ent(\bar\mu,\cP_Y|\cF_Z)-\ent(\bar\mu,\cP_X|\cF_{YZ}).
\end{split}
\end{equation}
We have $\ent(\bar\mu,\cP_Y|\cF_Z)\le \Ent_{\bar\mu}(\cP_Y|\cF_Z)$. 
Note that for $\bar\mu$-a.e.\ $(x,y,z)$, if $z_{|\ergbox n}\ne W$, then $y_{\mathbf 0}=0$,
so that $I_{\bar\mu}(\cP_Y|\cF_Z)=0$ on the $\cF_Z$-measurable set $[W]^\comp$.
Since $\#\cP_Y=2$, it follows that
$\Ent_{\bar\mu}(\cP_Y|\cF_Z)\le \tilde\mu([W])\log 2$. Since each $1$ in the $Y$-layer
can create no more than $(2n-1)^d$ $W$'s 
in the $Z$-layer (each $W$ created must overlap
with the $W$ that is rewritten), we see $\tilde\mu([W])\le (2n-1)^d\nu([1])<(2n-1)^d\delta$. 
Hence 
\[
\ent(\bar\mu,\cP_Y|\cF_Z)\le (2n-1)^d\delta\log 2=O(\delta). 
\]
Similarly,
$\ent(\bar\mu,\cP_X|\cF_{YZ})\le \Ent_{\bar\mu}(\cP_X|\cF_{YZ})$.
Let $A=\bigcup_{\bj\in\ergbox n}\sigma^{\bj}(X\times [1]\times X)$, that is
$A$ is the $\cF_Y$-measurable event that $(x,y,z)$ contains a 1 in the $(-\ergbox n)$-box, which is also the event that 
$z_{\mathbf 0}$ is part of a block that was overwritten from $x$. 
We have that for $\bar\mu$-a.e.\ $(x,y,z)$, if $(x,y,z)\in A^\comp$, then $z_{\mathbf 0}=x_{\mathbf 0}$, and
$x_{\mathbf 0}$ is $\cF_{YZ}$-measurable, so
that $I_{\bar\mu}(\cP_X|\cF_{YZ})=0$ on $A^\comp$. Meanwhile, $I_{\bar\mu}(\cP_X|\cF_{YZ})\le\log\#\cA$ on $A$
so that 
$\Ent_{\bar\mu}(\cP_X|\cF_{YZ})\le \bar\mu(A)\log\#\cA$.
Since $\bar\mu(A)=n^d\nu([1])<n^d\delta$, we obtain
\[
\ent(\bar\mu,\cP_X|\cF_Y\vee\cF_Z)\le n^d\delta\log\#\cA=O(\delta).
\]
Substituting in \eqref{eq:hlowerbd}, we obtain
\begin{equation}\label{eq:htildemulower}
\ent(\tilde\mu)\ge \ent(\mu)+\delta|\log\delta|-O(\delta).
\end{equation}

To bound $\int \phi\dd\tilde\mu$ from below, we estimate $|\int \phi\dd\tilde\mu-
\int\phi\dd\mu|$. Since $\bar\mu$ is a joining of $\mu$, $\nu$ and $\tilde\mu$, we have
\begin{align*}
\left|\,\int\phi\dd\tilde\mu-\int\phi\dd\mu\,\right|&=
\left|\,\int (\phi\circ\pi_Z-\phi\circ\pi_X)\dd\bar\mu\,\right|\\
&\le \int |\,\phi\circ\pi_Z-\phi\circ\pi_X|\dd\bar\mu.
\end{align*}
Recall that for $\bar\mu$-a.e.\ $(x,y,z)\in\bar X$, the only differences between $x$ and $z$
occur in places where $y$ has a 1, when blocks of $x$ are overwritten by $W$'s. 
Letting $r(y)$ be
$\min\{\|\bj\|_\infty\colon y_{\bj}=1\}-(n-1)$ or 0 if that quantity is negative,
we have $\dist(x,z)\le 2^{-r(y)}$ for $\bar\mu$-a.e.\ $(x,y,z)$. 
Hence 
\begin{equation}\label{eq:diffbound}
\begin{split}
\left|\,\int\phi\dd\tilde\mu-\int\phi\dd\mu\,\right|&\le
\int \var_{r(y)}\phi\dd\bar\mu\\
&=\sum_{k=0}^\infty \bar\mu\{(x,y,z)\colon r(y)=k\}\var_k\phi\\
&=\sum_{k=0}^\infty \nu\{y\colon r(y)=k\}\var_k\phi.
\end{split}
\end{equation}
We observe that 
\begin{align*}
\{y\colon r(y)=0\}&=\bigcup_{\bj\in [1-n,n-1]^d}\sigma^{-\bj}[1]\quad\text{and}\\
\{y\colon r(y)=k\}&\subseteq \bigcup_{\|\bj\|_\infty=k+(n-1)}\sigma^{-\bj}[1]\quad\text{for $k>0$.}
\end{align*}
Hence
\begin{align*}
    \nu\{y\colon r(y)=0\}&\le(2n-3)^d\nu([1])\quad\text{and}\\
    \nu\{y\colon r(y)=k\}&\le \big((2k+2n-3)^d-(2k+2n-5)^d\big)\nu([1])\\
    &<2d(2k+2n-3)^{d-1}\nu([1])\text{\quad for $k>0$.}
\end{align*}
In the last inequality, we bounded the difference between the two cubes as the sum of the faces.
Substituting these bounds in \eqref{eq:diffbound},
we obtain
\[
\left|\,\int\phi\dd\tilde\mu-\int\phi\dd\mu\,\right|
\le C\nu([1]),
\]
where 
\[
C=(2n-3)^d\var_0\phi+\sum_{k=1}^\infty 2d(2k+2n-3)^{d-1}\var_k\phi.
\]
Since the sequence $2d(2k+2n-3)^{d-1}/k^{d-1}$ converges to $2^dd$ as $k\to\infty$,
there exists a constant $L>0$ such that $2d(2k+2n-3)^{d-1}\le Lk^{d-1}$ for each $k\ge 1$. 
It then follows that $C\le 2(2n-3)^d\|\phi\|+L\sum_{k=1}^\infty k^{d-1}\var_k\phi$, and so $C<\infty$ by the hypothesis. 
Hence 
\[
\left|\,\int\phi\dd\tilde\mu-\int\phi\dd\mu\,\right|\le C\delta,
\]
so that 
\begin{equation*}
\int\phi\dd\tilde\mu\ge \int\phi\dd\mu-O(\delta).
\end{equation*}
Combining this with \eqref{eq:htildemulower}, we see
\[
\ent(\tilde\mu)+\int \phi\dd\tilde\mu\ge \ent(\mu)+\int\phi\dd\mu+\delta|\log\delta|-O(\delta).
\]
In particular, for sufficiently small $\delta>0$, one sees
$\ent(\tilde\mu)+\int\phi\dd\tilde\mu>\ent(\mu)+\int\phi\dd\mu$, so that
$\mu$ is not an equilibrium state. 
\end{proof}

\begin{prop}\label{prop:notfullsupp}
Let $X$ be the full shift $\cA^{\Z^d}$ and let $\phi$ be a continuous potential satisfying
$$
\sum_{n=1}^\infty n^{d-1}\var_n\phi<\infty.
$$
If there exist shift-invariant probability measures $\mu$, $\nu$ such that $\int\phi\dd\mu\ne\int\phi\dd\nu$,
then there exists a cylinder set $C$ such that $\eta(C)=0$ for any $\eta\in\MM(\phi)$.
\end{prop}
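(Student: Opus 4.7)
The plan is to argue by contradiction. It suffices to show that the \emph{maximizing subshift} $X_{\max}:=\overline{\bigcup_{\eta\in\MM(\phi)}\mathrm{supp}(\eta)}$ is a proper subshift of $X$, since any cylinder contained in its open complement is then disjoint from every support and so has $\eta$-measure zero for every $\eta\in\MM(\phi)$. Because cylinders are clopen and form a basis of the topology, the desired conclusion fails iff every cylinder $C$ receives positive measure from some $\eta_C\in\MM(\phi)$; enumerating the cylinders $(C_n)_{n\ge 1}$ and using convexity of $\MM(\phi)$ together with linearity of $\nu\mapsto\int\phi\,\dd\nu$, the countable convex combination $\hat\eta:=\sum_n 2^{-n}\eta_{C_n}$ would then be a \emph{fully supported} element of $\MM(\phi)$. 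So the task reduces to ruling out the existence of such an $\hat\eta$.

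Call $x\in X$ a \emph{maximizing configuration} if
\[
\Delta(x,x'):=\sum_{k\in\Z^d}\bigl(\phi(\sigma^k x')-\phi(\sigma^k x)\bigr)\le 0
\]
for every $x'$ agreeing with $x$ off a finite set. The summability hypothesis $\sum n^{d-1}\var_n\phi<\infty$ makes this series absolutely convergent (for fixed $x,x'$) and, via dominated convergence, makes the set of maximizing configurations closed in $X$. A standard ergodic-optimization argument, modeled on the Bernoulli-insertion construction used in the proof of Theorem~\ref{nogo-thm}, shows that every $\eta\in\MM(\phi)$ is concentrated on maximizing configurations: otherwise one isolates a fixed box $\statbox R$, a pair of patterns $w\ne w'$ in $\cA^{\statbox R}$, and a positive-measure set on which the replacement $w\to w'$ raises $\Delta$ by some fixed $c>0$; executing this replacement at well-spaced Bernoulli-marked positions yields an invariant measure $\hat\eta'$ with $\int\phi\,\dd\hat\eta'>\int\phi\,\dd\hat\eta=s_\phi$, contradicting the definition of $s_\phi$. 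Since $\hat\eta$ is fully supported, its closed full-measure set of maximizing configurations is dense and hence equals $X$: \emph{every} point of $X$ is maximizing.

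The contradiction is now obtained by a \emph{hybrid-configuration} argument. By the hypothesis and ergodic decomposition, fix ergodic $\mu_1,\mu_2$ with $s:=\int\phi\,\dd\mu_1-\int\phi\,\dd\mu_2\ne 0$ and $\mu_i$-generic points $x_i$. For each $n$, define $x^{(n)}$ to agree with $x_1$ on $\ergbox n$ and with $x_2$ elsewhere; both $x_2$ and $x^{(n)}$ are maximizing and differ on the finite set $\ergbox n$, so applying the maximizing inequality to $(x_2,x^{(n)})$ and to $(x^{(n)},x_2)$ yields $\Delta(x_2,x^{(n)})=0$. On the other hand, splitting the defining sum into a deep interior of $\ergbox n$, a boundary layer of width $r(n)\to\infty$ with $r(n)/n\to 0$, and a deep exterior, and using the mean ergodic theorem for $x_1,x_2$ together with the summability of $n^{d-1}\var_n\phi$ to bound the boundary and exterior contributions by $o(n^d)$, one obtains $\Delta(x_2,x^{(n)})=n^d s+o(n^d)$, which is incompatible with $0$ for large $n$. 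The main obstacle is the middle step---upgrading maximality of the measure $\hat\eta$ to $\hat\eta$-almost-sure maximality of configurations---but I expect the Bernoulli-insertion machinery of Theorem~\ref{nogo-thm}, repurposed so as to increase $\int\phi$ rather than $\ent+\beta\int\phi$, to handle it cleanly.
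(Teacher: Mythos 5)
Your overall architecture is sound and genuinely different from the paper's, but it has a real gap at the step you yourself flag as ``the main obstacle,'' and I do not think the machinery of Theorem~\ref{nogo-thm} closes it as cleanly as you expect. In Theorem~\ref{nogo-thm} the Bernoulli insertions only require an \emph{upper} bound $|\int\phi\dd\tilde\mu-\int\phi\dd\mu|=O(\delta)$ on the total perturbation. Here you need a \emph{lower} bound on the gain, and three issues arise. First, the improving modification of a non-maximizing configuration is not a fixed pattern swap: the gain $\Delta(x,x_{\Lambda\to p})$ depends on all of $x$, so you must first make a measurable selection of a single pair $(\Lambda,p)$ and a constant $c>0$ for which $\{x\colon\Delta(x,x_{\Lambda\to p})\ge c\}$ has positive $\eta$-measure, and only perform replacements at marked sites $k$ with $\sigma^k x$ in that set. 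Second, when many replacements are performed simultaneously, the naive pairwise accumulation of interaction errors is \emph{not} controlled by $\sum n^{d-1}\var_n\phi<\infty$: summing the pairwise interaction $\sum_{r\ge D/2}O(r^{d-1})\var_r\phi$ over all other replacement sites at distances $D=L,2L,\dots$ produces a quantity of order $L^{-d}\sum_{r\ge L/2}r^{2d-1}\var_r\phi$, which can be infinite under the stated hypothesis (e.g.\ $\var_r\phi\asymp r^{-d}(\log r)^{-2}$). A correct bound exists --- attribute each site $j$ to its nearest replacement and use that all \emph{other} replacement boxes are at distance $\ge\max(L/2,d(j,k_i))$, giving a per-replacement error $O(L^d\var_{L/2}\phi)+\sum_{r>L}O(r^{d-1})\var_r\phi\to0$ --- but this Voronoi-type bookkeeping is exactly the content you are omitting. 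Third, the statement you want (maximizing measures are carried by maximizing configurations) is the Garibaldi--Thieullen subordination principle, which the paper cites for \emph{interactions} in $\mathcal S$; translating the hypothesis $\sum n^{d-1}\var_n\phi<\infty$ into that framework is itself not automatic, as the paper warns.

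For comparison, the paper's proof bypasses all of this. Using the two locality estimates that you also implicitly use in your hybrid-configuration step (the sums \eqref{eq:inbound} and \eqref{eq:outbound}, both $O(\ell^{d-1})$), it picks $\ell$ so large that the bulk gain $\delta\ell^d$ from swapping $W=u|_{\ergbox\ell}$ for $W'=u'|_{\ergbox\ell}$ dominates every boundary/context error: the swap then improves the orbit sum by at least $1$ \emph{in every context}, so replacements can be done one at a time on a generic point with no interaction analysis, and $C=[W]$ is the desired cylinder directly. In other words, the ``large box beats the boundary'' trick renders your detour through a fully supported $\hat\eta$, the density of maximizing configurations, and the (correct and rather elegant) hybrid-configuration contradiction unnecessary. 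If you want to keep your route, you must either prove the subordination principle in full (with the measurable selection and the Voronoi estimate above) or find a citable version matching the hypothesis; as written, step (b) is a genuine gap.
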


\begin{proof}
Let $\mu_1,\mu_2$ be shift-invariant probability
measures such that $\int\phi\dd\mu_1<\int\phi\dd\mu_2$. 
Let $A=\sum n^{d-1}\var_n\phi$. We prove two simple claims. 
Firstly, for any $\ell\in\N$, if $x,x'\in X$ are such that $x|_{\ergbox \ell}=x'|_{\ergbox \ell}$, we claim
\begin{equation}\label{eq:inbound}
\sum_{\bj\in \ergbox\ell} \big|\,\phi(\sigma^{\bj} x)-\phi(\sigma^\bj x')\big| \le 2\ell^{d-1}dA.
\end{equation}
To see this, notice there are less than $2d\ell^{d-1}$ $\bj$'s in $\ergbox\ell$ that 
are at any given distance $1\le r\le \lfloor \frac\ell2\rfloor$ from $(\ergbox\ell)^\comp$,
so that the left side is bounded above by 
$2d\ell^{d-1}(\var_1\phi+\cdots+\var_{\ell/2}\phi)<2d\,\ell^{d-1}A$.

Secondly, for any $\ell\in\N$, if $x,x'\in X$ are such that $x|_{{\ergbox\ell}^\comp}=x'|_{{\ergbox\ell}^\comp}$, 
and $S$ is any finite set that is disjoint from $\ergbox\ell$, we claim
\begin{equation}\label{eq:outbound}
\sum_{\bj\in S} \big|\,\phi(\sigma^{\bj} x)-\phi(\sigma^\bj x')\big|\le 2^d\,\ell^{d-1}dA.
\end{equation}
The argument is similar: the number of $\bj$ in $S$ that are at a distance $r$ from $\ergbox \ell$
is at most $(\ell+2r)^d-(\ell+2(r-1))^d<2d(\ell+2r)^{d-1}<2d(2\ell r)^{d-1}=2^d\ell^{d-1}dr^{d-1}$.

Let $\int \phi\dd\mu_2-\int \phi\dd\mu_1=\delta>0$ and let $\ell$ be such that 
$\delta\ell^d-(2^{d}+4)\ell^{d-1}dA\ge 1$. 
Since $\int\sum_{\bj\in\ergbox\ell}\phi\dd\mu_2-\int\sum_{\bj\in\ergbox \ell}\phi\dd\mu_1=\delta\ell^d$,
there exist $u,u'\in X$ with $\sum_{\bj\in\ergbox\ell}(\phi(\sigma^\bj u')-\phi(\sigma^\bj u))\ge \delta\ell^d$. 
Let $W$ be the pattern $u|_{\ergbox\ell}$ and $W'=u'|_{\ergbox\ell}$. 

We will show that $\mu([W])=0$ for any $\phi$-maximizing measure. The idea is simple: we show that replacing copies of $W$ 
with copies of $W'$ increases the sum over the orbit. If $\mu$ is any invariant measure with $\mu([W])>0$, then
by systematically replacing copies of $W$ with copies of $W'$, one obtains a measure $\mu'$ with $\int \phi\dd\mu'
>\int\phi\dd\mu$, so that $\mu$ could not have been a maximizing measure. 

Let $x,x'$ be such that $x$ and $x'$ agree outside $\ergbox\ell$; $x_{\ergbox\ell}=W$ and $x'_{\ergbox\ell}=W'$. 
Let $S$ be any finite set such that $S\supseteq \ergbox\ell$. Then we have
\begin{align*}
    &\sum_{\bj\in S}\big(\phi(\sigma^\bj x')-\phi(\sigma^\bj x)\big)\\
    &\; \ge \sum_{\bj\in \ergbox\ell}\big(\phi(\sigma^\bj x')-\phi(\sigma^\bj x)\big)
    -\sum_{\bj\in S\setminus\ergbox\ell} \big|\,\phi(\sigma^\bj x')-\phi(\sigma^\bj x)\big|.
\end{align*}
The subtracted term on the right is at most $2^d\ell^{d-1}dA$ by \eqref{eq:outbound}. 
For the first term, we have
\begin{align*}
&\sum_{\bj\in \ergbox\ell}\big(\phi(\sigma^\bj x')-\phi(\sigma^\bj x)\big)\\
&\;\ge\sum_{\bj\in\ergbox\ell}\big(\phi(\sigma^\bj u')-\phi(\sigma^\bj u)\big)\\
&\qquad -\sum_{\bj\in\ergbox\ell}\big(|\phi(\sigma^\bj u')-\phi(\sigma^\bj x')|+|\phi(\sigma^\bj u)-\phi(\sigma^\bj x)|\big)\\
&\;\ge \delta\ell^d - 4d\ell^{d-1}A.
\end{align*}
Hence we see
\[
\sum_{\bj\in S}\big(\phi(\sigma^\bj x')-\phi(\sigma^\bj x)\big)\ge \delta\ell^d - (2^d+4d)\ell^{d-1}A\ge 1.
\]
Suppose that $\mu$ is an ergodic invariant measure for which $\mu([W])>0$. 
Let $x$ be a generic point for $\mu$ so that $\frac 1{N^d}\sum_{\bj\in \ergbox N}
\phi(\sigma^\bj x)\to \int\phi\dd\mu$ and $\frac 1{N^d}\#\{\bj\in \ergbox{N-\ell+1}\colon \sigma^\bj x\in [W]\}\to\mu([W])$. 
For each $N$, we inductively modify $x$ inside $\ergbox N$ replacing $W$'s with $W'$'s one at a time
until there are no $W$'s remaining. Let the resulting point be $y^N$.
By the previous estimate, each replacement of a $W$ by a $W'$ causes the 
sum of $\phi$ over $\ergbox N$ to increase by at least 1, so that
\[
\frac 1{N^d}\sum_{\bj\in \ergbox N}\phi(\sigma^\bj y^N)\ge
\frac 1{N^d}\sum_{\bj\in \ergbox N}\phi(\sigma^\bj x)+\frac{\#(\text{replacements})}{N^d}.
\]
Since replacing one $W$ with a $W'$ can get rid of at most $(3\ell)^d$ $W$'s,
one has $\#(\text{replacements})\ge \frac 1{3\ell^d}\,\#\{\bj\in \ergbox{N-\ell+1}\colon \sigma^\bj x\in [W]\}$.
Hence 
\[
\limsup_{N\to\infty}
\frac 1{N^d}\sum_{\bj\in \ergbox N}\phi\big(\sigma^\bj y^N\big)\ge
\int\phi\dd\mu+\frac{\mu([W])}{(3\ell)^d}.
\]
Taking a subsequential limit of the empirical measures arising from translating 
the points $y^N$ by the elements of $\ergbox N$, we obtain a shift-invariant measure $\mu'$
such that $\int\phi\dd\mu'\ge \int\phi\dd\mu+\mu([W])/(3\ell)^d$, hence
$\mu$ cannot be a maximizing measure for $\phi$. 
\end{proof}

\begin{proof}[Proof of Theorem B]
We consider two cases. First, if there exist invariant measures $\mu$ and $\nu$ for 
which $\int\phi\dd\mu\ne\int\phi\dd\nu$, then Proposition \ref{prop:notfullsupp} implies that 
maximizing measures are not fully supported, while Theorem \ref{nogo-thm} implies that
equilibrium measures for $\beta\phi$ are fully supported for all $\beta\in\R$. This implies
that there is no freezing phase transition. 

Secondly, if $\int\phi\dd\mu=\int\phi\dd\nu$ for all invariant measures, then
the unique equilibrium state is the uniform Bernoulli measure on $X$ for all $\beta$, so again there
is no freezing phase transition.
\end{proof}
We remark that by a result of Bousch \cite{Bousch-Walters},
in the case $d=1$, if $\phi$ satisfies $\sum\var_n\phi<\infty$,
then $\int\phi\dd\mu$ is equal for all invariant measures if and only if $\phi$ is 
cohomologous to a constant function.

\begin{rem}
We established that if $\phi$ satisfies \eqref{cond-summability}, then the corresponding equilibrium states have full support, meaning that every cylinder set has strictly positive measure. An alternative proof exists, though it relies on the concept of Gibbs states. This follows from Keller's book \cite[Chapter 5]{Keller}, as we now outline.  Notice that our proof takes a completely different approach, relying on ergodic theoretic techniques and notably avoiding any use of Gibbs states.

Keller first defines Gibbs states associated with a potential $\phi$. (It is worth noting that the classical definition of a Gibbs state is based on an interaction $\Phi$, as in \cite{Ruelle} and \cite{Georgii}.) He then shows that these Gibbs states -- which are not necessarily shift-invariant -- assign a strictly positive measure to all cylinder sets. This result follows from a precise estimate involving the Birkhoff sum of $\phi$ and its pressure. Finally, he demonstrates that under condition \eqref{cond-summability}, the shift-invariant Gibbs states coincide with the equilibrium states -- the key point being to establish that the equilibrium states of $\phi$ are indeed Gibbs states. Hence, the fact that equilibrium states for
$\beta\phi$ have full support follows at once for any $\beta$.
\end{rem}

\subsection{Proof of Theorem B'}\label{sec:proof-theorem-B'}
The proof scheme is the same as that of Theorem B. 
 
One first proves that if
$\sum_{\Lambda\Subset \mathbb{Z}^d,\, \Lambda \ni 0} \|\Phi_\Lambda\|_\infty < \infty$, 
then the corresponding Gibbs states for $\beta\Phi$ have full support for any $\beta$ (see Section \ref{app:Gibbs} for a proof). For this class of interactions, shift-invariant Gibbs states for $\beta\Phi$ coincide with equilibrium states for $\beta\phi$ \cite[Theorem 4.2]{Ruelle}, and consequently, the latter also have full support.

Next, \cite[Theorem 2.4 (v)]{GT} establishes that if $\mu \in \MM(\phi)$, then $\mathrm{supp}(\mu) \subseteq X_\Phi$, where $X_\Phi \subseteq X$ is the subshift of maximizing configurations for $\Phi$ (that we will not
define here)\footnote{\, Note that \cite{GT} does not explicitly state that $X_\Phi$ is a subshift.}. However, Theorem \ref{thm:folklore} tells us that $\ZTA(\phi) \subseteq \MM(\phi)$. Therefore, if $X_\Phi \subsetneq X$, a freezing phase transition cannot occur since there exist at least one cylinder
set in $X\backslash X_\Phi$.  

This leaves only the degenerate case where $X = X_\Phi$. By \cite[Theorem 2.4 (iv)]{GT}, if $\nu$ is any shift-invariant measure such that $\mathrm{supp}(\nu) \subseteq X_\Phi$, then $\nu \in \MM(\phi)$. This implies that $\int \phi \dd\nu = s_\phi$ for any shift-invariant measure $\nu$, and by the variational principle, for any $\beta \in \mathbb{R}$,  
$
p(\beta\phi) = \sup_\nu \big\{ \ent(\nu) + \beta s_\phi \big\} = \log|\mathcal{A}| + \beta s_\phi.
$
Since the pressure function $\beta\mapsto p(\beta\phi)$ is affine on $\R$, no freezing phase transition can occur in this case either.\footnote{\, In fact, if $\int \phi \dd\nu = s_\phi$ for all
shift-invariant measures $\nu$, it follows from \cite[Prop. 2.34, p. 46]{vEFS} that
$\phi-s_\phi$ belongs to the closed linear span of the family of functions $\{f-f\circ \sigma^\bi: f:X\to\R\;\text{continuous},\, \bi\in\Z^d\}$.}

\begin{rem}
Two commonly-studied (Banach) spaces of 
interactions are $\mathcal S$ and $\cB$. The properties of interactions in
these two spaces are reminiscent of the properties of potentials with summable variation and that are continuous respectively. The first one is
\begin{equation}\label{SmallBanachSpace}
\mathcal{S}:=\Bigg\{\Phi: \|\Phi\|_{\mathcal{S}}:=\sum_{\substack{\Lambda\Subset \Z^d \\ \Lambda\ni 0}}\|\Phi_\Lambda\|_\infty<\infty\Bigg\}.    
\end{equation}
Unfortunately, assuming that $\Phi\in\mathcal{S}$ does not imply that $\phi$
satisfies \eqref{cond-summability}.
A bigger (Banach) space is
\begin{equation}\label{BigBanachSpace}
\mathcal{B}:=\Bigg\{\Phi: \|\Phi\|_{\mathcal{B}}:=\sum_{\substack{\Lambda\Subset \Z^d \\ \Lambda\ni 0}} |\Lambda|^{-1}\|\Phi_\Lambda\|_\infty<\infty\Bigg\}.
\end{equation}
If $\Phi\in\mathcal{B}$ then $\phi$ is continuous. The space $\mathcal{B}$ is ``pathological'' in many respects \cite{vEFS}.    
\end{rem}

\section{The one-dimensional case}\label{appendix:1d}

Theorem A gives an explicit construction of  a potential $\phi$. 
An interesting question concerns the optimality of the potential in (\ref{eq:def_phi}) 
in a sense that defining $\phi$ using a sequence which converges to zero slower than $a_j$ 
generates a freezing phase transition, but using a faster converging sequence does not. 
For a subshift $X_0\subsetneq \cA^\Z$ we know that 
\[
{\phi(x)\approx \left(\ent_{\topo}(X_0)
-\frac{\log(\#j\text{-words in }X_0)}{j}\right)-
\frac{\log \log j}{j}}\text{ if }\,{\dist(x,X_0)=\frac{1}{2^j}}
\]
has a freezing phase transition and the zero-temperature accumulation measures for $\phi$ are the measures of maximal entropy of $X_0$.
It follows from Theorem \ref{thm:one_sided_SFT} below that when $X_0$ is an SFT, the term ${\frac{\log \log j}{j}}$ can be replaced by ${\frac{1}{j}}$. Moreover, we establish this fact in the one-sided settings. 


\begin{thm}[One-sided SFT's]\label{thm:one_sided_SFT}
  Suppose $X^+=\cA^{\N}$ is a full shift and $X_0^+\subset X^+$ is a one-step subshift of finite type. 
  Then the potential $\phi^+:X^+\to\R$ defined by   
\begin{equation}\label{eq:def_one_sided_phi}
  \phi^+(x)=
\begin{cases}        
0 & \text{if} \quad x\in X_0^+,\\
-a_j & \text{if} \quad \dist(x,X_0^+)= 2^{-j},
\end{cases}
\end{equation}
  has a freezing phase transition provided $(a_j)$ is a decreasing sequence converging to 0, such that
  $a_j\ge \frac{1}{j}$ for all $j\in\N_0$. In this case, $\ZTA(\phi)$ coincides with the set of maximal entropy measures of $X_0^+$. In particular, if all transitive components of $X_0^+$ have the same entropy then $\phi$ freezes on $X_0^+$.
\end{thm}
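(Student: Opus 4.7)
Since $\phi^+\vert_{X_0^+}\equiv 0$ and $\phi^+<0$ elsewhere, the pressure $\beta\mapsto p_{\phi^+}(\beta)$ is non-increasing and continuous with $p_{\phi^+}(0)=\log|\cA|>\ent_{\topo}(X_0^+)$. It therefore suffices to show that
\[
\beta^*\;:=\;\sup_{\mu}\frac{h(\mu)-\ent_{\topo}(X_0^+)}{-\int \phi^+\,\dd\mu}
\]
(supremum over ergodic $\mu$ with $\mu(X_0^+)=0$) is finite. This will give $p_{\phi^+}(\beta)=\ent_{\topo}(X_0^+)$ for $\beta\ge\beta^*$ and a freezing phase transition at $\beta_c:=\inf\{\beta:p_{\phi^+}(\beta)=\ent_{\topo}(X_0^+)\}\in(0,\beta^*]$ exactly as in Proposition~\ref{lem:special-class}. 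The $\ZTA$ statement then follows because $\ES(\beta\phi^+)$ is constant (equal to the set of MMEs on $X_0^+$) for $\beta>\beta_c$; and the freezing-on-$X_0^+$ claim follows because when all transitive components of $X_0^+$ share the same entropy, the supports of their MMEs together cover $X_0^+$.

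\textbf{Key estimates.} For an ergodic $\mu$ with $\mu(X_0^+)=0$, pass to the natural extension on $\cA^\Z$. As $X_0^+$ is one-step, ergodicity forces $\delta:=\mu\{x:(x_0,x_1)\text{ illegal}\}>0$; decompose almost every orbit into maximal legal blocks with length distribution $\{p_L\}_{L\ge 1}$ satisfying $\delta\sum_L Lp_L=1$. (If $\cL_1(X_0^+)\subsetneq\cA$, isolated bad positions of frequency $\delta'$ appear, each contributing $-a_0\le -1$ to the potential and at most $\log|\cA|$ to the entropy; I would handle these as a parallel lower-order correction.) The plan is to code orbits via (a) the break indicator sequence $\xi_k=\mathbf{1}\{(x_k,x_{k+1})\text{ illegal}\}$ and (b) block contents, proving
\[
h(\mu)\le\ent_{\topo}(X_0^+)+\delta\,H(\{p_L\})+O(\delta).
\]
The content contribution uses the sharp one-step SFT bound $\log n_L\le L\,\ent_{\topo}(X_0^+)+C$ to get $\delta\sum_L p_L\log n_L\le\ent_{\topo}(X_0^+)+O(\delta)$; the break-process contribution is at most $\delta H(\{p_L\})$ via a renewal/Palm argument, since the stationary gap sequence has marginal $p$, hence per-arrival entropy rate at most $H(p)$, which Abramov translates to $\delta H(p)$ per time unit. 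The same small-vs-large splitting trick used in the proof of Lemma~\ref{lem:hest} (applied to $p_L<1/L^2$ vs.\ $p_L\ge 1/L^2$) yields the universal inequality $H(\{p_L\})\le 3+2\,\mathbb E_p[\log L]$, so $h(\mu)-\ent_{\topo}(X_0^+)\le 2\delta\,\mathbb E_p[\log L]+O(\delta)$. On the cost side, each block of length $L$ contributes $-\sum_{m=1}^L a_m\le-H_L\le -(\log L+\gamma)$ to the potential sum, giving
\[
-\int\phi^+\,\dd\mu\;\ge\;\delta\,\mathbb E_p[H_L]\;\ge\;\delta\,\mathbb E_p[\log L]+\gamma\delta,
\]
and the ratio $(h(\mu)-\ent_{\topo}(X_0^+))/(-\int\phi^+\dd\mu)$ is bounded by $2+O(1/\gamma)$ uniformly in $\mu$.

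\textbf{Main obstacle.} The improvement over Theorem~\ref{thm:existence} (which requires $a_j\approx\log\log j/j$) arises from two ingredients: the \emph{natural} (non-dyadic) block decomposition, which produces cost of order $H_L\approx\log L$ per block of length $L$ rather than the constant $2^i a_{2^{i+1}}=O(1)$ per dyadic level-$i$ block; and the $O(1)$ entropy defect $\log n_L-L\,\ent_{\topo}(X_0^+)$ valid at every scale for a one-step SFT. The delicate point is the entropy estimate itself: a naive partitioning analog of Lemma~\ref{lem:hest} with natural tiles produces a spurious $H(\cP_0\vert\cB^-)\le H(\delta)=O(\delta\log(1/\delta))$ term because tile starts (unlike in the dyadic construction) are no longer past-measurable. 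The renewal/Palm argument bounding the break-process entropy rate directly by $\delta H(p)$ is precisely what removes this loss and allows the threshold $a_j\ge 1/j$; without it, the naive estimate would push the required rate back up to $\log\log j/j$.
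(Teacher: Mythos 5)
Your proposal is correct and follows essentially the same route as the paper: the paper induces on $E=\{x\colon x_0x_1\notin\cL(X_0^+)\}$ and applies Abramov/Kac with the first-return-time partition, which is exactly your maximal-legal-block/renewal decomposition, and it uses the same $p_L<1/L^2$ splitting for the block-length entropy, the same $\log n_L\le L\,\ent_{\topo}(X_0^+)+O(1)$ content bound, and the same harmonic-sum lower bound $\sum_{m\le L}a_m$ for the cost of a length-$L$ block. The only (cosmetic) difference is that you bound the ratio $\beta^*$ uniformly over ergodic measures off $X_0^+$ instead of fixing the constant in $a_j$ so that the transition occurs at $\beta_c\le 1$, which is in fact a clean way to reconcile the stated hypothesis $a_j\ge 1/j$ with the constants the estimate actually produces.
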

\begin{proof} 

 Let $\mu$ be an ergodic measure which is not supported on $X_0^+$. Then $\mu$ assigns positive measure to the set $E=\{x\colon x_0x_1\not\in\cL(X_0^+)\}$. We induce on $E$. Denote by $r(x)$ the first return time to $E$, i.e., 
$r(x)=\min\{j\ge 1: \sigma^j(x)\in E\}$. We obtain the ergodic system 
$(E,{\mu}_E,{\sigma}_E)$ with corresponding potential ${\phi}_E$ where
\[
{\mu}_E=\frac{{\mu}|_E}{{\mu}(E)}, \quad \sigma_E(x,z)
=\sigma^{r(x)}(x),\quad
{\phi}_E(x)=\sum_{i=0}^{r(x)-1}{\phi^+}({\sigma}^i(x)).
\]
Then Abramov's formula and Kac's theorem imply that 
\[
\ent({\mu})+\int{\phi^+}\dd{\mu}={\mu}(E)\left(\ent({\mu}_E)+\int{\phi}_E\dd{\mu}_E\right).
\]

Let $\cQ=\{Q_j\}_{j\in\N}$ be the first return time partition of $E$, i.e., $Q_j=\{x\in E: r(x)=j\}$, and 
let $\cP$ be its further refinement into cylinder sets. Denote by $q_j={\mu}_E(Q_j)$ and by $n_j$ the 
number of words of length $j$ which are admissible in $X_0$. We have

\[
\sum_{j=1}^{\infty}q_j=1,\quad \sum_{j=1}^{\infty}jq_j=\frac{1}{{\mu}(E)},\quad
\card\{A\in \cP\colon A\subset Q_j\}\le n_j.
\]
We estimate the entropy of measure ${\mu}_E$ by the ${\mu}_E$-entropy of the partition $\cP$:
\[
\ent({\mu}_E)\le \Ent(\cP)= \Ent(\cQ)+\Ent(\cP|\cQ)\le\sum_{j=1}^{\infty}-q_j\log q_j+\sum_{j=1}^{\infty}q_j\log n_j,
\]
where in the last step we used the fact that each $\cQ_j$ contains at most $n_j$ elements of $\cP$. 
Note that when $q_j>\frac{1}{j^2}$ we have $-\log q_j<2\log j$. Therefore,
\begin{equation}\label{eq:entropy_first_return_partition}
    \begin{split}
\sum_{j=1}^{\infty}-q_j\log q_j
& =\sum_{q_j>\frac{1}{j^\stwo}}-q_j\log q_j+\sum_{q_j\le\frac{1}{j^\stwo}}-q_j\log q_j \\
& \le \sum_{q_j>\frac{1}{j^\stwo}}2q_j\log j+\sum_{q_j\le\frac{1}{j^\stwo}}-q_j\log q_j \\
& = \sum_{j=1}^\infty2q_j\log j+\sum_{q_j\le\frac{1}{j^\stwo}}(-q_j2\log j-q_j\log q_j) \\
&= \sum_{j=1}^\infty2q_j\log j+\sum_{q_j\le\frac{1}{j^\stwo}}\frac{1}{j^2}\left(j^2q_j\log \frac{1}{q_jj^2}\right)\,.
\end{split}
\end{equation}

Since the function $x\log \frac{1}{x}$ is bounded above by $\frac{1}{\e}$ and 
$\sum_{j=1}^{\infty}\frac{1}{j^2}=\frac{\pi^\stwo}{6}$, we get
\begin{equation}\label{eq:entropy_estimate}
  \ent({\mu}_E)< \sum_{j=1}^{\infty}q_j(2\log j+\log n_j)+1.
\end{equation}
We now turn our attention to $\int{\phi}_E\dd{\mu}_E$. Recall that for $x\in Q_j$ 
we have ${\phi}_E(x)=\sum_{i=0}^{j-1}\phi^+(\sigma^ix)$. We know that $x_1\dots x_{j}$ is 
admissible in $X_0^+$, but $x_1\cdots x_{j+1}$ is not admissible in $X_0^+$ since $x_{j}x_{j-}\notin\cL(X_0^+)$.  
Therefore, $\dist(x,X_0^+)= 2^{-1}$ and $\dist(\sigma^ip(x),X_0^+)\ge 2^{-j+i}$ for $0<i<j$. Hence $\sum_{i=0}^{j-1}\phi(\sigma^i x)\le-a_1-a_{j-1}-\dots -a_2= -\sum_{i=1}^j a_i$. 
This gives
\begin{equation}\label{eq:integral_estimate}
\int{\phi}_E\dd{\mu}_E =\sum_{j=1}^{\infty}\int_{Q_j}{\phi}_E\dd{\mu}_E\le  
-\sum_{j=1}^{\infty}\left(q_j\sum_{i=1}^j a_i\right).
\end{equation}
Combining the last inequality with the entropy estimate (\ref{eq:entropy_estimate}), and using that $\sum_{j=1}^\infty q_j=1$,  we obtain
\begin{align*}
   \ent({\mu}_E)+\int{\phi}_E\dd{\mu}_E 
   & < \sum_{j=1}^{\infty}q_j \Big( 2\log j+\log n_j+1-\sum_{i=1}^j a_i\Big)\\
   & = \sum_{j=1}^{\infty}jq_j\left(\frac{2\log j}{j}+\frac{\log n_j}{j}+
   \frac{1}{j}-\frac{1}{j}\sum_{i=1}^j a_i\right)\,.
\end{align*}
Since $n_j$ is the number of words of length $j$ in $X_0^+$, it follows from Perron-Frobenius theorem that $\ent_{\topo}(X_0^+)\le \frac{\log n_j}{j}\le \ent_{\topo}(X_0^+) +\frac{c}{j}$ for some constant $c>0$.
Hence, taking $a_i=\frac{2(c+2)}{i}$ and using that $\sum_{i=1}^j \frac{1}{i}>\frac12+\log j$, we obtain
\begin{align*}
& \ent({\mu}_E)+\int{\phi}_E\dd{\mu}_E\\
&<\sum_{j=1}^{\infty}jq_j\left(\frac{2\log j}{j}+\ent_{\topo}(X_0^+)+\frac{c+1}{j}-\frac{2(c+2)}{j}\Big(\frac12+\log j\Big)\right)\\
&=\sum_{j=1}^{\infty}jq_j\left( \ent_{\topo}(X_0^+)-\frac{2(c+1)\log j+1}{j}\right)\\
&<\sum_{j=1}^{\infty}jq_j\, \ent_{\topo}(X_0^+)=\frac{\ent_{\topo}(X_0^+)}{{\mu}(E)},
\end{align*}
where equality comes from Kac's Theorem. 
\end{proof}

\begin{rem} 
These methods do not carry over directly to more general one-sided shifts.
The reason is that in the case where $X_0^+$ is a one-sided shift of finite type, if $x\in X^+\setminus X_0^+$, this is  
a result of localized defects where $x_jx_{j+1}\not\in \cL(X_0^+)$ for some $j$. In particular, if $\dist(x,X_0^+)=2^{-n}$ for some $n\ge1$, then 
$\dist(\sigma x,X_0^+)=2^{-(n-1)}$. However, for a general one-sided subshift $\dist(x,X_0^+)=2^{-n}$
implies $x_0\cdots x_{n-1}\not\in\cL(X_0^+)$, but this does not imply that $x_1\cdots x_{n-1}\not\in\cL(X_0^+)$.
\end{rem}

For a general subshift $X_0^+$ of a one-sided shift $X^+$ we have the following result.

\begin{thm}[One-sided subshifts]\label{thm:super-pins}
Suppose $X^+=\cA^{\N_0}$ is a one-sided full shift and $X_0^+$ is a proper subshift of $X^+$. 
Then the potential $\phi^+:X^+\to\R$ defined by (\ref{eq:def_phi}) has a freezing phase transition with $\ZTA(\phi)$ consisting of measures of maximal entropy of $X_0^+$ provided the decreasing sequence $(a_j)$ satisfies
\[
a_{3j}\ge \frac{2\log j}{j}+\frac{1}{j}\sum_{i=0}^{\lfloor \log_2 j\rfloor}2^i\kappa_{i} +\frac{1}{j},
\] 
where $\kappa_i=\frac{\log |\cL_{2^i}(X_0^+)|}{2^i}-\ent_{\topo}(X_0^+)$.
\end{thm}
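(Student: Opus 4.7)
The plan is to verify the hypotheses of Proposition~\ref{lem:special-class}. Condition (1) is immediate from the definition of $\phi^+$, and since $\phi^+$ vanishes on $X_0^+$, every measure of maximal entropy on $X_0^+$ satisfies $\ent(\mu_0) + \int \phi^+ \dd\mu_0 = \ent_{\topo}(X_0^+)$. Therefore, it remains to show that every ergodic shift-invariant measure $\mu$ with $\mu(X_0^+)=0$ satisfies $\ent(\mu)+\int\phi^+\dd\mu<\ent_{\topo}(X_0^+)$.

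For this, I would adapt the dyadic tiling approach of Theorem~\ref{thm:existence} to the one-sided setting. Given such an ergodic $\mu$, I would form an ergodic joining $\bar\mu$ of $\mu$ with the unique invariant measure on the dyadic odometer (using the natural extension if needed) and invoke a one-sided analog of the Dyadic Tiling Lemma. This yields an equivariant tiling $\tau$ of $\N_0$ into dyadic intervals, where the tile through $\bj$ is the largest dyadic block containing $\bj$ on which $x$ restricts to a word in $\cL(X_0^+)$, while the parent dyadic block fails to lie in $\cL(X_0^+)$. Setting $E=\{(x,y)\colon \tau^{(x,y)}(0) \text{ starts at } 0\}$ and decomposing $E=\bigsqcup_k E_k$ by tile size $2^k$, with $p_k=\bar\mu(E_k)/\bar\mu(E)$, one has $\sum_k p_k=1$ and $\bar\mu(E)\sum_k 2^k p_k=1$.

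Next I would estimate $\ent(\mu)$ exactly as in Lemma~\ref{lem:hest}, using partitions refining the tile location, tile size, and tile content, obtaining
\[
\ent(\mu)\leq \bar\mu(E)\sum_{k\geq 0}p_k\bigl(C+2\log^+k+\log|\cL_{2^k}(X_0^+)|\bigr)
\]
for an absolute constant $C$. Rewriting $\log|\cL_{2^k}(X_0^+)|=2^k\kappa_k+2^k\,\ent_{\topo}(X_0^+)$ exposes the $\kappa_k$. For the integral, the key geometric input is: for any $(x,y)$ and any position $\bj$ inside a level-$k$ tile, the longest word starting at $\bj$ that lies in $\cL(X_0^+)$ has length at most $3\cdot 2^k$. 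The reason is that the bad parent block of size $2^{k+1}$, together with either the tile itself or an adjacent dyadic block, forces a bad word of length at most $3\cdot 2^k$ starting at every position of the tile. Consequently $\phi^+(\sigma^\bj x)\leq -a_{3\cdot 2^k}$ for such $\bj$, and summing across the tile gives
\[
\int\phi^+\dd\mu\leq -\bar\mu(E)\sum_{k\geq 0}p_k\,2^k\,a_{3\cdot 2^k}.
\]

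Finally, applying the hypothesis with $j=2^k$ yields $2^k a_{3\cdot 2^k}\geq 2k\log 2+\sum_{i=0}^k 2^i\kappa_i+1$. Substituting into the sum of the entropy and integral bounds, the $2^k\kappa_k$ and $2^k\ent_{\topo}(X_0^+)$ contributions cancel (using the normalization $\bar\mu(E)\sum_k 2^k p_k=1$), leaving only non-positive terms and producing the strict inequality $\ent(\mu)+\int\phi^+\dd\mu<\ent_{\topo}(X_0^+)$. The main obstacle is the geometric claim: in the one-sided setting, a bad dyadic block need not surround a given position symmetrically, so capturing the bad structure within a forward-looking window of length comparable to the tile size requires care, and the factor $3$ in $a_{3j}$ is a byproduct of this one-sided asymmetry that is likely not optimal.
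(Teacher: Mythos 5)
Your reduction to Proposition~\ref{lem:special-class} and your entropy estimate are fine, but the step you flag as ``the main obstacle'' is a genuine gap, and the justification you sketch for it is false. You claim that if $\bj$ lies in a level-$k$ dyadic tile then the longest word of $\cL(X_0^+)$ starting at $\bj$ has length at most $3\cdot 2^k$, because the bad parent block together with an adjacent dyadic block ``forces a bad word starting at every position of the tile.'' In the one-sided setting only forward windows matter, and the bad parent of a tile can lie partly or wholly \emph{behind} a given position of that tile. Concretely, let $X_0^+\subset\{0,1\}^{\N_0}$ be the golden-mean shift (no two consecutive $1$'s) and $x=1100\ldots0\ldots$ with the standard dyadic grid. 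Then $x|_{\{1\}}=1\in\cL(X_0^+)$ while $x|_{[0,2)}=11\notin\cL(X_0^+)$, so position $1$ sits in a level-$0$ tile; yet $\sigma x=1000\ldots\in X_0^+$, so $\phi^+(\sigma x)=0$ rather than $\le -a_3$. Whenever consecutive tile sizes increase (which happens with positive frequency for a typical point outside $X_0^+$), positions near the right end of a tile need not see any forbidden word in a forward window comparable to their tile length, so your bound $\int\phi^+\dd\mu\le-\bar\mu(E)\sum_k p_k\,2^k a_{3\cdot2^k}$ does not follow. (A secondary symptom: your cancellation only needs the single term $2^k\kappa_k$, whereas the stated hypothesis carries $\sum_{i\le\lfloor\log_2 j\rfloor}2^i\kappa_i$; if your route worked it would prove a strictly stronger theorem. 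There is also a constant mismatch, $3+2\log^+k$ from Lemma~\ref{lem:hest} versus the available $2k\log 2+1$, which fails for small $k$ when all $\kappa_i=0$.)

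The paper's proof is built precisely around this one-sided asymmetry and does not use the odometer tiling of Lemma~\ref{lem:tiling}. It passes to the two-sided subshift generated by $\cL(X_0^+)$ and encodes, via a sequence of ``pins,'' a forward-greedy decomposition into blocks of power-of-two length with the defining property that the \emph{doubled block starting at each pin} is already non-admissible. It then induces (Abramov and Kac) on ``superpins,'' i.e.\ pins whose following block is no longer than the preceding one. Between consecutive superpins at distance $j$ the block lengths strictly increase, so there are at most $\lfloor\log_2 j\rfloor+1$ of them, determined by the binary expansion of $j$ --- this is what produces the factor $\prod_\ell n_{i_\ell}$ in the entropy estimate and hence the sum $\sum_{i}2^i\kappa_i$ in the hypothesis. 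The superpin condition guarantees that the non-admissible doubled block after the second superpin starts exactly there and has length at most $2j$, so \emph{every} one of the $j$ positions in the return block sees a forbidden word within forward distance $3j$; this is where $a_{3j}$ legitimately enters. To repair your argument you would need to replace your tile-start inducing set by something playing the role of these superpins; as written, the integral estimate is not established.
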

\begin{rem}
    We note that the sequence which bounds $a_{3j}$ from below tends to zero as $j\to\infty$. To verify this we use the fact that $\kappa_i$ monotonically decreases to zero and make a crude estimate on the summation above replacing $\kappa_i$ by $\log |\cA|$ in the first half of the terms and by $\kappa_{\lfloor \log_2\sqrt{j}\rfloor}$ in the second half of the terms. We obtain
    \begin{align*}
        \frac{1}{j}\sum_{i=0}^{\lfloor \log_2 j\rfloor}2^i\kappa_{i}&\le \frac{1}{j}\left(2^{\log_2\sqrt{j}}\log|\cA|+\kappa_{\lfloor \log_2\sqrt{j}\rfloor}2^{1+\log_2 j}\right)\\
        & \le \frac{\log|\cA|}{\sqrt{j}}+2\left(\frac{\log |\cL_{\lfloor\sqrt j\rfloor}(X_0^+)|}{\lfloor \sqrt{j}\rfloor}-\ent_{\topo}(X_0^+)\right)\underset{j\to \infty}\longrightarrow 0.
    \end{align*}
\end{rem}
\begin{proof}
  Consider $X=\cA^{\Z}$ and let $X_0\subset X$ be the two-sided subshift generated by the language of $X^+_0$. We define the natural projection map 
\begin{equation}\label{eq:def_projection}
    p:X\to X^+;\quad p((x_i)_{i\in\Z})=(x_i)_{i\in \N}.
\end{equation}
Then $p(X_0)=X_0^+$.  Any shift-invariant measure $\mu^+$ on  $X^+$ can be written as $\mu^+=p_*\mu$, where $\mu$ is a shift-invariant probability measure on $X$. In this case we have $\ent(\mu)=\ent(\mu^+)$. For a given continuous one-sided potential $\phi^+:X^+\to \R$, the corresponding two-sided potential $\phi:X\to \R$ is given by $\phi=\phi^+\circ p$. Clearly, $\int\phi^+\dd\mu^+=\int\phi\dd\mu$, and hence the pressure functions of $\phi$ and $\phi^+$ coincide. We show that $\ent(\mu)+\int\phi\dd \mu< 0$ for any ergodic measure $\mu$ which is not supported on $X_0$.

We consider the product space $X\times Z$ with map $\bar{\sigma}(x,z)=(\sigma x,\sigma z)$ where $Z=\{0,1\}^\Z$. 
  Let $\bar{X}$ be the subshift of $X\times Z$ consisting of pairs $(x,z)$ with the following properties:
\begin{itemize}
\item If $i<j$, and $z_k=0$ for $i<k\le j$ (possibly with $z_i=1$) then $x_i\cdots x_j$ is admissible in $X_0$;
\item If $i<j$, $z_i=1$, $z_j=1$ then $j-i=2^k$ for some $k\in\N_0$ and $x_i\cdots x_{2j-i-1}$ is not admissible in $X_0$.
\end{itemize}
Hence, we partition each $x\in X$ into maximal dyadic blocks which are admissible in $X_0$.  We note that the map $\pi\colon \bar X\to X$ defined by $\pi(x,z)=x$ is a countable-to-one factor map: for any $x$ and for any $n$ such that $z_n=1$, $(z_m)_{m\ge n}$ is uniquely determined by the condition that $(x,z)\in \bar X$. As a consequence, $\ent(\bar\mu)=\ent(\mu)$. 

We refer to the space $\bar{X}$ as the pin-sequence space and to the 1's as pins.We say that a particular pin is a superpin if the dyadic interval which directly follows the pin is not larger than the preceding one. We induce on the set of points $(x,z)$ such that $z_0$ is a superpin, i.e.
\[
E=\{(x,z)\in \bar X\colon z_0=1\text{ and }
\min\{i>0\colon z_i=1\}\le \min\{i>0\colon z_{-i}=1\}\}.
\]

As in the proof of Theorem \ref{thm:one_sided_SFT}, we obtain the induced system $(E,\bar{\mu}_E, \bar{\sigma}_E)$ and the induced potential $\bar{\phi}_E$. We denote the first return time partition of $E$ by $\cQ=\{Q_j\}$ where $Q_j=\{(x,z)\in E: r(x,z)=j\}$. Let $q_j=\bar{\mu}_E(Q_j)$ as before.

We estimate $\int\bar{\phi}_E\dd\bar{\mu}_E$ first. Suppose $(x,z)\in Q_j$. Since the superpin is at $z_j$, we know that the admissible dyadic block which starts at position $j+1$ cannot be longer than the previous dyadic block, and, in particular, cannot be longer than $j$. Hence, the distance to the next pin is at most $j$. Recall that the pins partition $x$ into maximal dyadic blocks admissible in $X_0$, so the block of length twice the distance to this pin which starts at $j+1$ cannot be admissible.    We conclude that $x_j\cdots x_{3j}\notin \cL(X_0)$. Therefore, for  $i=0,\ldots,j-1$ we have $\dist(p(\sigma^i x), X_0^+)\ge \frac{1}{2^{3j}}$, where $p$ is the projection map given by (\ref{eq:def_projection}). We see that $\phi(\sigma^i x)\le -a_{3j}$ for  $i=0,\dots,j-1$, whence $\bar{\phi}_E(x,z)=\sum_{i=0}^{j-1}\phi(\sigma^i)(x)\le -ja_{3j}$. We obtain 
\begin{equation}\label{eq:phi+_estimate}
  \int \bar{\phi}_E\dd\bar{\mu}_E\le -\sum_{j=1}^\infty ja_{3j}q_j.
\end{equation}

We estimate $\ent(\bar{\mu})$ by the entropy of the partition $\cP$ which is a subpartition of $\cQ$,
in which each $Q_j$ is partitioned into length $j$ cylinder sets. We write $\Ent(\cP)\le \Ent(\cQ)+\Ent(\cP|\cQ)$ and note that $\Ent(\cQ)=\sum_{j=1}^{\infty}-q_j\log q_j\le \sum_{j=1}^{\infty}2q_j\log j+1$ as was shown in (\ref{eq:entropy_first_return_partition}). It remains to estimate $\Ent(\cP|\cQ)$.


 Since the pins between two adjacent superpins at $z_0$ and $z_j$ are separated by gaps that are powers of 2 of increasing order, they are uniquely determined by the binary expansion of $j$. Suppose there are $m(j)-1$ pins between $z_0$ and $z_j$ with gaps of length $2^{i_1},\ldots,2^{i_{m(j)-1}}$ so that $j= 2^{i_1}+\dots+2^{i_{m(j)}}$. Clearly, $m(j)-1\le\lfloor \log_2 j\rfloor$. 
 Then the number of different cylinder sets of $Q_j$ is at most $n_{i_1}\times\cdots\times n_{i_{m(j)}}$ where $n_i=|\cL_{2^i}(X_0)|$. Denote $\kappa_i=\frac{\log n_i}{2^i}-h_{\rm top}(X_0) $. Then,
 \begin{align*}\label{eq:entropy+_estimate}
    \ent(\bar{\mu}_E) &  \le \sum_{j=1}^{\infty} 2q_j\log j+1+\sum_{j=1}^{\infty}q_j\log (n_{i_1}\times\cdots\times n_{i_{m(j)}})\\
   & \le \sum_{j=1}^{\infty} q_j\Bigg(2\log j+\sum_{k=1}^{m(j)}\log n_{i_k} +1\Bigg) \\
   & \le \sum_{j=1}^{\infty} q_j\Bigg(2\log j+\sum_{k=1}^{m(j)}2^{i_k}\kappa_{i_k} +j\ent_{\topo}(X_0)+1\Bigg)\\
   & \le \sum_{j=1}^{\infty} jq_j\Bigg(\frac{2\log j}{j}+\frac{1}{j}\sum_{i=0}^{\lfloor \log_2 j\rfloor+1}2^i\kappa_i +\frac{1}{j}+\ent_{\topo}(X_0)\Bigg).
 \end{align*}
 Combining the last inequality with (\ref{eq:phi+_estimate}) we obtain
 \[
 \ent(\bar{\mu_E})+\int \bar{\phi}_E\dd\bar{\mu}_E\le \sum_{j=1}^{\infty} jq_j\Bigg(\frac{2\log j}{j}+\frac{1}{j}\sum_{i=0}^{\lfloor \log_2 j\rfloor}2^i\kappa_{i} +\frac{1}{j}+\ent_{\topo}(X_0)-a_{3j}\Bigg).
 \]
Taking $(a_j)$ to be a monotone sequence such that 
\[
a_{3j}\ge\frac{2\log j}{j}+
\frac{1}{j}\sum_{i=0}^{\lfloor \log_2 j\rfloor}2^i\kappa_{i} +\frac{1}{j}
\] 
gives $\ent(\bar{\mu_E})+\int \bar{\phi}_E\dd\bar{\mu}_E\le \ent_{\topo}(X_0)\sum_{j=1}^{\infty} jq_j=\frac{\ent_{\topo}(X_0)}{\bar{\mu}_E(E)}$, so that $\int\phi\dd\mu+\ent(\mu)\le \ent_{\topo}(X_0)$. 
\end{proof}

\begin{corollary}\label{cor:Thue-Morse}
    Suppose a subshift $X_0^+\subset\cA^\N$ has a word complexity function that grows at most polynomially and is the smallest subshift which supports all its invariant measures.
  Then the potential $\phi$ defined by (\ref{eq:def_phi}) 
  freezes on $X_0^+$ provided $(a_j)$ is a decreasing sequence satisfying $a_j\ge\frac{\log^2 j}{j}$, $j\in\N$.
\end{corollary}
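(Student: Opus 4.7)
The plan is simply to invoke Theorem \ref{thm:super-pins} after verifying that its explicit lower bound on $a_{3j}$ is implied, under the polynomial-complexity assumption, by the hypothesis $a_j\ge \log^2j/j$. First I would note that polynomial growth of the complexity function forces $\ent_{\topo}(X_0^+)=0$: if $|\cL_n(X_0^+)|\le Cn^k$ for constants $C,k>0$, then $\log|\cL_n(X_0^+)|/n\to 0$. Consequently the quantities $\kappa_i$ defined in Theorem \ref{thm:super-pins} reduce to $\kappa_i=\frac{\log|\cL_{2^i}(X_0^+)|}{2^i}\le \frac{\log C+ki\log 2}{2^i}$, so that $2^i\kappa_i=O(i)$ uniformly in $i$.

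With this bound on $\kappa_i$, a direct estimate gives
\[
\frac{1}{j}\sum_{i=0}^{\lfloor \log_2 j\rfloor}2^i\kappa_i=O\!\left(\frac{(\log j)^2}{j}\right),
\]
which dominates the remaining $\frac{2\log j}{j}+\frac{1}{j}$ terms in the lower bound on $a_{3j}$. Therefore the hypothesis of Theorem \ref{thm:super-pins} is satisfied (up to a multiplicative constant that may be absorbed into the leading $\log^2 j/j$ coefficient, and possibly an adjustment of finitely many initial terms so that $(a_j)$ is genuinely decreasing) as soon as $a_j\ge \log^2 j/j$.

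Having checked the hypothesis, Theorem \ref{thm:super-pins} gives that $\phi$ has a freezing phase transition with $\ZTA(\phi)$ equal to the set of measures of maximal entropy on $X_0^+$. Since $\ent_{\topo}(X_0^+)=0$, every shift-invariant probability measure carried by $X_0^+$ is automatically a measure of maximal entropy, so $\ZTA(\phi)$ coincides with the full set of invariant measures on $X_0^+$. The assumption that $X_0^+$ is the smallest subshift supporting all its invariant measures then guarantees that $X_0^+$ is the smallest subshift containing the supports of all measures in $\ZTA(\phi)$, which is precisely the definition of $\phi$ freezing on $X_0^+$. The only real subtlety in the argument is the bookkeeping with constants needed to replace the bound $O(\log^2 j/j)$ by the clean expression $\log^2 j/j$, which is a purely computational matter and not a genuine obstacle.
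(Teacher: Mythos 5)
Your proposal is correct and follows essentially the same route as the paper: polynomial complexity gives $\ent_{\topo}(X_0^+)=0$ and $2^i\kappa_i=O(i)$, so $\frac1j\sum_{i\le\log_2 j}2^i\kappa_i=O(\log^2 j/j)$, and Theorem \ref{thm:super-pins} applies. You even make explicit the final step (zero entropy forces every invariant measure on $X_0^+$ to be of maximal entropy, so the smallest-subshift hypothesis yields freezing on $X_0^+$) that the paper leaves implicit, and your caveat about absorbing the multiplicative constant into $\log^2 j/j$ matches the same looseness present in the paper's own statement.
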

\begin{proof}
    Suppose $|\cL_j(X_0)|\le p(j)$ for all $j$ where $p(j)$ is a polynomial of degree $q\in\N$. Then $\ent(X_0^+)=0$ and $2^i\kappa_i\le cq\log i$ for some constant $c>0$. We then estimate
    \[
    \frac{1}{j}\sum_{i=0}^{\lfloor \log_2 j\rfloor+1}2^i\kappa_{i}\le \frac{cq\log j(\lfloor\log_2 j\rfloor+2)}{j}\le \frac{2cq\log^2 j}{j}.
    \]
\end{proof}
Bruin and Leplaideur \cite{BL1} consider the Thue-Morse one-sided subshift $X_0^+$ and
$\phi(x)=-\frac{1}{j^a}+o\big(\frac{1}{j^a}\big)$ 
if $\dist(x,X_0^+)=2^{-j}$. (For this subshift $\cA=\{0,1\}$ and the generating substitution is given by the rules $0\to 01$, $1\to 10$.) They assert that $\phi$ exhibits a freezing phase transition for $a \in (0,1)$ but undergoes no phase transition for $a > 1$. They further posed the question of whether $\phi$ undergoes a freezing phase transition at $a = 1$. The proofs presented in \cite{BL1} were incomplete but were later corrected in \cite{BCHL}. Additionally, the question of freezing for $\phi(x) = -\frac{1}{j}$ was affirmatively resolved.
We observe that the Thue-Morse subshift has linear complexity. Consequently, Corollary \ref{cor:Thue-Morse} ensures freezing when $\phi(x) \approx -\frac{\log^2 j}{j}$, provided that $\dist(x, X_0) = 2^{-j}$. It also follows from Theorem \ref{nogo-thm} that freezing phase transition does not occur for $\phi\approx -\frac{1}{n^a}$, $a>1$. 

\section{Proofs of some Folklore results}\label{appendix-folklore}

For the reader's convenience, we provide proofs of the following facts mentioned in the introduction and Section \ref{sec:main-results}, as we could not find a readily available reference.
\begin{rem}
It can be readily checked that Theorem \ref{thm:folklore} and Proposition \ref{prop:affine-parts-pressure} below are valid for any general expansive continuous $\Zd$-action on a compact metric space.
\end{rem}

\subsection{Properties of accumulation points of \texorpdfstring{$(\mu_{\beta_n\phi})_n$}{}
as \texorpdfstring{$\beta_n\to\infty$}{beta}}\label{app:folklore-on-ground-states}

Let $\phi:\cA^{\Z^d}\to\R$ be a continuous function.
Recall that a shift-invariant measure $\mu$ is maximizing for $\phi$ if
$\int \phi\dd\mu\geq \int \phi \dd\eta$ for all shift-invariant
measures $\eta$. By weak$^*$-compactness of the set of shift invariant measures and continuity of the mapping $\eta\mapsto \int\phi\dd\eta$,
the maximum is attained, so that $\MM(\phi)$, the set of maximizing measures, is non-empty.
Let
\[
s_\phi=\sup_\nu \sint \phi \dd\nu\quad\mathrm{and}
\quad \ent_\phi=\sup\big\{\ent(\eta): \eta\in \MM(\phi)\big\}.
\]
In statistical physics, $\ent_\phi$, known as the residual entropy, can also be determined as the exponential growth rate of the number of ground configurations in increasingly large boxes. \cite{AL}.

Recall also that the entropy function $\eta\mapsto \ent(\eta)$, where $\eta$ runs over the set of shift-invariant measures, is 
bounded between $0$ and $\log|\cA|$, and is upper semi-continuous \cite[Theorem 4.5.6]{Keller}.

Using the variational principle, we define the pressure function $p_\phi:\R_+\to\R$ as
\[
\beta\mapsto p_\phi(\beta):=\sup_{\nu}\Big\{\ent(\nu)+\beta \sint \phi\dd\nu\Big\},
\]
where the supremum is taken over all shift-invariant measures. Since the functional $\nu\mapsto \ent(\nu)+\beta\int\phi\dd\nu$
is upper semi-continuous, the supremum is attained again. The measures 
for which the supremum is attained are the equilibrium states for $\beta\phi$.
As a convex function, $p_\phi$ has left and right derivatives $p'_{\phi,-}(\beta)$ and
$p'_{\phi,+}(\beta)$ at each $\beta>0$, and
\begin{equation}\label{leftrightderivative}
p'_{\phi,-}(\beta)\leq \int \phi\dd\mu_{\beta\phi}\leq p'_{\phi,-}(\beta)\text{ for any $\mu_{\beta\phi}\in\ES(\beta\phi)$}.
\end{equation}
(When $p'_{\phi,-}(\beta)=p'_{\phi,+}(\beta)$, that is, when the pressure function is differentiable at $\beta$, we thus have
$p'_\phi(\beta)=\int \phi \dd\mu_{\beta\phi}$.)
These facts follow from a combination of \cite[Chapter 16]{Georgii} and \cite[Theorems 3.4 and 3.12]{Ruelle}.

\begin{thm}\label{thm:folklore}
Let $\phi:\cA^{\Z^d}\to\R$ be a continuous function. We have the following facts:
\begin{itemize}
\item[\textup{(i)}] 
$\ZTA(\phi)\subseteq \MM(\phi)$, and if $\mu\in\ZTA(\phi)$ then 
$\ent(\mu)=\sup\{\ent(\nu):\nu\in\MM(\phi)\}$.
\item[\textup{(ii)}] The pressure function has a slant asymptote. More precisely,
\[
\lim_{\beta\to+\infty} \big(\, p_\phi(\beta)-(s_\phi\beta +\ent_\phi)\big)=0. 
\]
\end{itemize}
\end{thm}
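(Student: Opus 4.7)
Both parts rest on the variational principle together with two functional facts: vague continuity of $\nu\mapsto\int\phi\,\dd\nu$ (since $\phi$ is continuous on the compact space $X$) and upper semi-continuity of $\nu\mapsto\ent(\nu)$. I would first dispose of an existence point: the set $\MM(\phi)$ is the preimage of $\{s_\phi\}$ under a continuous map, hence compact, and by upper semi-continuity of the entropy the supremum $\ent_\phi$ is attained at some $\eta^*\in\MM(\phi)$.

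For \textup{(i)}, let $\beta_n\to\infty$, $\mu_n\in\ES(\beta_n\phi)$, and $\mu_n\leadsto\mu$ along a subsequence. Testing the variational principle for $\beta_n\phi$ against an arbitrary $\eta\in\MM(\phi)$ rearranges to
\[
\ent(\mu_n)-\ent(\eta)\;\geq\;\beta_n\bigl(s_\phi-\textstyle\int\phi\,\dd\mu_n\bigr)\;\geq\;0.
\]
Since the left side is bounded above by $\log|\cA|$, the bracket on the right is $O(1/\beta_n)$, so $\int\phi\,\dd\mu_n\to s_\phi$. Vague continuity then gives $\int\phi\,\dd\mu=s_\phi$, so $\mu\in\MM(\phi)$. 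Applying upper semi-continuity along the same subsequence, $\ent(\mu)\geq\limsup_n\ent(\mu_n)\geq\ent(\eta)$ for every $\eta\in\MM(\phi)$, hence $\ent(\mu)=\ent_\phi$.

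For \textup{(ii)}, the lower bound $p_\phi(\beta)\geq s_\phi\beta+\ent_\phi$ is immediate by plugging $\eta^*$ into the variational principle. For the matching upper bound I would argue by contradiction: if $p_\phi(\beta_n)-s_\phi\beta_n-\ent_\phi\geq\delta>0$ along some $\beta_n\to\infty$, picking $\mu_n\in\ES(\beta_n\phi)$ and using $\int\phi\,\dd\mu_n\leq s_\phi$ forces $\ent(\mu_n)\geq\ent_\phi+\delta$ for all $n$. Extracting a vague-limit point $\mu$, part \textup{(i)} shows $\mu\in\MM(\phi)$, while upper semi-continuity yields $\ent(\mu)\geq\limsup_n\ent(\mu_n)\geq\ent_\phi+\delta$, contradicting the definition of $\ent_\phi$.

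The only non-routine step is the coordination of two opposing bounds on $\ent(\mu_n)$: the variational comparison delivers a uniform lower bound by $\ent_\phi$, whereas upper semi-continuity of entropy controls $\limsup\ent(\mu_n)$ from above by $\ent(\mu)$. Combined with the fact that any vague-limit point $\mu$ lies in $\MM(\phi)$ and therefore satisfies $\ent(\mu)\leq\ent_\phi$, these two inequalities pinch $\ent(\mu_n)\to\ent_\phi$ and simultaneously force $\beta_n\bigl(s_\phi-\int\phi\,\dd\mu_n\bigr)\to 0$, which is what makes the affine asymptote tight rather than merely one-sided.
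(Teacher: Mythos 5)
Your proof is correct, and part (i) follows essentially the same chain of inequalities as the paper: compare the equilibrium state $\mu_n$ against a maximizing measure $\eta$ via the variational principle, divide by $\beta_n$ to get $\int\phi\,\dd\mu_n\to s_\phi$ (hence $\mu\in\MM(\phi)$), and then use the pointwise bound $\ent(\mu_n)\ge\ent(\eta)$ together with upper semi-continuity of entropy to pinch $\ent(\mu)=\ent_\phi$. Where you genuinely diverge is in part (ii). The paper introduces $g_\phi(\beta)=p_\phi(\beta)-s_\phi\beta$ and proves it is \emph{decreasing} by computing its left derivative via the standard bound $p'_{\phi,-}(\beta)\le\int\phi\,\dd\mu_{\beta\phi}\le s_\phi$; monotonicity gives existence of $\lim_\beta g_\phi(\beta)$ for free, and the limit is then identified by sandwiching it between $\sup\{\ent(\eta):\eta\in\MM(\phi)\}$ and $\ent(\mu^*)$. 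You instead prove $\limsup_\beta\bigl(g_\phi(\beta)-\ent_\phi\bigr)\le 0$ by contradiction, extracting a vague-limit point of equilibrium states and feeding it back into part (i). Both are sound. Your route is more self-contained --- it needs only compactness and upper semi-continuity, not the convexity/derivative formula for the pressure that the paper imports from Georgii and Ruelle --- and it makes explicit the mechanism $\beta_n\bigl(s_\phi-\int\phi\,\dd\mu_n\bigr)\to 0$ that forces the asymptote to be attained in the limit. The paper's route buys the slightly stronger structural fact that $\beta\mapsto p_\phi(\beta)-s_\phi\beta$ decreases monotonically to $\ent_\phi$, i.e.\ the approach to the slant asymptote is monotone, which is not visible from your argument but also not needed for the statement.
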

\begin{proof}
Firstly, if $\eta\in\MM(\phi)$, the variational principle yields the bounds
\begin{equation}\label{eq:VP-est}
    h(\eta)+\beta s_\phi\le p(\beta\phi)\le \log|\cA|+\beta s_\phi.
\end{equation}
Now let $\mu^*\in \ZTA(\phi)$ and let $\beta_n\to\infty$ and $(\mu_n)$ be such that
$\mu_n$ is an equilibrium state for $\beta_n\phi$ and $\mu_n\leadsto\mu^*$.
By \eqref{eq:VP-est}, $\int \beta_n\phi\dd\mu_n+\ent(\mu_n)\geq \beta_ns_\phi+\ent(\eta)$ 
for all $n$. Dividing by $\beta_n$ and taking the limit, we obtain $\int\phi\dd\mu^*\ge s_\phi$,
so that $\mu^*\in\MM(\phi)$. 
That is, we have shown $\ZTA(\phi)\subseteq\MM(\phi)$. 

Now we prove that, if $\mu^*\in\ZTA(\phi)$ then it has the maximal entropy over the set of
maximizing measures for $\phi$. Along the way, we will
also prove that $\lim_{\beta\to+\infty} \big(\, p_\phi(\beta)-s_\phi\beta\big)= \ent_\phi$, which will complete the proof of statement (ii).
Define the auxiliary function $g_\phi(\beta):=p_\phi(\beta)-s_\phi \beta$ (for $\beta>0$). From
\eqref{eq:VP-est}, we see 
\begin{equation}\label{eq-gh}
g_\phi(\beta)\geq \ent(\eta),\;\forall \beta>0.    
\end{equation}
We can compute the left derivative of $g_\phi$ to show that it is a decreasing function. Indeed,
\[
g'_{\phi,-}(\beta)=p'_{\phi,-}-s_\phi\leq \int \phi \dd\mu_{\beta\phi}-s_\phi\leq 0.
\]
Thus, the limit of $g_\phi(\beta)$ as $\beta\to\infty$ exists, so using \eqref{eq-gh}, we see
\begin{equation}\label{unbout}
\lim_{\beta\to\infty} g_\phi(\beta)\geq \ent(\eta).  
\end{equation}
Taking a supremum of \eqref{unbout} over $\eta\in\MM(\phi)$, we see
\begin{equation}\label{unbout2}
\lim_{\beta\to\infty} g_\phi(\beta)\geq \sup\{\ent(\eta)\colon\eta\in\MM(\phi)\}.
\end{equation}
Let $\alpha>0$ and let $\mu_{\alpha\phi}$ be any equilibrium state for $\alpha\phi$,
We see from the variational principle and the definition of $s_\phi$ that
\[
g_\phi(\alpha)=\ent(\mu_{\alpha\phi})+\alpha\Big(\,\sint \varphi \dd\mu_{\alpha\phi}-s_\phi\Big)\leq \ent(\mu_{\alpha\phi}),
\]
so that since $g_\phi$ is decreasing,
\[
\ent(\mu_{\alpha\phi})\ge \lim_{\beta\to\infty}g_\phi(\beta)\text{ for all $\alpha$.}
\]

Let $\mu^*\in\ZTA(\phi)$ and let $\beta_n\to\infty$ and $(\mu_n)$ be a sequence of equilibrium states for $\beta_n\phi$
such that $\mu_n\leadsto\mu^*$. 
By upper semi-continuity of the entropy function we deduce that
\begin{equation}\label{unbout3}
\ent(\mu^*)\ge \limsup_n \ent(\mu_{\beta_{n}\phi})\ge \lim_{\beta\to\infty} g_\phi(\beta)
\end{equation}
Since we have shown $\mu^*\in\MM(\phi)$, combining \eqref{unbout2} with \eqref{unbout3}, we see
\begin{equation*}
    \lim_{\beta\to\infty}g_\phi(\beta)\geq \sup\{\ent(\eta)\colon\eta\in\MM(\phi)\}\ge \ent(\mu^*)\ge \lim_{\beta\to\infty}g_\phi(\beta).
\end{equation*}
This finishes the proof of statement (ii) and the second part of statement (i).
\end{proof}

\subsection{When the pressure function is affine on a segment}

The following proposition provides insight into the affine parts of the pressure function graph.
\begin{prop}\label{prop:affine-parts-pressure}
Let $\phi\colon\cA^\Zd\to\R$ be a continuous function. Then, for any $\beta_1<\beta_2$, the following assertions are equivalent:
\begin{enumerate}
    \item $\ES(\beta_1\phi)\cap \ES(\beta_2\phi)\neq \emptyset$;\label{cond:nonempt}
    \item $p_\phi$ is affine on $\left[
    \beta_1,\beta_2\right]$;\label{cond:affine}
    \item $\ES(\beta\phi)=\ES(\beta_1\phi)\cap \ES(\beta_2\phi)$ for all $\beta\in(\beta_1,\beta_2)$.\label{cond:intersect}
\end{enumerate}
\end{prop}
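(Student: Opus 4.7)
The plan is to prove the equivalences via the cycle (1) $\Rightarrow$ (2) $\Rightarrow$ (3) $\Rightarrow$ (1), with the nontrivial work concentrated in the first two implications. The underlying principle is that for a fixed shift-invariant $\mu$, the map $\beta \mapsto \ent(\mu) + \beta \int \phi \dd\mu$ is affine, and the pressure function $p_\phi$ is its pointwise supremum over $\mu$, hence convex. The equivalences then express the standard fact that convex functions coincide with affine tangents precisely on the intervals where those tangents are supported by a common maximizer.

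For (1) $\Rightarrow$ (2), I would fix $\mu \in \ES(\beta_1\phi)\cap \ES(\beta_2\phi)$. The two equilibrium conditions $\ent(\mu)+\beta_i\int\phi\dd\mu=p_\phi(\beta_i)$ force the affine function $\beta \mapsto \ent(\mu)+\beta\int\phi\dd\mu$ to agree with $p_\phi$ at $\beta_1$ and $\beta_2$. The variational principle gives $p_\phi(\beta) \geq \ent(\mu)+\beta\int\phi\dd\mu$ for all $\beta$, while convexity of $p_\phi$ gives the reverse inequality on $[\beta_1,\beta_2]$. Hence $p_\phi$ coincides with this affine function on the whole segment.

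For (2) $\Rightarrow$ (3), I would argue by double inclusion. Given $\beta = t\beta_1+(1-t)\beta_2$ with $t \in (0,1)$, take any $\mu \in \ES(\beta\phi)$. Write $\ent(\mu)+\beta\int\phi\dd\mu$ as the convex combination $t(\ent(\mu)+\beta_1\int\phi\dd\mu)+(1-t)(\ent(\mu)+\beta_2\int\phi\dd\mu)$; each summand is bounded by $p_\phi(\beta_i)$ via the variational principle. The affineness hypothesis forces equality in the convex combination, so with $t,1-t>0$ each inequality must be an equality, giving $\mu \in \ES(\beta_1\phi)\cap\ES(\beta_2\phi)$. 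Conversely, any $\mu$ in that intersection satisfies $\ent(\mu)+\beta\int\phi\dd\mu = p_\phi(\beta)$ by the linear interpolation argument from the previous implication, placing it in $\ES(\beta\phi)$.

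Finally, (3) $\Rightarrow$ (1) is immediate because upper semi-continuity of the entropy functional together with weak-$^*$ compactness of shift-invariant measures (invoked already in Section~\ref{app:folklore-on-ground-states}) guarantees $\ES(\beta\phi)\ne\emptyset$ for any $\beta$. I do not expect any serious obstacle here; the whole proposition is a packaging of convex-analytic facts (equilibrium states as tangent functionals to $p_\phi$), and the only subtlety is remembering to use $t,1-t>0$ strictly inside the segment, so that equality in a convex combination forces equality in each summand.
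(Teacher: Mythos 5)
Your proposal is correct, and its overall architecture — the cycle $(1)\Rightarrow(2)\Rightarrow(3)\Rightarrow(1)$, with the key inequality being that the affine function $\beta\mapsto \ent(\mu)+\beta\int\phi\dd\mu$ lies below the convex function $p_\phi$ and touches it at $\beta_1$ and $\beta_2$ — is the same as the paper's. The one place where you genuinely diverge is the forward inclusion in $(2)\Rightarrow(3)$: the paper takes $\mu\in\ES(\beta\phi)$ for an interior $\beta$, uses the identity $p_\phi'(\beta)=\int\phi\dd\mu$ (valid since the affine hypothesis makes $p_\phi$ differentiable there, via \eqref{leftrightderivative}) and then checks directly that $\ent(\mu)+t\int\phi\dd\mu=p_\phi(t)$ for all $t\in[\beta_1,\beta_2]$; you instead write $\ent(\mu)+\beta\int\phi\dd\mu$ as a strict convex combination of the values at $\beta_1,\beta_2$, bound each summand by the variational principle, and use that equality in a convex combination with strictly positive weights forces equality term by term. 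Your route is slightly more elementary in that it avoids invoking the relation between equilibrium states and one-sided derivatives of the pressure; the paper's route makes the "tangent functional" picture explicit. Both are complete, and your closing remark that $(3)\Rightarrow(1)$ only needs nonemptiness of $\ES(\beta\phi)$ (upper semi-continuity of entropy plus compactness) matches the paper.
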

\begin{proof}
Suppose \eqref{cond:nonempt} holds and let $\mu\in\ES(\beta_1\phi)\cap\ES(\beta_2\phi)$ 
and $s\in (0,1)$ so that $(1-s)\beta_1+s\beta_2\in(\beta_1,\beta_2)$.
Then since $p_\phi$ is convex, $p_\phi((1-s)\beta_1+s\beta_2)\le (1-s)p_\phi(\beta_1)+sp_\phi(\beta_2)$.
On the other hand, $p_\phi((1-s)\beta_1+s\beta_2)\ge \ent(\mu)+((1-s)\beta_1+s\beta_2)\int \phi\dd\mu=
(1-s)p_\phi(\beta_1)+sp_\phi(\beta_2)$, so that $p_\phi$ is affine on $[\beta_1,\beta_2]$ as claimed
and $\mu\in ES(((1-s)\beta_1+s\beta_2)\phi)$.

Now suppose that $p_\phi$ is affine on $[\beta_1,\beta_2]$ and let $\beta=(1-s)\beta_1+s\beta_2$
with $0<s<1$ and $\mu\in \ES(\beta\phi)$. Then $p_\phi'(\beta)=\int\phi\dd\mu$, so that 
$\ent(\mu)+t\int\phi\dd\mu=\ent(\mu)+\beta\int\phi\dd\mu+(t-\beta)\int\phi\dd\mu=p_\phi(\beta)+
(t-\beta)p_\phi'(\beta)=p_\phi(t)$ for all $t\in[\beta_1,\beta_2]$. It follows that
$\mu\in \ES(\beta_1\phi)\cap \ES(\beta_2\phi)$. Conversely, if $\mu\in \ES(\beta_1\phi)\cap \ES(\beta_2\phi)$
the first part implies that $\mu\in \ES(\beta\phi)$. 

The implication \eqref{cond:intersect} implies \eqref{cond:nonempt} is trivial.
\end{proof}

\subsection{Proof of Proposition \ref{prop:freezing=slant-asymptote}}
 
By Proposition \ref{prop:affine-parts-pressure},
$p_\phi$ is affine on $[\,\beta_1,\beta_2]$
if and only if $\ES(\beta\phi)$ is equal for all $\beta\in(\beta_1,\beta_2)$. 
If $\phi$ freezes at $\betac$, by 
continuity of $p_\phi$, we see $p_\phi$ is affine on $[\,\betac,\infty)$
but no larger interval. 


For the converse, using Proposition \ref{prop:affine-parts-pressure}, if $p_\phi$ is affine on $[\,\betac,\infty)$, 
then $\ES(\beta\phi)=\ES(\beta'\phi)$ for all $\beta,\beta'>\betac$. Meanwhile, if $\beta<\betac<\beta'$, 
since $p_\phi$ is not affine on $[\,\beta,\beta']$, we see $\ES(\beta\phi)\cap\ES(\beta'\phi)=\emptyset$.
Hence $\phi$ has a freezing phase transition at $\betac$.

By Theorem \ref{thm:folklore}, $p_\phi(\beta)$ approaches the slant asymptote $s_\phi\beta+\ent_\phi$
as $\beta\to\infty$, so if $p_\phi$ is affine
on $[\betac,\infty)$, in fact $p_\phi(\beta)=s_\phi\beta+\ent_\phi$ on that range.

\subsection{Gibbs states have full support}\label{app:Gibbs} 

For completeness, and since we were unable to find a suitable reference, we provide a proof of the following well-known result invoked in Section \ref{sec:proof-theorem-B'}.  

We begin by quickly recalling the definition of a Gibbs state associated with a given interaction $\Phi \in \mathcal{S}$ (see \eqref{SmallBanachSpace}). For a comprehensive treatment, we refer to \cite{Georgii}.  

Let $\mathfrak{B}$ denote the Borel $\sigma$-algebra on $X$, which coincides with the Borel $\sigma$-algebra generated by cylinder sets. Given a subset $\Lambda \subset \mathbb{Z}^d$, we define $\mathfrak{B}_\Lambda$ as the $\sigma$-algebra generated by the coordinate maps $x \mapsto x_\bi$ for $\bi \in \Lambda$.  

The Gibbsian specification for $\Phi$ is given by  
\[
\gamma^\Phi_\Lambda(x|y):=
\frac{\e^{U^\Phi_\Lambda(x|y)}}{Z^\Phi_\Lambda(y)}
\]
where
\[
U^\Phi_\Lambda(x|y):=\sum_{\scaleto{\substack{\Lambda'\Subset\Z^d \\\Lambda'\cap\Lambda\neq \emptyset}}{12pt}} \Phi_{\Lambda'}(x\vert_{\Lambda}y\vert_{\Lambda^\comp}),
\]
and $Z^\Phi_\Lambda(y)$ is the normalization factor, $y\vert_{\Lambda^\comp}$ represents the boundary condition, and $\Lambda \Subset \mathbb{Z}^d$.  
(For each $y$, $\big(\gamma^\Phi_\Lambda(\cdot|y)\big)_{\Lambda\Subset \Z^d}$ is a family of probability
kernels on $(X,\mathfrak{B})$ subject to natural conditions we don't list here.)

A probability measure $\mu$ on $(X,\mathfrak{B})$ is said to be a Gibbs state for $\Phi$ if, for every finite subset $\Lambda \Subset \mathbb{Z}^d$, the conditional probability $\gamma^\Phi_\Lambda(x|\cdot)$ provides a version of  
\[
\mu\big(x\vert_{\Lambda}\big|\mathfrak{B}_{\Lambda^\comp}\big).
\]
Equivalently, this means that for all $B \in \mathfrak{B}$ and all finite $\Lambda \Subset \mathbb{Z}^d$, we have  
\begin{equation}\label{DLR}
\mu(B)=\int \sum_{x\,\in \cA^\Lambda}\gamma^\Phi_\Lambda(x|y)\,\mathds{1}_{B}(x\vert_{\Lambda}
y\vert_{\Lambda^\comp})\dd\mu(y).
\end{equation}
(This is called the DLR equation -- after Dobrushin, Lanford and Ruelle.) It is well known that at least one Gibbs state for $\Phi$ always exists.  

Finally, for $x\in X$ and $n \geq 1$, we define the cylinder set centered at $x$ with support on $\Lambda_n$ as  
\[
C_n(x) := \big\{y \in X : y\vert_{\Lambda_n} = x\vert_{\Lambda_n}\big\}.
\]

\begin{prop}
Let $\Phi\in\mathcal{S}$ and $\mu$ be a Gibbs state for $\Phi$.
Then there exists $\rho=\rho(\Phi,\cA)\in \left(0,1\right)$ such that, for all
$x\in X, n\geq 1$,  
\begin{equation}\label{lbGm}
\mu(C_n(x))\geq \rho^{|\Lambda_n|}.    
\end{equation}
\end{prop}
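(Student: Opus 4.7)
The plan is to apply the DLR equation \eqref{DLR} directly with $\Lambda=\Lambda_n$ and $B=C_n(x)$, and then obtain a pointwise lower bound on the specification kernel that is uniform in the boundary condition $y$. Because the cylinder $C_n(x)$ is $\mathfrak{B}_{\Lambda_n}$-measurable, the sum in \eqref{DLR} collapses to a single term, giving
\[
\mu(C_n(x)) \;=\; \int \gamma^\Phi_{\Lambda_n}\bigl(x\vert_{\Lambda_n}\big|\,y\bigr)\,\dd\mu(y),
\]
so it suffices to show $\gamma^\Phi_{\Lambda_n}(x\vert_{\Lambda_n}|y)\ge \rho^{|\Lambda_n|}$ for some $\rho\in(0,1)$ depending only on $\Phi$ and $\cA$.

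The next step is to control the energy $U^\Phi_{\Lambda_n}(x|y)$ in supremum norm. By the triangle inequality,
\[
\bigl|U^\Phi_{\Lambda_n}(x|y)\bigr| \;\le\; \sum_{\substack{\Lambda'\Subset\Z^d\\ \Lambda'\cap\Lambda_n\neq\emptyset}} \|\Phi_{\Lambda'}\|_\infty \;\le\; \sum_{\bi\in\Lambda_n}\sum_{\substack{\Lambda'\Subset\Z^d\\ \Lambda'\ni\bi}}\|\Phi_{\Lambda'}\|_\infty.
\]
Using shift-invariance of $\Phi$, the inner sum equals $\|\Phi\|_{\mathcal{S}}$, so the whole expression is bounded by $|\Lambda_n|\,\|\Phi\|_{\mathcal{S}}$. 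Write $M=\|\Phi\|_{\mathcal{S}}$ for brevity.

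Combining the numerator bound $\e^{U^\Phi_{\Lambda_n}(x|y)}\ge \e^{-|\Lambda_n|M}$ with the partition-function bound
\[
Z^\Phi_{\Lambda_n}(y) \;=\; \sum_{z\in\cA^{\Lambda_n}} \e^{U^\Phi_{\Lambda_n}(z|y)} \;\le\; |\cA|^{|\Lambda_n|}\, \e^{|\Lambda_n|M},
\]
we obtain
\[
\gamma^\Phi_{\Lambda_n}\bigl(x\vert_{\Lambda_n}\big|\,y\bigr) \;\ge\; \bigl(|\cA|\,\e^{2M}\bigr)^{-|\Lambda_n|},
\]
independently of $y$. Setting $\rho := |\cA|^{-1}\e^{-2\|\Phi\|_{\mathcal{S}}}\in(0,1)$ and integrating against $\mu$ yields the desired inequality \eqref{lbGm}. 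There is no serious obstacle here; the only point to verify carefully is the combinatorial bound on $|U^\Phi_{\Lambda_n}(x|y)|$, which rests entirely on the absolute summability condition defining $\mathcal{S}$ together with the (implicit) shift-invariance of $\Phi$.
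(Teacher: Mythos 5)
Your proof is correct and follows essentially the same route as the paper: bound $|U^\Phi_{\Lambda_n}(x|y)|$ by $|\Lambda_n|\,\|\Phi\|_{\mathcal S}$, deduce the uniform lower bound $\gamma^\Phi_{\Lambda_n}(x|y)\ge |\cA|^{-|\Lambda_n|}\e^{-2|\Lambda_n|\|\Phi\|_{\mathcal S}}$, and integrate via the DLR equation, arriving at the same constant $\rho=|\cA|^{-1}\e^{-2\|\Phi\|_{\mathcal S}}$. The only addition is that you spell out the counting argument (using shift-invariance of $\Phi$) behind the energy estimate, which the paper states without proof.
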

\begin{proof}
Since $\Phi\in\mathcal{S}$, we have the bound
\[
\big\vert U^\Phi_\Lambda(x|y)\big\vert \leq |\Lambda| \|\Phi\|_{\mathcal{S}},
\]
which is true for all $x,y\in X$, hence
\[
\e^{-|\Lambda| \|\Phi\|_{\mathcal{S}}}\leq \e^{U^\Phi_\Lambda(x|y)} \leq \e^{|\Lambda| \|\Phi\|_{\mathcal{S}}}\quad\text{and}\quad Z^\Phi_\Lambda(y) \leq |\cA|^{|\Lambda|}
\e^{|\Lambda| \|\Phi\|_{\mathcal{S}}}.
\]
Therefore
\[
\gamma^\Phi_\Lambda(x|y) \geq |\cA|^{-|\Lambda|}\e^{-2|\Lambda| \|\Phi\|_{\mathcal{S}}},
\]
which by \eqref{DLR} yields
\[
\mu(C_n(x)) = \int \gamma^\Phi_{\statbox n}(x|y) \dd\mu(y)\geq 
|\cA|^{-|\statbox n|}\e^{-2|\statbox n| \|\Phi\|_{\mathcal{S}}}
\]
since $\mu(X)=1$. Letting $\rho:=|\cA|^{-1}\e^{-2\|\Phi\|_{\mathcal{S}}}$, this concludes the proof. 
\end{proof}

\bigskip
 
\noindent \textbf{Acknowledgement.}
The authors warmly thank Aernout van Enter for his thoughtful email exchanges. We did our best to draw a connection with statistical mechanics, and our conversations with him played a meaningful role in guiding that process.
AQ and TK acknowledge the financial support from CNRS and the hospitality of the Centre de Physique Th\'eorique (CPHT) at \'Ecole Polytechnique, where part of this work was conducted.


\end{document}